\newcommand{\acli}[1]{\textit{\acl{#1}}}		
\newcommand{\acdef}[1]{\textit{\acl{#1}} \textup{(\acs{#1})}\acused{#1}}		
\colorlet{MyRed}{Crimson!70!Black}
\colorlet{MyBlue}{MediumBlue!90!black}
\colorlet{MyGreen}{DarkGreen!60!black}
\newcommand{\afterhead}{}
\newcommand{\ackperiod}{}		
\newcommand{\para}[1]{\medskip\paragraph{\textbf{#1\afterhead}}}
\crefname{assumption}{Assumption}{Assumptions}
\newcommand{\debug}[1]{#1}		
\theoremstyle{plain}
\newtheorem{theorem}{Theorem}		
\newtheorem{corollary}{Corollary}		
\newtheorem{lemma}{Lemma}		
\newtheorem{proposition}{Proposition}		
\newtheorem*{theorem*}{Theorem}		
\newtheorem*{corollary*}{Corollary}		
\theoremstyle{definition}
\newtheorem{definition}{Definition}		
\newtheorem{assumption}{Assumption}		
\newtheorem*{definition*}{Definition}		
\newtheorem*{assumption*}{Assumptions}		
\theoremstyle{remark}
\newtheorem{remark}{Remark}		
\newtheorem*{remark*}{Remark}		
\newtheorem*{example*}{Example}		
\newcounter{proofpart}
\newcommand{\newmacro}[2]{\newcommand{#1}{\debug{#2}}}		
\DeclarePairedDelimiter{\braces}{\{}{\}}		
\DeclarePairedDelimiter{\bracks}{[}{]}		
\DeclarePairedDelimiter{\parens}{(}{)}		
\DeclarePairedDelimiter{\abs}{\lvert}{\rvert}		
\DeclarePairedDelimiterX{\setdef}[2]{\{}{\}}{#1:#2}		
\DeclarePairedDelimiterXPP{\exclude}[1]{\mathopen{}\setminus}{\{}{\}}{}{#1}
\newcommand{\cf}{cf.\xspace}		
\newcommand{\eg}{e.g.,\xspace}		
\newcommand{\ie}{i.e.,\xspace}		
\newcommand{\vs}{vs.\xspace}		
\newcommand{\textpar}[1]{\textup(#1\textup)}		
\newcommand{\txs}{\textstyle}		
\newcommand{\alt}[1]{#1'}		
\newcommand{\R}{\mathbb{R}}		
\DeclareMathOperator{\bigoh}{\mathcal O}		
\DeclareMathOperator{\Grass}{\mathbf{Gr}}		
\DeclareMathOperator{\Jac}{Jac}		
\newmacro{\coef}{\lambda}		
\newmacro{\dd}{\:d}		
\newmacro{\subs}{\leftarrow}		
\newcommand{\ddt}{\frac{d}{dt}}		
\newcommand{\eps}{\varepsilon}		
\newcommand{\pd}{\partial}		
\DeclareMathOperator*{\intersect}{\bigcap}		
\DeclareMathOperator*{\union}{\bigcup}		
\DeclareMathOperator{\im}{im}		
\DeclareMathOperator{\one}{\mathds{1}}		
\newcommand{\from}{\colon}		
\newcommand{\injects}{\hookrightarrow}		
\newmacro{\points}{\R^{\vdim}}		
\newmacro{\intpoints}{\points^{\circ}}		
\newmacro{\point}{x}		
\newmacro{\pointalt}{\alt\point}		
\newmacro{\dpoints}{\mathcal{Y}}		
\newmacro{\dpoint}{y}		
\newmacro{\dpointalt}{\alt\dpoint}		
\newmacro{\base}{p}		
\newmacro{\basealt}{q}		
\DeclareMathOperator{\dist}{dist}		
\newmacro{\open}{\mathcal{U}}		
\newmacro{\closed}{\mathcal{C}}		
\newmacro{\cpt}{\mathcal{K}}		
\newmacro{\nhd}{\mathcal{U}}		
\newmacro{\start}{1}		
\newmacro{\afterstart}{2}		
\newmacro{\running}{\start,\afterstart,\dotsc}		
\newmacro{\runalt}{k}		
\newmacro{\run}{n}		
\newmacro{\nRuns}{T}		
\newmacro{\runs}{\mathcal{\nRuns}}		
\newmacro{\state}{X}		
\newmacro{\dstate}{Y}		
\newmacro{\aux}{Z}		
\newcommand{\init}[1][\state]{\debug{#1}_{\start}}		
\newcommand{\preiter}[1][\state]{\debug{#1}_{\runalt-1}}		
\newcommand{\iter}[1][\state]{\debug{#1}_{\runalt}}		
\newcommand{\preprev}[1][\state]{\debug{#1}_{\run-2}}		
\newcommand{\prev}[1][\state]{\debug{#1}_{\run-1}}		
\newcommand{\curr}[1][\state]{\debug{#1}_{\run}}		
\renewcommand{\next}[1][\state]{\debug{#1}_{\run+1}}		
\newmacro{\ctime}{t}		
\newmacro{\ctimealt}{s}		
\newmacro{\cstart}{0}		
\newcommand{\orbit}[2][]{\point_{#1}(#2)}		
\newcommand{\dorbit}[2][]{\dot\point_{#1}(#2)}		
\newmacro{\horizon}{T}		
\newmacro{\vecspace}{\points}		
\newmacro{\vdim}{d}		
\newmacro{\vvec}{v}		
\newmacro{\bvec}{e}		
\newmacro{\unitvec}{u}		
\newmacro{\subspace}{\mathcal{W}}		
\newmacro{\wpoint}{y}		
\newmacro{\wvec}{w}		
\newmacro{\thull}{\mathcal{Z}}		
\newmacro{\tvec}{z}		
\DeclarePairedDelimiterX{\braket}[2]{\langle}{\rangle}{#1,#2}		
\newcommand{\dual}[1]{#1^{\ast}}		
\newmacro{\dspace}{\dual\vecspace}		
\newmacro{\dvec}{v}		
\newmacro{\dbvec}{\eps}		
\newmacro{\ones}{\mathbf{1}}		
\newmacro{\mat}{M}		
\newmacro{\eye}{I}		
\newcommand{\mg}{\succ}		
\newcommand{\mleq}{\preccurlyeq}		
\newmacro{\cvx}{\mathcal{C}}		
\newmacro{\subd}{\partial}		
\newmacro{\hmat}{H}		
\newmacro{\strong}{\alpha}		
\newmacro{\smooth}{L}		
\newmacro{\lips}{G}		
\newmacro{\hreg}{h}		
\newmacro{\breg}{D}		
\newmacro{\proxmap}{P}		
\newmacro{\mirror}{Q}		
\newmacro{\fench}{F}		
\newmacro{\hstr}{K}		
\newmacro{\depth}{H}		
\newmacro{\zone}{\mathbb{D}}		
\newmacro{\subpoints}{\points^{\circ}}		
\DeclarePairedDelimiterXPP{\prox}[2]{\proxmap_{#1}}{(}{)}{}{#2}		
\DeclareMathOperator*{\argmin}{arg\,min}		
\DeclareMathOperator{\crit}{crit}		
\newmacro{\obj}{f}		
\newmacro{\objalt}{g}		
\newmacro{\sobj}{F}		
\newmacro{\param}{\theta}		
\newmacro{\params}{\Theta}		
\newmacro{\gvec}{g}		
\newmacro{\vecfield}{v}		
\newmacro{\oper}{A}		
\newmacro{\gbound}{G}		
\newmacro{\vbound}{G}		
\newcommand{\sol}[1][\point]{#1^{\ast}}		
\newmacro{\sols}{\sol[\mathcal{X}]}		
\newmacro{\minmax}{\Phi}		
\newmacro{\minvar}{\point_{1}}		
\newmacro{\minvaralt}{\alt\minvar}		
\newmacro{\minvars}{\points_{1}}		
\newmacro{\maxvar}{\point_{2}}		
\newmacro{\maxvars}{\points_{2}}		
\newmacro{\maxvaralt}{\alt\maxvar}		
\newmacro{\play}{i}		
\newmacro{\playalt}{j}		
\newmacro{\nPlayers}{N}		
\newmacro{\players}{\mathcal{\nPlayers}}		
\newmacro{\pure}{a}		
\newmacro{\purealt}{a'}		
\newmacro{\nPures}{A}		
\newmacro{\pures}{\mathcal{\nPures}}		
\newmacro{\cost}{c}		
\newmacro{\loss}{\ell}		
\newmacro{\pay}{u}		
\newmacro{\payv}{v}		
\newmacro{\pot}{\obj}		
\newmacro{\game}{\mathcal{G}}		
\newmacro{\gamefull}{\game(\players,\points,\pay)}		
\newmacro{\fingame}{\Gamma}		
\newmacro{\fingamefull}{\Gamma(\players,\pures,\pay)}		
\DeclareMathOperator{\ex}{\mathbb{E}}		
\DeclareMathOperator{\prob}{\mathbb{P}}		
\newmacro{\sample}{\omega}		
\newmacro{\samples}{\Omega}		
\newmacro{\filter}{\mathcal{F}}		
\newmacro{\probspace}{(\samples,\filter,\prob)}		
\newmacro{\event}{\samples}		
\newmacro{\eventalt}{E}		
\newcommand{\comp}[1]{#1^{\mathtt{c}}}		
\newmacro{\mean}{\mu}		
\newmacro{\sdev}{\sigma}		
\newmacro{\variance}{\sdev^{2}}		
\newmacro{\dkl}{D_{\mathrm{KL}}}		
\newcommand{\as}{\debug{\textpar{a.s.}}\xspace}		
\providecommand\given{}		
\DeclarePairedDelimiterXPP{\exof}[1]{\ex}{[}{]}{}{
\renewcommand\given{\nonscript\,\delimsize\vert\nonscript\,\mathopen{}} #1}
\DeclarePairedDelimiterXPP{\probof}[1]{\prob}{(}{)}{}{
\renewcommand\given{\nonscript\:\delimsize\vert\nonscript\:\mathopen{}} #1}
\newcommand{\oneof}[1]{\one_{\braces*{#1}}}
\newmacro{\step}{\gamma}		
\newmacro{\temp}{\eta}		
\newmacro{\proper}{\tau}		
\newcommand{\apt}[2][]{\state_{#1}(#2)}		
\newmacro{\orcl}{\mathsf{V}}		
\newmacro{\signal}{V}		
\newmacro{\error}{Z}		
\newmacro{\noise}{Z}		
\newmacro{\bias}{b}		
\newmacro{\brown}{W}		
\newmacro{\snoise}{\xi}		
\newmacro{\sbias}{\psi}		
\newmacro{\scorr}{\theta}		
\newmacro{\sbound}{M}		
\newmacro{\bbound}{B}		
\newmacro{\noisepar}{\sdev}		
\newmacro{\noisevar}{\variance}		
\newmacro{\mix}{\delta}		
\newmacro{\pexp}{p}		
\newmacro{\qexp}{q}		
\newmacro{\rexp}{r}		
\newmacro{\pert}{w}		
\newmacro{\pertvar}{W}		
\newmacro{\unitvar}{\unitvec}		
\newcommand{\olim}[1][\point]{\hat#1}		
\newmacro{\flowmap}{\Phi}		
\newcommand{\flow}[2]{\flowmap_{#1}(#2)}		
\newmacro{\genmap}{\Psi}		
\newcommand{\gen}[2]{\genmap_{#1}(#2)}		
\newcommand{\saddle}{\sol}		
\newmacro{\saddles}{\mathcal{S}}		
\newmacro{\graph}{\mathcal{G}}
\newmacro{\vertices}{\mathcal{V}}
\newmacro{\edges}{\mathcal{E}}
\DeclarePairedDelimiterX{\product}[2]{\langle}{\rangle}{#1,#2}		
\DeclarePairedDelimiter{\norm}{\lVert}{\rVert}		
\newcommand{\dnorm}[1]{\norm{#1}}		
\newmacro{\gmat}{g}		
\newmacro{\gdist}{\dist_{\gmat}}
\newmacro{\ball}{\mathbb{B}}		
\newmacro{\sphere}{\mathbb{S}}		
\newmacro{\radius}{r}		
\newmacro{\mfld}{\mathcal{M}}		
\newcommand{\tangent}[2]{\debug{T}_{#1}{#2}}		
\newmacro{\const}{c}
\newmacro{\offset}{m}
\newmacro{\pcoef}{P}		
\newmacro{\rcoef}{R}		
\newmacro{\qmin}{\alpha}		
\newmacro{\qmaj}{\beta}		
\newmacro{\rconst}{R_{\ast}}
\newmacro{\flvl}{L}
\newmacro{\glvl}{M}
\newmacro{\critval}{a}
\newmacro{\critnhd}{K}
\newmacro{\prelyap}{V}		
\newmacro{\lyap}{E}		
\newcommand{\zerdirs}[1]{\debug{\mathcal{E}}_{#1}^{c}}		
\newcommand{\posdirs}[1]{\debug{\mathcal{E}}_{#1}^{s}}		
\newcommand{\negdirs}[1]{\debug{\mathcal{E}}_{#1}^{u}}		
\begin{document}


\newcommand{\longtitle}{\uppercase{On the Almost Sure Convergence of\\
Stochastic Gradient Descent in Non-Convex Problems}}
\newcommand{\runtitle}{\uppercase{Stochastic Gradient Descent in Non-Convex Problems}}

\title
[\runtitle]
{\longtitle}		

\author
[P.~Mertikopoulos]
{Panayotis Mertikopoulos$^{\ast,\diamond,\lowercase{c}}$}
\email{panayotis.mertikopoulos@imag.fr}

\author
[N.~Hallak]
{Nadav Hallak$^{\sharp}$}
\email{nadav.hallak@epfl.ch}

\author
[A.~Kavis]
{Ali Kavis$^{\sharp}$}
\email{ali.kavis@epfl.ch}

\author
[V.~Cevher]
{Volkan Cevher$^{\ast}$}
\email{put.email@here}

\address{$^{\ast}$\,%
Univ. Grenoble Alpes, CNRS, Inria, LIG, 38000, Grenoble, France.}
\address{$^{\diamond}$\,%
Criteo AI Lab.}
\address{$^{\sharp}$\,%
École Polytechnique Fédérale de Lausanne (EPFL).}
\address{$^{c}$\,Corresponding author.}

\subjclass[2020]{%
Primary 90C26, 62L20;
secondary 90C30, 90C15, 37N40.}

\keywords{%
Non-convex optimization;
stochastic gradient descent;
stochastic approximation.
}

\thanks{
%
%
This research was partially supported by the COST Action CA16228 ``European Network for Game Theory'' (GAMENET). P.~Mertikopoulos is also grateful for financial support by
the French National Research Agency (ANR) under grant no.~ANR\textendash 16\textendash CE33\textendash 0004\textendash 01 (ORACLESS).
N.~Hallak and A.~Kavis were supported by the European Research Council (ERC) under the European Union's Horizon 2020 research and innovation programme (grant agreement no 725594 - time-data).
V.~Cevher gratefully acknowledges the support of the Swiss National Science Foundation (SNSF) under grant \textnumero\ 200021\textendash 178865 / 1, the European Research Council (ERC) under the European Union's Horizon 2020 research and innovation programme (grant agreement \textnumero\ 725594 - time-data), and 2019 Google Faculty Research Award\ackperiod}

\newacro{LHS}{left-hand side}
\newacro{RHS}{right-hand side}
\newacro{iid}[i.i.d.]{independent and identically distributed}
\newacro{lsc}[l.s.c.]{lower semi-continuous}

\newacro{APT}{asymptotic pseudotrajectory}
\newacroplural{APT}[APTs]{asymptotic pseudotrajectories}
\newacro{GD}{gradient dynamics}
\newacro{GF}{gradient flow}
\newacro{MDS}{martingale difference sequence}
\newacro{NHIM}{normally hyperbolic invariant manifold}
\newacro{NHSM}{normally hyperbolic saddle manifold}
\newacro{NHM}{normally hyperbolic manifold}
\newacro{ODE}{ordinary differential equation}
\newacro{RM}{Robbins\textendash Monro}
\newacro{SA}{stochastic approximation}
\newacro{SG}{stochastic gradient}
\newacro{SGD}{stochastic gradient descent}
\newacro{SFO}{stochastic first-order oracle}

\begin{abstract}
%
%
This paper analyzes the trajectories of \ac{SGD} to help understand the algorithm's convergence properties in non-convex problems.
We first show that the sequence of iterates generated by \ac{SGD} remains bounded and converges with probability $1$ under a very broad range of step-size schedules.
Subsequently, going beyond existing positive probability guarantees, we show that \ac{SGD} avoids strict saddle points/manifolds with probability $1$ for the entire spectrum of step-size policies considered.
Finally, we prove that the algorithm's rate of convergence to Hurwicz minimizers is $\bigoh(1/\run^{\pexp})$ if the method is employed with a $\Theta(1/\run^{\pexp})$ step-size.
This provides an important guideline for tuning the algorithm's step-size as it suggests that a cool-down phase with a vanishing step-size could lead to faster convergence;
we demonstrate this heuristic using ResNet architectures on CIFAR.

\end{abstract}

\maketitle
\acresetall

\section{Introduction}
\label{sec:introduction}

Owing to its simplicity and empirical successes, \ac{SGD} has become the de facto method for training a wide range of models in machine learning.
This paper examines the properties of \ac{SGD} in non-convex problems with the aim of answering the following questions: 
\begin{enumerate}
[(Q1)]
\item
Does \ac{SGD} \emph{always} converge?
\item
Does \ac{SGD} \emph{always} avoid spurious critical regions, such as non-isolated saddle points, etc.?
\item
How fast does \ac{SGD} converge to local minima as a function of the method's step-size policy?
\end{enumerate}
\smallskip

We provide the following precise answers to these questions:

\para{On (Q1):}
Under mild conditions for the function to be optimized, and allowing for a wide range of step-size schedules of the form $\Theta(1/\run^{\pexp})$ for $\pexp\in(0,1]$, the iterate sequence $\curr$ of \ac{SGD} converges with probability $1$.
In contrast to existing mean squared error guarantees of the form $\exof{\norm{\nabla\obj(\curr)}^{2}} \to 0$ (where $\obj$ is the problem's objective), this is a stronger, trajectory convergence result:
It is not a guarantee that holds on average, but a convergence certificate that applies with probability $1$ to \emph{any} instantiation of the algorithm.

\para{On (Q2):}
With probablity $1$, the trajectories of \ac{SGD} avoid all strict saddle manifolds \textendash\ \ie sets of critical points $\saddle$ with at least one negative Hessian eigenvalue ($\lambda_{\min}(\nabla^{2}\obj(\saddle)) < 0$).
Such manifolds include ridge hypersurfaces and other connected sets of non-isolated saddle points that are common in the loss landscapes of overparametrized neural networks \citep{LXTS+18}.
In this way, our result complements and extends a series of saddle avoidance results for \emph{deterministic} gradient descent \citep{LSJR16,LPPS+19,FVP19b,DJLJ+17,PPW19}, and with \emph{high probability} \citep{GHJY15} or \emph{in expectation} \citep{VS19} for \acl{SGD}.

\para{On (Q3):}
If \ac{SGD} is run with a step-size schedule of the form $\curr[\step] = \Theta(1/\run^{\pexp})$ for some $\pexp\in(0,1]$, it converges at a rate of $\exof{\norm{\curr - \sol}^{2}} = \bigoh(1/\run^{\pexp})$ to local minimizers that are regular in the sense of Hurwicz (\ie $\nabla^{2}\obj(\sol) \mg 0$).
We stress here that this is a ``last iterate'' convergence guarantee;
neither ergodic, nor of a mean-squared gradient norm type.
This is crucial for real-world applications because, in practice, \ac{SGD} training is based on the last generated point.

Taken together, the above suggests that a vanishing step-size policy has significant theoretical benefits:
almost sure convergence,
avoidance of spurious critical points (again with probability $1$),
and
fast stabilization to local minimizers.
We explore these properties in a range of standard non-convex test functions and by training a ResNet architecture for a classification task over CIFAR.

The linchpin of our approach is the \emph{\acs{ODE} method} of \acl{SA} as pioneered by \citet{Lju77,BMP90,KY97}, and \citet{Ben99}.
As such, our analysis combines a wide range of techniques from the theory of dynamical systems along with a series of martingale limit theory tools originally developed by \citet{Pem90} and \citet{BD96}.

\para{Related work}

Ever since the seminal paper of \citet{RM51}, \ac{SGD} has given rise to a vast corpus of literature that we cannot hope to do justice here.
We discuss below only those works which \textendash\ to the best of our knowledge \textendash\ are the most relevant to the contributions outlined above.

The first result on the convergence of \ac{SGD} trajectories is due to \citet{Lju77,Lju86}, who proved the method's convergence under the boundedness assumption $\sup_{\run} \norm{\curr} < \infty$.
Albeit intuitive, this assumption is fairly difficult to establish from first principles and the problem's primitives.
Because of this, boundedness has persisted in the \acl{SA} literature as a condition that needs to be enforced ``by hand'', see \eg \citet{Ben99,Bor08,KY97}, and references therein.
To rid ourselves of this condition, we resort to a series of shadowing arguments that interpolate between continuous and discrete time.
Our results also improve on a more recent result by \citet{BT00} who use a completely different analysis to dispense of boundedness via the use of more restrictive, rapidly decaying step-size policies.

On the issue of saddle-point avoidance, \citet{Pem90} and \citet{BD96} showed that \ac{SGD} avoids \emph{hyperbolic} saddle points ($\lambda_{\min}(\nabla^{2}\obj(\saddle)) < 0$, $\det \nabla^{2}\obj(\saddle) \neq 0$) with probability $1$.
More recently, and under different assumptions, \citet{GHJY15} showed that \ac{SGD} avoids \emph{strict} saddle points ($\lambda_{\min}(\nabla^{2}\obj(\saddle)) < 0$) with high probability,
whereas the work of \citet{VS19} guarantees escape from strict saddles in expectation.
By comparison, our paper shows that strict saddles are avoided \emph{with probability $1$}, thus providing the missing link between these two threads;
for completenes, we review these results in detail in \cref{sec:avoidance}.

The papers mentioned above should be disjoined from an extensive literature on saddle-point avoidance results for \emph{deterministic} gradient descent \citep{LSJR16,JGNK+17,LPPS+19,DJLJ+17,FVP19b,PPW19,PP17}.
Given that these works focus exclusively on deterministic methods, they have no bearing on our work here.

Finally, regarding the rate of convergence of \ac{SGD} in non-convex problems, \citet{GL13,GL16} established a series of bounds of the form $\exof{ \norm{\nabla\obj(X_R)}^{2}} = \bigoh(1/\sqrt{\nRuns})$, where $R$ is drawn randomly from the running horizon $\{1,\dotsc,\nRuns\}$ of the process.
More recently, \citet{LHLT19} provided a non-asymptotic rate analysis for $\alpha$-Holder smooth functions, without a bounded gradient assumption;
specifically, \citet{LHLT19} proved that, for some $T$, $\min_{\run\leq\nRuns} \exof{ \norm{ \nabla \obj (\curr) }^2 } = \bigoh(\nRuns^{\pexp-1})$ with stepsize $\curr[\step] = \step / \run^{\pexp}$ and $\pexp \in (1 / (1 + \alpha), 1)$.
There is no overlap of our results or analysis with these works, and we are not aware of convergence guarantees similar to our own in the literature.

\para{Notation}

In the rest of our paper, $\vecspace$ denotes a $\vdim$-dimensional Euclidean space.
We also write
$\braket{\cdot\,}{\cdot}$ for the inner product on $\vecspace$,
$\norm{\cdot}$ for the induced norm,
and
$\sphere^{\vdim-1} = \setdef{\point \in \points}{\norm{\point} = 1}$ for the unit hypersphere of $\points$.
Since the space is Euclidean, we make no distinction between primal and dual vectors (or norms).

\section{Problem setup and assumptions}
\label{sec:setup}

\subsection{Problem setup}
\label{sec:assumptions}

Throughout the sequel, we focus on the non-convex optimization problem
\begin{equation}
\label{eq:opt}
\tag{Opt}
\begin{aligned}
\textrm{minimize}_{x\in\points}
	& \
	\obj(\point),
\end{aligned}
\end{equation}
where $\obj\from\points\to\R$ is a $\vdim$-times differentiable function satisfying the following blanket assumptions.

\begin{assumption}
\label{asm:reg}
$\obj$ is \emph{$\lips$-Lipschitz and $\smooth$-smooth}, \ie
\begin{equation}
\dnorm{\nabla\obj(\point)}
	\leq \lips
	\quad
	\textrm{and}
	\;\;
\dnorm{\nabla\obj(\pointalt) - \nabla\obj(\point)}
	\leq \smooth \norm{\pointalt - \point}
	\;\;
	\text{for all $\point,\pointalt\in\points$}.
\end{equation}
\end{assumption}

%

\begin{assumption}
\label{asm:sublvl}
The \emph{sublevels}
\(
\flvl_{c}
	\equiv \setdef{\point\in\points}{\obj(\point) \leq c}
\)
of $\obj$ are bounded for all $c < \sup\obj$.
\end{assumption}

\begin{assumption}
\label{asm:gradlvl}
The \emph{gradient sublevels}
\(
\glvl_{\eps}
	\equiv \setdef{\point\in\points}{\dnorm{\nabla\obj(\point)} \leq \eps}
\)
of $\obj$
are bounded for some $\eps>0$.
\end{assumption}

\Cref{asm:reg,asm:sublvl,asm:gradlvl} are fairly standard in non-convex analysis and optimization.
Taken individually,
\cref{asm:reg} is a basic
regularity requirement for $\obj$;
\cref{asm:sublvl} guarantees the existence of solutions to \eqref{eq:opt} by ruling out vacuous cases like $\obj(\point) = -\point$;
and, finally,
\cref{asm:gradlvl} serves to exclude objectives with near-critical behavior at infinity such as  $\obj(\point) = -e^{-\point^{2}}$.%
\footnote{Note that \cref{asm:gradlvl} only concerns near-critical points, not regions where $\dnorm{\nabla\obj(\point)}$ may be large.}
Taken together, \cref{asm:reg,asm:sublvl,asm:gradlvl} further imply that the critical set
\begin{equation}
\label{eq:crit}
\sols
	\equiv
	\crit(\obj)
	= \setdef{\point\in\points}{\nabla\obj(\point) = 0}
\end{equation}
of $\obj$ is nonempty, a fact that we use freely in the sequel.

Typical examples of \eqref{eq:opt} in machine learning comprise
neural networks with sigmoid activation functions,
underdetermined inverse problems,
empirical risk minimization models,
etc.
In such problems, obtaining accurate gradient input is impractical, so to solve \eqref{eq:opt}, we often rely on \acli{SG} information, obtained for example by taking a mini-batch of training instances.

\subsection{Assumptions on the oracle}
\label{sec:oracle}

With this in mind, we will assume throughout that the optimizer can access $\nabla\obj$ via a \acdef{SFO}.
Formally, this is a black-box feedback mechanism which, when queried at an input point $\point\in\points$, returns a random vector $\orcl(\point;\sample)$ with $\sample$ drawn from some (complete) probability space $\probspace$.
In more detail, decomposing the oracle's output at $\point$ as
\begin{equation}
\label{eq:SFO}
\tag{SFO}
\orcl(\point;\sample)
	= \nabla\obj(\point)
	+ \noise(\point;\sample)
	\,,
\end{equation}
we  make the following assumption.
\begin{assumption}
\label{asm:noise}
The \emph{error term} $\noise(\point;\sample)$ of \eqref{eq:SFO} has
\begin{subequations}
\label{eq:noise}
\begin{alignat}{2}
\label{eq:zeromean}
\textpar{\textit a}
\quad
	&\emph{Zero mean:}
	&\quad
	&\exof{\noise(\point;\sample)}
		= 0
	\\
\textpar{\textit b}
\quad
\label{eq:moment}
	&\emph{Finite $\qexp$-th moments:}
	&\quad
	&\exof{\dnorm{\noise(\point;\sample)}^{\qexp}}
		\leq \noisepar^{\qexp}
	\;\
	\text{for some $\qexp\geq2$ and $\noisepar\geq 0$}.
	\hspace{2.5em}
\end{alignat}
\end{subequations}
\end{assumption}

\Cref{asm:noise} is standard in stochastic optimization and is usually stated with $\qexp = 2$, \ie as a ``finite variance'' condition, \cf \citet{Nes04,Pol87,JNT11,Ben99}, and many others.
Allowing values of $\qexp$ greater than $2$ provides more flexibility in the choice of step-size policies, so we  keep \eqref{eq:moment} as a blanket assumption throughout.
We  also formally allow the value $\qexp=\infty$ in \eqref{eq:moment}, in which case we will say that the noise is \emph{bounded in $L^{\infty}$};
put simply, this corresponds to the standard assumption that the noise in \eqref{eq:SFO} is bounded almost surely.


\subsection{\Acl{SGD}}
\label{sec:SGD}

With all this in hand, the \acf{SGD} algorithm can be written as
\begin{equation}
\label{eq:SGD}
\tag{SGD}
\next
	= \curr
	- \curr[\step] \curr[\signal].
\end{equation}
In the above,
$\run = \running$ is the algorithm's iteration counter,
$\curr[\step]$ is the algorithm's step-size,
and
$\curr[\signal]$ is a sequence of gradient signals of the form
\begin{equation}
\label{eq:signal}
\curr[\signal]
	= \orcl(\curr;\curr[\sample])
	= \nabla\obj(\curr)
	+ \curr[\noise].
\end{equation}
Each gradient signal $\curr[\signal]$ is generated by querying the oracle at $\curr$ with some random seed $\curr[\sample]$.
For concision, we write $\curr[\noise] \equiv \noise(\curr,\curr[\sample])$ for the gradient error at the $\run$-th iteration
and
$\curr[\filter] = \sigma(\state_{\start},\dotsc,\curr)$ for the natural filtration of $\curr$;
in this notation, $\curr[\sample]$ and $\curr[\signal]$ are \emph{not} $\curr[\filter]$-measurable.

All our results for \eqref{eq:SGD} are stated in the framework of the basic assumptions above.
The price to pay for this degree of generality is that the analysis requires an intricate interplay between martingale limit theory and the theory of \acl{SA};
we review the relevant notions below.

\section{Stochastic approximation}
\label{sec:SA}

\para{Asymptotic pseudotrajectories}

The departure point for our analysis is to rewrite the iterates of \eqref{eq:SGD} as
\(
\left(\next - \curr\right) / \curr[\step]
	= \nabla(\obj(\curr)) + \curr[\noise].
\)
In this way, \eqref{eq:SGD} can be seen as a \acl{RM} discretization of the continuous-time \acli{GD}
\begin{equation}
\label{eq:GD}
\tag{GD}
\dorbit{\ctime}
	= - \nabla\obj(\orbit{\ctime}).
\end{equation}
The main motivation for this comparison is that $\obj$ is a strict Lyapunov function for \eqref{eq:GD}, indicating that its solution orbits converge to the critical set $\sols$ of $\obj$ (see the supplement for a formal statement and proof of this fact).
As such, if the trajectories of \eqref{eq:SGD} are ``good enough'' approximations of the solutions of \eqref{eq:GD}, one would expect \eqref{eq:SGD} to enjoy similar convergence properties.

To make this idea precise, we first connect continuous and discrete time by letting $\curr[\proper] = \sum_{\runalt=\start}^{\run} \iter[\step]$ denote the time that has ``elapsed'' for \eqref{eq:SGD} up to iteration counter $\run$ (inclusive); that is,  a step-size in discrete time is translated to elapsed time in the continuous case, and vice-versa.
We may then define the continuous-time interpolation of an iterate sequence $\curr$ of \eqref{eq:SGD} as
\begin{equation}
\label{eq:interpolation}
\apt{\ctime}
	= \curr
		+ \left[ (\ctime - \curr[\proper]) / (\next[\proper] - \curr[\proper]) \right] (\next - \curr)
		\qquad
		\text{for all $\ctime \in [\curr[\proper],\next[\proper]]$}.
\end{equation}
To compare this trajectory to the solutions of \eqref{eq:GD}, we further need to define the ``flow'' of \eqref{eq:GD} which describes how an ensemble of initial conditions evolves over time.
Formally, we let $\flowmap\from\R_{+}\times\points\to\points$ denote the map which sends an initial $\point\in\points$ to the point $\flow{\ctime}{\point}\in\points$ by following for time $\ctime\in\R_{+}$ the solution of \eqref{eq:GD} starting at $\point$.
We then have the following notion of ``asymptotic closeness'' between a sequence generated by \eqref{eq:SGD} and the flow of the dynamics \eqref{eq:GD}:

\begin{definition}
\label{def:APT}
We say that $\apt{\ctime}$ is an \acdef{APT} of \eqref{eq:GD} if, for all $\horizon > 0$:
\begin{equation}
\label{eq:APT}
\txs
\lim_{\ctime\to\infty}
	\sup_{0 \leq h \leq \horizon}
		\norm{\apt{\ctime + h} - \flow{h}{\apt{\ctime}}}
	= 0,
\end{equation}
\end{definition}

The notion of an \ac{APT} is due to \citet{BH96} and essentially posits that $\apt{\ctime}$ tracks the flow of \eqref{eq:GD} with arbitrary accuracy over windows of arbitrary length as $\ctime\to\infty$.
When this is the case, we will slightly abuse terminology and say that the sequence $\curr$ itself comprises an \ac{APT} of \eqref{eq:GD}.

With all this in hand, the formal link between \eqref{eq:GD} and \eqref{eq:SGD} is as follows:

\begin{restatable}{proposition}{propAPT}
\label{prop:APT}
Suppose that \cref{asm:reg,asm:noise} hold
and
\eqref{eq:SGD} is employed with a step-size sequence such that $\sum_{\run=\start}^{\infty} \curr[\step] = \infty$ and $\sum_{\run=\start}^{\infty} \curr[\step]^{1+\qexp/2} < \infty$ with $\qexp\geq2$ as in \cref{asm:noise}.
Then, with probability $1$, $\curr$ is an \ac{APT} of \eqref{eq:GD}.
\end{restatable}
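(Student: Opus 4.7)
The plan is to invoke the Benaïm--Hirsch \ac{ODE} criterion for \aclp{APT}, which reduces the claim to a quantitative martingale bound that exploits the higher-moment hypothesis in \cref{asm:noise} via a weighted Jensen inequality.

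First, \cref{asm:reg} makes $\nabla\obj$ both $\smooth$-Lipschitz and bounded by $\lips$, so \eqref{eq:GD} is globally well-posed and the flow $\flowmap\from\R_{+}\times\points\to\points$ is continuous. The divergence condition $\sum_{\run}\curr[\step] = \infty$ guarantees $\curr[\proper]\to\infty$, so $\apt{\ctime}$ is defined on all of $\R_{+}$. Rewriting \eqref{eq:SGD} as $\next - \curr = -\curr[\step]\nabla\obj(\curr) - \curr[\step]\curr[\noise]$, Proposition 4.1 of \citet{Ben99} applies as soon as, for every horizon $\horizon > 0$,
\begin{equation*}
\lim_{\run\to\infty}
\sup\nolimits_{\run \le k \le m(\run,\horizon)}
\Big\| \sum\nolimits_{j=\run}^{k-1} \step_j \noise_j \Big\|
= 0
\quad
\textup{a.s.,}
\end{equation*}
where $m(\run,\horizon) = \max\{k\ge\run : \iter[\proper] - \curr[\proper] \le \horizon\}$. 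The deterministic Euler-discretization error on such a window is $O(\curr[\step]\horizon e^{\smooth\horizon})$ by $\smooth$-smoothness and the $\lips$-bound, via a standard Gronwall estimate; since $\curr[\step]\to 0$ (which follows from $\sum\curr[\step]^{1+\qexp/2}<\infty$), this vanishes as $\run\to\infty$.

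For the noise, \eqref{eq:zeromean} makes $(\curr[\noise])$ a martingale difference sequence relative to the filtration of the oracle queries, so $M_k = \sum_{j=\start}^{k} \step_j \noise_j$ is an $L^{\qexp}$-martingale with $\exof{\norm{\noise_j}^{\qexp}} \le \noisepar^{\qexp}$ by \eqref{eq:moment}. I would combine the Burkholder--Davis--Gundy inequality with a Jensen step that uses the $\step_j$ themselves as weights (valid since $\qexp/2 \ge 1$ and $\sum_{j=\run}^{m(\run,\horizon)-1} \step_j \le \horizon + \curr[\step]$), yielding
\begin{align*}
\exof[\big]{\sup\nolimits_{\run \le k \le m(\run,\horizon)} \norm{M_k - M_{\run}}^{\qexp}}
&\le C_{\qexp}\, \exof[\big]{\Big(\sum\nolimits_{j=\run}^{m(\run,\horizon)-1} \step_j^2 \norm{\noise_j}^2\Big)^{\qexp/2}} \\
&\le C_{\qexp}\,(\horizon+\curr[\step])^{\qexp/2-1}\,\noisepar^{\qexp}\,
\sum\nolimits_{j\ge\run} \step_j^{1+\qexp/2}.
\end{align*}
The right-hand side is the tail of a convergent series by hypothesis and therefore tends to $0$.

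Finally, I would lift this $L^{\qexp}$-decay to an almost-sure statement by Markov's inequality along a sparsified subsequence $\run_k$ chosen so that the resulting bounds are summable, followed by Borel--Cantelli and monotonicity of the supremum in $k$ to extend to the full index. The main technical obstacle beyond the standard \ac{APT} machinery is this weighted-Jensen step: a naive Doob or plain BDG estimate would leave a $\sum\step_j^{2}$ tail, strictly stronger than the hypothesis $\sum\step_j^{1+\qexp/2}<\infty$; pairing the $\qexp$-th moment of $\noise_j$ against the factor $\step_j$ is precisely what buys the stated trade-off between noise integrability and step-size summability.
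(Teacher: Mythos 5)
Your proposal is correct and follows essentially the same route as the paper, which simply invokes Propositions 4.1 and 4.2 of \citet{Ben99} without reproducing their proofs: your Kushner--Clark-type reduction, the Gronwall estimate, and the BDG-plus-weighted-Jensen bound yielding the $\sum_{\run}\curr[\step]^{1+\qexp/2}$ tail are precisely the content of those two cited propositions. The only difference is that you reconstruct the argument explicitly where the paper defers to the reference.
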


\begin{corollary}
\label{cor:APT-step}
Suppose that \eqref{eq:SGD} is run with $\curr[\step] = \Theta(1/\run^{\pexp})$ for some $\pexp \in (2/(\qexp + 2),1]$ and assumptions as in \cref{prop:APT}.
Then, with probability $1$, $\curr$ is an \ac{APT} of \eqref{eq:GD}.
\end{corollary}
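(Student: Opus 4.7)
The plan is to derive the corollary as a direct specialization of \cref{prop:APT}: the only work is to verify that a polynomial step-size schedule $\curr[\step] = \Theta(1/\run^{\pexp})$ with $\pexp \in (2/(\qexp+2),1]$ satisfies the two summability hypotheses $\sum_{\run} \curr[\step] = \infty$ and $\sum_{\run} \curr[\step]^{1+\qexp/2} < \infty$. Once these are in place, \cref{prop:APT} immediately yields that $\curr$ is an \ac{APT} of \eqref{eq:GD} with probability $1$.

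First I would check the divergence condition. Since $\curr[\step] = \Theta(1/\run^{\pexp})$, there is a constant $c > 0$ such that $\curr[\step] \geq c/\run^{\pexp}$ for all sufficiently large $\run$. The series $\sum_{\run} 1/\run^{\pexp}$ diverges whenever $\pexp \leq 1$, which is guaranteed by the right endpoint of the assumed interval. Hence $\sum_{\run} \curr[\step] = \infty$.

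Next I would check the summability of $\curr[\step]^{1+\qexp/2}$. Here $\curr[\step]^{1+\qexp/2} = \Theta(1/\run^{\pexp(1+\qexp/2)})$, so the series converges if and only if $\pexp(1+\qexp/2) > 1$, that is $\pexp > 2/(\qexp+2)$. This is precisely the left endpoint of the assumed range, so the condition holds. (In the degenerate case $\qexp = \infty$, the threshold $2/(\qexp+2)$ is understood as $0$ and the condition is automatically satisfied for any $\pexp > 0$.)

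There is no real obstacle here: the proof is a one-line verification that the tail conditions of \cref{prop:APT} are met. The delicate work \textendash\ controlling the martingale error accumulated by the \ac{SFO} noise and showing that the interpolation \eqref{eq:interpolation} shadows the flow of \eqref{eq:GD} uniformly on compact windows \textendash\ is already encapsulated in \cref{prop:APT}. Applying that proposition then concludes the proof.
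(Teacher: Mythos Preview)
Your proposal is correct and matches the paper's approach: the corollary is stated without proof because it is an immediate specialization of \cref{prop:APT}, obtained exactly by the $p$-series check you give (divergence for $\pexp\leq 1$ and summability for $\pexp(1+\qexp/2)>1$).
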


This comparison result plays a key role in the sequel because it delineates the range of step-size policies under which the discrete-time system \eqref{eq:SGD} is well-approximated by the continuous-time dynamics \eqref{eq:GD}.
We discuss this issue in detail in the next section.

\section{Convergence analysis}
\label{sec:analysis}

Heuristically, the goal of approximating \eqref{eq:SGD} via \eqref{eq:GD} is to reduce the difficulty of the direct analysis of the former by leveraging the strong convergence properties of the latter.
of the latter, which is relatively straightforward to analyze, to the former, which is much more difficult.
However, the notion of an \ac{APT} does not suffice in this regard:
in the supplement, we provide an example where a discrete-time \ac{APT} has a completely different behavior relative to the underlying flow.
As such, a considerable part of our analysis below focuses on tightening the guarantees provided by the \ac{APT} approximation scheme.

\subsection{Boundedness and stability of the approximation}
\label{sec:stability}

The basic point of failure in the \acl{SA} approach is that \acp{APT} may escape to infinity, rendering the whole scheme useless, \cf \citep{Ben99,Bor08} and references therein.
It is for this reason that a large part of the literature on \ac{SGD} explicitly assumes that the trajectories of the process are bounded (precompact), \ie
\begin{equation}
\label{eq:stability}
\txs
\sup_{\ctime\geq\cstart} \norm{\apt{\ctime}}
	< \infty
	\quad
	\as.
\end{equation}
However, this is a prohibitively strong assumption for \eqref{eq:opt}:
unless certified ahead of time, any theoretical result relying on this assumption would be of limited practical value.

Our first result below provides exactly this certification by establishing that \eqref{eq:stability} is solely an implication of our underlying \crefrange{asm:reg}{asm:noise}.
It is a non-trivial outcome which provides the key to unlocking the potential of \acl{SA} techniques in the sequel.

\begin{restatable}{theorem}{stability}
\label{thm:stability}
Suppose that \crefrange{asm:reg}{asm:noise} hold
and
\eqref{eq:SGD} is run with a variable step-size sequence
of the form $\curr[\step]\propto 1/\run^{\pexp}$ for some $\pexp \in (2/(\qexp + 2),1]$.
Then, with probability $1$, every \ac{APT} $\apt{\ctime}$ of \eqref{eq:GD} that is induced by \eqref{eq:SGD} has $\sup_{\ctime\geq\cstart} \norm{\apt{\ctime}} < \infty$.
\end{restatable}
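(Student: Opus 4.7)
The plan is to show that the discrete iterates $\curr$ of \eqref{eq:SGD} are almost surely bounded, and then to lift this to $\apt{\ctime}$ via the piecewise-linear interpolation \eqref{eq:interpolation}. By \cref{asm:sublvl} the sublevels of $\obj$ are bounded, so an almost-sure upper bound on $\obj(\curr)$ automatically yields one on $\dnorm{\curr}$; the whole task therefore reduces to controlling the energy $\obj(\curr)$ along the trajectory.

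First I would derive, from $\smooth$-smoothness (\cref{asm:reg}) together with the decomposition $\curr[\signal] = \nabla\obj(\curr) + \curr[\noise]$, the one-step descent
\begin{equation*}
\obj(\next) \leq \obj(\curr) - \curr[\step]\dnorm{\nabla\obj(\curr)}^{2} - \curr[\step]\braket{\nabla\obj(\curr)}{\curr[\noise]} + \tfrac{\smooth \curr[\step]^{2}}{2}\dnorm{\curr[\signal]}^{2}.
\end{equation*}
Telescoping from $\runalt = \start$ to $N$ separates the deterministic descent $-\sum_{\runalt}\iter[\step]\dnorm{\nabla\obj(\iter)}^{2}$ from two stochastic pieces: a martingale $M_{N} = \sum_{\runalt\leq N}\iter[\step]\braket{\nabla\obj(\iter)}{\iter[\noise]}$, and a non-negative residual $R_{N} = (\smooth/2)\sum_{\runalt\leq N}\iter[\step]^{2}\dnorm{\iter[\signal]}^{2}$.

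Next I would argue that both $\sup_{N}\abs{M_{N}}$ and $\sup_{N} R_{N}$ are almost surely finite. The martingale increments are dominated pointwise by $\iter[\step]\lips\dnorm{\iter[\noise]}$ (using $\dnorm{\nabla\obj}\leq\lips$ from \cref{asm:reg}), and combined with \cref{asm:noise} and the summability $\sum\curr[\step]^{1+\qexp/2}<\infty$ this yields, via Burkholder\textendash Davis\textendash Gundy together with Minkowski's inequality in $L^{\qexp/2}$, a uniform bound $\sup_{N}\exof{\abs{M_{N}}^{\qexp}}<\infty$, whence $M_{N}$ converges almost surely; an analogous estimate bounds $\exof{R_{N}^{\qexp/2}}$ and gives convergence of $R_{N}$. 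In the easier regime $\sum\curr[\step]^{2}<\infty$ (which covers $\pexp>1/2$), one may instead apply the Robbins\textendash Siegmund almost-supermartingale theorem directly to $\obj(\curr)$ and read off the same conclusion.

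Combining these ingredients gives $\sup_{\run}\obj(\curr)<\infty$ almost surely; \cref{asm:sublvl} upgrades this to $\sup_{\run}\dnorm{\curr}<\infty$, and since $\apt{\ctime}$ is the piecewise-linear interpolation of $(\curr)$ between the nodes $\curr[\proper]$, we conclude $\sup_{\ctime\geq\cstart}\dnorm{\apt{\ctime}}<\infty$ almost surely. The hardest step will be the $L^{\qexp}$ martingale control when $\pexp\in(2/(\qexp+2),\,1/2]$, which the hypothesis permits whenever $\qexp>2$: there $\sum\curr[\step]^{2}$ diverges and the naive second-moment arguments break, so one must genuinely exploit the sharper $\qexp$-th moment bound in \cref{asm:noise} rather than reduce to $L^{2}$. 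Should the direct $L^{\qexp}$ route prove delicate, a fallback is an excursion/stopping-time argument using \cref{asm:gradlvl}: outside the bounded set $\glvl_{\eps}$ the deterministic descent is at least $\eps^{2}\curr[\step]$, so excursions away from $\glvl_{\eps}$ must be short and their cumulative noise can be absorbed into the strict drift.
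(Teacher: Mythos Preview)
Your approach works for $\pexp>1/2$ (where $\sum_{\run}\curr[\step]^{2}<\infty$) and is essentially the Bertsekas--Tsitsiklis argument the paper itself cites as a precursor that does \emph{not} cover the full range. The gap is the regime $\pexp\in(2/(\qexp+2),1/2]$: your claim that BDG together with Minkowski in $L^{\qexp/2}$ yields $\sup_{N}\exof{\abs{M_{N}}^{\qexp}}<\infty$ is incorrect. Minkowski applied to the quadratic variation gives only $\bigl\lVert[M]_{N}\bigr\rVert_{L^{\qexp/2}}\le \lips^{2}\noisevar\sum_{\runalt\le N}\iter[\step]^{2}$, which diverges, and the hypothesis $\sum_{\runalt}\iter[\step]^{1+\qexp/2}<\infty$ never enters that bound. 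Indeed, when $\sum_{\runalt}\iter[\step]^{2}=\infty$ the martingale $M_{N}$ need not converge (its predictable quadratic variation can diverge) and $R_{N}$ need not stay bounded, so no moment refinement on the increments rescues this route. Your excursion fallback is in the right spirit but too vague to count: an excursion from $\glvl_{\eps}$ can be arbitrarily long, and controlling its accumulated noise is precisely the difficulty.

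The paper proceeds differently. It never bounds $M_{N}$ or $R_{N}$; it shows only that the \emph{ratios} over the effective time $\proper_{N}=\sum_{\runalt\le N}\iter[\step]$ vanish: $M_{N}/\proper_{N}\to0$ by the law of large numbers for martingale differences (using $\sum_{\run}\curr[\step]^{2}/\curr[\proper]^{2}<\infty$), and $R_{N}/\proper_{N}\to0$ via a H\"older estimate that \emph{does} exploit $\sum_{\runalt}\iter[\step]^{1+\qexp/2}<\infty$. This yields merely $\liminf_{\run}\dist(\curr,\sols)=0$ a.s.\ (if $\norm{\nabla\obj(\curr)}$ stayed bounded below, the drift $-c\,\proper_{N}$ would dominate and $\obj(\curr)\to-\infty$). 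To upgrade a convergent subsequence to boundedness of the full trajectory, the paper then exploits the APT property \emph{itself}---not merely the interpolation---comparing $\apt{\ctime}$ with the flow $\flow{h}{\apt{\ctime}}$ over successive windows of a fixed length and showing inductively that once $\apt{\ctime}$ enters a sublevel $\critnhd_{2\eps}$ it remains in $\critnhd_{3\eps}$ forever. This continuous-time trapping via \eqref{eq:GD}, relying on \cref{asm:gradlvl} for a uniform descent rate of the flow outside $\critnhd_{\eps}$, is the missing idea in your outline.
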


Because of the generality of our assumptions, the proof of \cref{thm:stability} involves a delicate combination of  non-standard techniques;
for completeness, we provide a short sketch below and refer the reader to the supplement for the details.

\begin{proof}[Sketch of proof of \cref{thm:stability}]
The main reasoning evolves along the following lines:

\begin{enumerate}[Step 1.,leftmargin=*]
\item
We first show that, under the stated assumptions, there exists a \textpar{possibly random} subsequence $\state_{\run_{\runalt}}$ of $\curr$ that converges to $\sols$;
formally, $\liminf_{\run\to\infty} \dist(\curr,\sols) = 0$ \as.
As a result, $\apt{\ctime}$ eventually reaches a sublevel set $\flvl_{\eps}$ whose elements are arbitrarily close to $\sols$, \ie there exists some $\ctime_{\eps} > 0$ such that $\apt{\ctime_{\eps}} \in \flvl_{\eps}$.

\item
By a technical argument relying on the regularity assumptions for $\obj$ (\cf \cref{asm:reg}), it can be shown that there exists some \emph{uniform} time window $\tau$ such that $\apt{\ctime}$ remains within uniformly bounded distance to $\flvl_{\eps}$ for all $\ctime \in [\ctime_{\eps},\ctime_{\eps} + \tau]$.
Thus, once $\apt{\ctime}$ gets close to $\flvl_{\eps}$, it will not escape too far within a fixed length of time.

\item
An additional technical argument reveals that, under the stated assumptions for $\obj$, the trajectories of \eqref{eq:GD} either descend the objective by a uniform amount, or they have reached a neighborhood of the critical set $\sols$ where further descent is impossible (or irrelevant).

\item
By combining the two previous steps, we conclude that $\apt{\ctime_{\eps} + \tau} \in \flvl_{\eps}$ at the end of said window.
This argument may then be iterated ad infinitum to show inductively that $\apt{\ctime} \in \flvl_{\eps}$ for all intervals of the form $[\ctime_{\eps} + \runalt\tau, \ctime_{\eps} + (\runalt+1)\tau]$.
\end{enumerate}

Since $\flvl_{\eps}$ is bounded (by \cref{asm:sublvl}), we conclude that $\apt{\ctime}$ remains in a compact set for all $\ctime\geq0$, \ie $\apt{\ctime}$ is precompact.
The conclusion of \Cref{thm:stability} then follows by \cref{cor:APT-step}.
\end{proof}

\subsection{Almost sure convergence}
\label{sec:convergence}

By virtue of \cref{thm:stability}, we are now in a position to state our almost sure convergence result:

\begin{theorem}
\label{thm:convergence}
Suppose that \crefrange{asm:reg}{asm:noise} hold
and
\eqref{eq:SGD} is run with a variable step-size sequence
of the form $\curr[\step] = \Theta(1/\run^{\pexp})$ for some $\pexp \in (2/(\qexp + 2),1]$.
Then, with probability $1$, $\curr$ converges to a \textpar{possibly random} connected component $\sols_{\infty}$ of $\sols$ over which $\obj$ is constant.
\end{theorem}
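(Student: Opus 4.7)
My plan is to combine \cref{prop:APT} and \cref{thm:stability} with the general theory of asymptotic pseudotrajectories due to \citet{BH96,Ben99} applied to the gradient flow \eqref{eq:GD}. The stepsize regime $\pexp\in(2/(\qexp+2),1]$ matches the hypotheses of both \cref{cor:APT-step} and \cref{thm:stability}, so on a probability $1$ event we simultaneously have that the continuous-time interpolation $\apt{\ctime}$ of $\curr$ is an \ac{APT} of \eqref{eq:GD} and that $\sup_{\ctime\geq\cstart}\norm{\apt{\ctime}}<\infty$. In particular, the trajectory $\apt{\ctime}$ is precompact, so its $\omega$-limit set
\[
L(\apt{})
	\equiv \intersect_{\ctime\geq\cstart} \cl\setdef{\apt{\ctimealt}}{\ctimealt\geq\ctime}
\]
is a nonempty, compact, connected subset of $\points$.

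The first main step is to invoke the standard \ac{APT} theorem (see \citet[Thm.~5.7 and Prop.~6.4]{Ben99}): if $\apt{\ctime}$ is a precompact \ac{APT} of a semiflow $\flowmap$, then $L(\apt{})$ is \emph{internally chain transitive} for $\flowmap$, and it is in particular invariant under $\flowmap$. So the task reduces to characterizing the internally chain transitive (ICT) sets of the gradient flow \eqref{eq:GD}.

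The second step is to observe that $\obj$ is a strict Lyapunov function for \eqref{eq:GD} with respect to $\sols$: along any non-stationary solution, $\ddt \obj(\orbit{\ctime}) = -\dnorm{\nabla\obj(\orbit{\ctime})}^{2} < 0$, while on $\sols$ the derivative vanishes. A second standard lemma from \ac{APT} theory (\cf \citet[Prop.~6.4]{Ben99}) then guarantees that every ICT set of \eqref{eq:GD} is contained in $\sols$ and that $\obj$ is constant on it. Applying this to $L(\apt{})$ yields $L(\apt{}) \subseteq \sols$ and $\obj\vert_{L(\apt{})} \equiv \critval$ for some (random) value $\critval\in\R$. Since $L(\apt{})$ is connected and $\sols$ is closed, $L(\apt{})$ lies entirely inside a single (random) connected component $\sols_{\infty}$ of $\sols$, on which $\obj$ equals $\critval$ by continuity.

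Finally, since $\apt{\ctime}$ converges to its $\omega$-limit set in the Hausdorff sense, $\dist(\apt{\ctime}, \sols_{\infty}) \to 0$ as $\ctime\to\infty$, and by the definition of the interpolation \eqref{eq:interpolation} we get $\dist(\curr, \sols_{\infty})\to0$ with probability $1$. The main delicate point in executing this plan is the verification that $\obj$ indeed serves as a \emph{strict} Lyapunov function in the precise sense required by \citet{Ben99} — namely that $\obj(\sols)$ has empty interior in $\R$ — which uses \cref{asm:reg,asm:sublvl,asm:gradlvl} to rule out pathological critical behavior; once this is in place, the rest of the argument is a direct application of the \ac{APT} machinery that \cref{thm:stability} was specifically designed to unlock.
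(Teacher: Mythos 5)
Your proposal is correct and follows essentially the same route as the paper: precompactness of the interpolated trajectory from \cref{thm:stability} feeds into Benaïm's Theorem 5.7 and Proposition 6.4, with $\obj$ serving as a strict Lyapunov function for \eqref{eq:GD}, and connectedness of the limit set pins down a single component $\sols_{\infty}$. The one point you leave vague — that $\obj(\sols)$ has empty interior in $\R$ — does not follow from \cref{asm:reg,asm:sublvl,asm:gradlvl} but from Sard's theorem applied to the $C^{\vdim}$ function $\obj$ (which is precisely why the paper assumes $\vdim$-fold differentiability); with that substitution your argument coincides with the paper's.
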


\begin{corollary}
\label{cor:convergence}
With assumptions as in \cref{thm:convergence}, we have the following:
\begin{enumerate}
\item
$\obj(\curr)$ converges \as to some critical value $\obj_{\infty}$.
\item
Any limit point of $\curr$ is \as a critical point of $\obj$.
\end{enumerate}
\end{corollary}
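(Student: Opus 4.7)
The plan is to deduce both claims directly from \cref{thm:convergence}, using only the continuity of $\obj$ and the closedness of the critical set $\sols$; no further probabilistic machinery is needed, since all randomness is absorbed into the almost-sure event on which \cref{thm:convergence} applies. I therefore fix, throughout, a sample path on that probability-one event, on which there is a connected component $\sols_\infty \subseteq \sols$ with $\dist(\curr,\sols_\infty)\to 0$ and with $\obj\equiv\obj_\infty$ on $\sols_\infty$ for some constant $\obj_\infty$.

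For claim (1), I would use the $\lips$-Lipschitz bound of \cref{asm:reg}: for every $\sol\in\sols_\infty$,
\begin{equation*}
\abs{\obj(\curr) - \obj_\infty} = \abs{\obj(\curr) - \obj(\sol)} \le \lips\, \norm{\curr - \sol}.
\end{equation*}
Taking the infimum over $\sol\in\sols_\infty$ gives $\abs{\obj(\curr) - \obj_\infty} \le \lips\,\dist(\curr,\sols_\infty)\to 0$. Since $\sols_\infty \subseteq \sols = \crit(\obj)$, $\obj_\infty$ is a critical value of $\obj$, which is exactly the assertion.

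For claim (2), I would first observe that $\sols = (\nabla\obj)^{-1}(\{0\})$ is closed by continuity of $\nabla\obj$, so its connected components are closed in $\points$ as well; hence $\sols_\infty$ is closed. For any limit point $\point^*$ of $\curr$, realized as $\point^* = \lim_{\runalt\to\infty}\state_{\run_{\runalt}}$ along a subsequence, continuity of the map $\point\mapsto\dist(\point,\sols_\infty)$ gives
\begin{equation*}
\dist(\point^*,\sols_\infty) = \lim_{\runalt\to\infty}\dist(\state_{\run_{\runalt}},\sols_\infty) = 0,
\end{equation*}
so $\point^*\in\sols_\infty\subseteq\sols$, i.e.\ $\point^*$ is a critical point of $\obj$.

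The only point that might deserve a sentence of commentary is the precise meaning of ``$\curr$ converges to a connected component $\sols_\infty$ of $\sols$'' in \cref{thm:convergence}, which I read in the standard stochastic-approximation sense, namely $\dist(\curr,\sols_\infty)\to 0$ \as. Beyond this reading, no real obstacle remains; the corollary is essentially a clean reformulation of \cref{thm:convergence} in terms of the scalar sequence $\obj(\curr)$ and the limit-point set, using nothing more than Lipschitz continuity of $\obj$ and closedness of the critical set.
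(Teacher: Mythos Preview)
Your argument is correct. The paper does not supply a separate proof of \cref{cor:convergence}; it is stated as an immediate consequence of \cref{thm:convergence}, and your write-up is precisely the natural way to spell out that implication. Your reading of ``$\curr$ converges to $\sols_\infty$'' as $\dist(\curr,\sols_\infty)\to 0$ is the intended one (it is what Theorem~5.7 and Proposition~6.4 of Bena\"im, invoked in the proof of \cref{thm:convergence}, actually deliver), and from there both claims follow exactly as you describe: the Lipschitz bound on $\obj$ from \cref{asm:reg} gives convergence of $\obj(\curr)$ to the common value $\obj_\infty$, and closedness of $\sols_\infty$ forces every limit point into $\sols$.
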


\Cref{thm:convergence} extends a range of existing treatments of \eqref{eq:SGD} under explicit boundedness assumptions of the form \eqref{eq:stability}, \cf \citep{Lju77,Ben99,Bor08} and references therein.
It also improves on a similar result by \citet{BT00} who use a completely different analysis to dispense of boundedness requirements via the use of more restrictive step-size policies.
Specifically, \citet{BT00} require the \acl{RM} summability conditions $\sum_{\run} \curr[\step] = \infty$ and $\sum_{\run} \curr[\step]^{2} < \infty$ under a bounded variance assumption.
In this regard, our analysis extends to more general step-size policies, while that of \citet{BT00} cannot because of its reliance on the Robbins-Siegmund theorem for almost-supermartingales \citep{RS71}.
Among other benefits, this added degree of flexibility is a key advantage of the \ac{APT} approach.


The heavy lifting in the proof of \cref{thm:convergence} is provided by \cref{thm:stability}.
Thanks to this boundedness certificate,
the total chain of implications is relatively short, so we provide it in full below.

\begin{proof}[Proof of \cref{thm:convergence}]
Under the stated assumptions, $\obj$ is a strict Lyapunov function for \eqref{eq:GD} in the sense of \citet[Chap.~6.2]{Ben99}. Specifically, this means that $\obj(\flow{\ctime}{\point})$ is strictly decreasing in $\ctime$ unless $\point$ is a stationary point of \eqref{eq:GD}.
Furthermore, by Sard's theorem \citep[Chap.~2]{Mil65}, the set $\obj(\sols)$ of critical values of $\obj$ has Lebesgue measure zero \textendash\ and hence, empty topological interior.
Therefore, applying Theorem 5.7 and Proposition 6.4 of \citet{Ben99} in tandem, we conclude that any precompact \acl{APT} of \eqref{eq:GD} converges to a connected component $\sols_{\infty}$ of $\sols$ over which $\obj$ is constant.
Since \cref{thm:stability} guarantees that the \acp{APT} of \eqref{eq:GD} induced by \eqref{eq:SGD} are bounded with probability $1$, our claim follows.
\end{proof}

\subsection{Avoidance analysis}
\label{sec:avoidance}

\Cref{thm:convergence} represents a strong convergence guarantee but, at the same time, it does not characterize the component of $\sols$ to which $\curr$ converges.
The rest of this section is devoted to showing that $\curr$ does not converge to a component of $\sols$ that only consists of saddle points (a \emph{saddle-point manifold}).
Specifically, we will make precise the following informal statement:
\begin{center}
\emph{\eqref{eq:SGD} avoids strict saddles \textendash\ and sets thereof \textendash\ with probability $1$}.
\end{center}
To set the stage for the analysis to come, we begin by reviewing some classical and recent results on the avoidance of saddle points.
We then present our general results towards the end of the section.

To begin, a crucial role will be played in the sequel by the \emph{Hessian matrix} of $\obj$, viz.
\begin{equation}
\label{eq:Hessian}
\hmat(\point)
	\equiv \nabla^{2}\obj(\point)
	\equiv (\pd_{i} \pd_{j} \obj(\point))_{i,j=1,\dotsc,\vdim}.
\end{equation}
Since $\hmat(\point)$ is symmetric, all of its eigenvalues are real.
If $\saddle$ is a critical point of $\obj$ and
$\lambda_{\min}(\hmat(\saddle)) < 0$, we say that $\saddle$ is a \emph{strict saddle point} \citep{LSJR16,LPPS+19}.

By standard results in center manifold theory \citep{Tes12}, the space around strict saddle points admits a decomposition into a \emph{stable}, \emph{center} and \emph{unstable} manifold (each of the former two possibly of dimension zero; the latter of dimension at least $1$ given that $\lambda_{\min}(\hmat(\saddle)) < 0$).
Heuristically, under the continuous-time dynamics \eqref{eq:GD}, directions along the stable manifold of $\saddle$ are attracted to $\saddle$ at a linear rate, while those along the unstable manifold are repelled (again at a linear rate);
the dynamics along the center manifold could be considerably more complicated, but, in the presence of unstable directions, they only emerge from a measure zero of initial conditions.
As a result, if $\saddle$ is a strict saddle point of $\obj$, it stands to reason that \eqref{eq:SGD} should ``probably'' avoid it as well.

In the case of \emph{deterministic} gradient descent with step-size $\step < 1/\smooth$, this intuition was made precise by \citet{LSJR16,LPPS+19} who proved that all but a measure zero of initializations of gradient descent avoid strict saddles.
As we discussed in the introduction, this result was then extended to various deterministic settings, with different assumptions for the gradient oracle, the method's step-size, or the structure of the saddle-point manifold, see \eg \cite{JGNK+17,LPPS+19,DJLJ+17,FVP19b,PP17,PPW19} and the references therein.

In the stochastic regime, the situation is considerably more involved.
\citet{Pem90} and \citet{BD96} were the first to establish the avoidance of hyperbolic unstable equilibria in general \acl{SA} schemes.
However, a key requirement in the analysis of these works is that of \emph{hyperbolicity}, which in our setting amounts to asking that $\hmat(\saddle)$ is \emph{invertible}.
In particular, this means the saddle point in question cannot be isolated, nor can it have a center manifold:
both hypotheses are too stringent for applications of \ac{SGD} to contemporary machine learning models, such as deep net training, so their results do not apply in many cases of practical interest.

More relevant for our purposes is the recent result of \citet{GHJY15}, who provided the following guarantee.
Suppose that $\obj$ is \emph{$(\alpha,\beta,\eps,\delta)$-strict saddle}, \ie for all $\point\in\points$, one of the following holds:
\begin{enumerate*}
[(\itshape i\hspace*{.5pt}\upshape)]
\item
$\norm{\nabla\obj(\point)} \geq \eps$;
\item
$\lambda_{\min}(\hmat(\point)) \leq - \beta$;
or
\item
$\point$ is $\delta$-close to a local minimum $\point_{c}$ around which $\obj$ is $\alpha$-strongly convex.
\end{enumerate*}
Suppose further that $\obj$ is bounded, $\smooth$-Lipschitz smooth, and $\hmat(\point)$ is $\rho$-Lipschitz continuous;
finally, assume that the noise in the gradient oracle \eqref{eq:SFO} is finite \as and contains a component uniformly sampled from the unit sphere.
Then, given a confidence level $\zeta>0$, and assuming that \eqref{eq:SGD} is run with \emph{constant} step-size $\step = \bigoh(1/\log(1/\zeta))$, the algorithm produces after a given number of iterations a point which is $\bigoh(\sqrt{\gamma \log(1/(\gamma\zeta))})$-close to $\point_{c}$, and hence away from any strict saddle of $\obj$, with probability at least $1-\zeta$.

In a more recent paper, \citet{VS19} examined the convergence of \eqref{eq:SGD} to second-order stationary points.
More precisely, they showed that \eqref{eq:SGD} guarantees \emph{expected} descent for strict saddle points in a finite number of iterations, and with high probability, \eqref{eq:SGD} iterates reach a set of \emph{approximate} second-order stationary points in finite time.

The theory of \citet{Pem90} and the result of \citet{GHJY15} paint a complementary picture to the above:
\citet{Pem90} shows that saddle points are avoided with probability $1$, provided they are hyperbolic (\ie $\det \nabla^{2}\obj(\saddle) \neq 0$);
on the other hand, \citet{GHJY15} require much less structure on the saddle point, but they only provide a result with high probability (and $\zeta$ cannot be taken to zero because the range of allowable step-sizes would also vanish).%
\footnote{\citet{Pem90} employs a vanishing step-size, which is more relevant for us:
\eqref{eq:SGD} with persistent noise and a constant step-size is an irreducible ergodic Markov chain whose trajectories do not converge \emph{anywhere} \citep{Ben99}.}
Our objective in the sequel is to provide a result that combines the ``best of both worlds'', \ie almost sure avoidance of strict saddle points (and sets thereof) with probability $1$.

To that end, we make the following assumption for the noise:
\begin{assumption}
\label{asm:exciting}
The error term $\noise \equiv \noise(\point;\sample)$ of \eqref{eq:SFO} is \emph{uniformly exciting}, \ie there exists some $\const>0$ such that
\begin{equation}
\label{eq:exciting}
\exof{\braket{\noise(\point;\sample)}{\unitvec}^{+}}
	\geq \const
\end{equation}
for all $\point\in\points$ and all unit vectors $\unitvec \in \sphere^{\vdim-1}$.
\end{assumption}

This assumption simply means that the average projection of the noise along every ray in $\points$ is uniformly positive;
in other words, $\noise$ ``excites'' all directions uniformly \textendash\ though not necessarily \emph{isotropically}.
As such, \cref{asm:exciting} is automatically satisfied by noisy \acl{GD} (\eg as in \citet{GHJY15}), generic finite sum objectives with at least $\vdim$ summands, etc.


\begin{figure}[tbp]
\centering
\includegraphics[width=.5\textwidth]{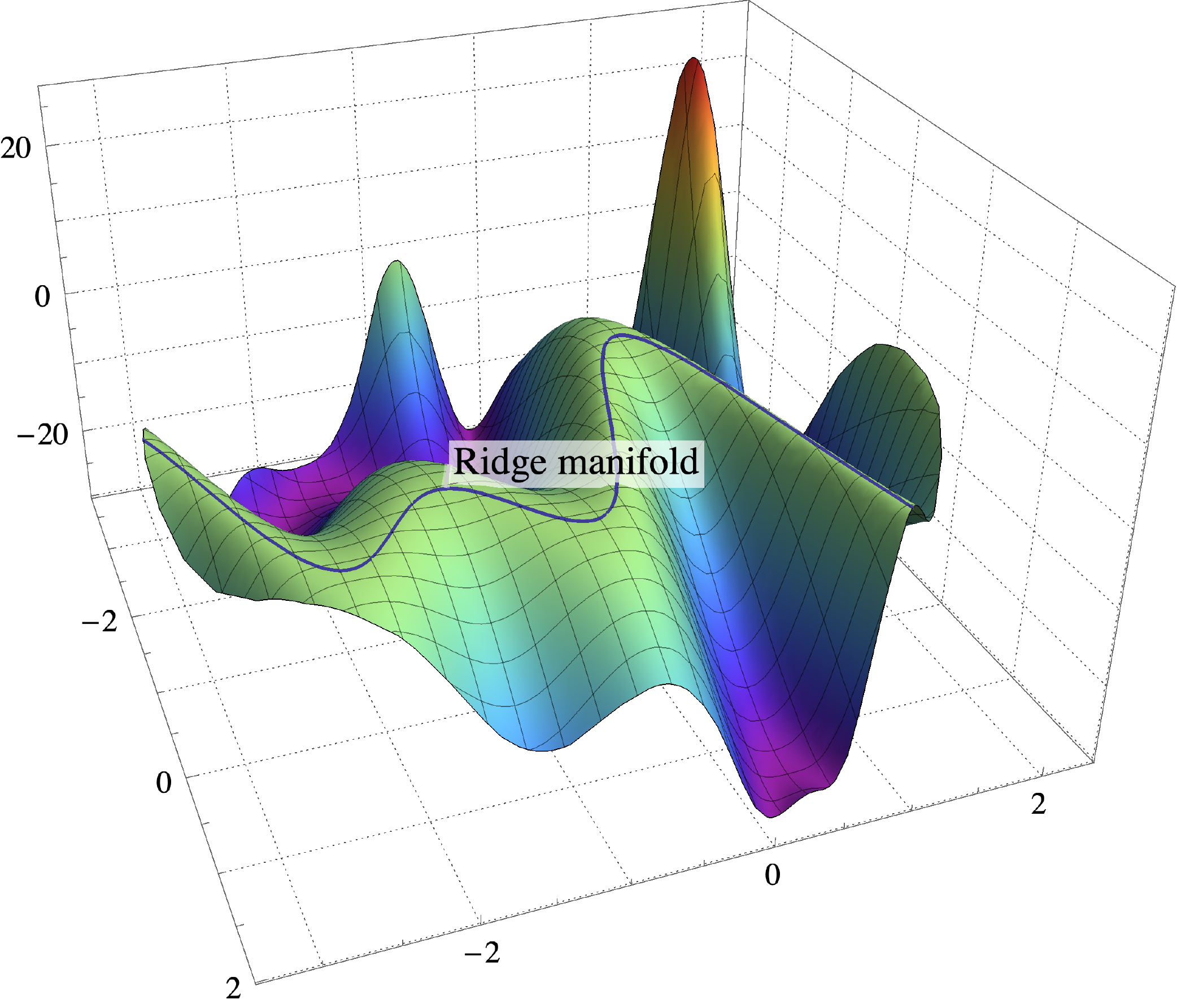}
\caption{A strict saddle manifold (a ridge), typical of ResNet loss landscapes \citep{LXTS+18}.}
\label{fig:saddle}
\end{figure}


With all this in hand, we say that $\saddles$ is a \emph{strict saddle manifold} of $\obj$ if it is a smooth connected component of $\sols$ such that:
\begin{enumerate}
\item
Every $\saddle\in\saddles$ is a strict saddle point of $\obj$ \textpar{\ie $\lambda_{\min}(\hmat(\saddle)) < 0$}.
\item
There exist $\const_{-},\const_{+}>0$ such that, for all $\saddle\in\saddles$, all negative eigenvalues of $\hmat(\saddle)$ are bounded from above by $-\const_{-} < 0$, and any positive eigenvalues (\emph{if} they exist) are bounded from below by $\const_{+}$.
\end{enumerate}

Somewhat informally, the definition of a strict saddle manifold implies that the eigenspaces of $\hmat(\saddle)$ corresponding to zero, positive, and negative eigenvalues decompose smoothly along $\saddles$
and $\saddles$ can be seen as an ``integral manifold'' of the nullspace of the Hessian of $\obj$.

With all this in hand, we are finally in a position to state our main avoidance result.

\begin{restatable}{theorem}{avoidance}
\label{thm:avoidance}
Suppose that \eqref{eq:SGD} is run with a variable step-size sequence of the form $\curr[\step]\propto 1/\run^{\pexp}$ for some $\pexp \in (0,1]$.
If
\crefrange{asm:reg}{asm:exciting} hold \textpar{with $\qexp=\infty$ for \cref{asm:noise}},
and
$\saddles$ is a strict saddle manifold of $\obj$,
we have
\(
\probof{\curr \to \saddles \; \text{as $\run\to\infty$}}
	= 0.
\)
\end{restatable}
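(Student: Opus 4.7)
The plan is to argue by contradiction and to run a localized Pemantle\textendash Brandière\textendash Duflo instability argument along the unstable normal bundle of $\saddles$. Suppose, toward a contradiction, that $\probof{\Omega_{\infty}} > 0$ where $\Omega_{\infty} = \braces*{\curr \to \saddles}$. By \cref{thm:stability,prop:APT} the iterates are bounded and $\apt{\ctime}$ is an \ac{APT} of \eqref{eq:GD}, so on $\Omega_{\infty}$ the sequence eventually enters and stays in any preassigned neighborhood of $\saddles$; the entire argument can therefore be carried out inside a tubular neighborhood of the manifold.

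For the geometric setup, the uniform spectral gap built into the definition of a strict saddle manifold \textemdash{} every nonzero eigenvalue of $\hmat(\sigma)$ is either $\leq -\const_{-}$ or $\geq \const_{+}$ for $\sigma\in\saddles$, and the zero eigenspace coincides with $\tangent{\sigma}{\saddles}$ \textemdash{} makes $\saddles$ a \ac{NHM} for \eqref{eq:GD}. The classical Hirsch\textendash Pugh\textendash Shub / Fenichel theorems then yield smoothly varying stable and unstable normal bundles $E^{s}(\sigma), E^{u}(\sigma)$ over $\saddles$, a tubular neighborhood $\nhd \supset \saddles$, and a smooth footpoint map $\sigma(\cdot)\from\nhd\to\saddles$. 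Writing $\curr[Y] = \pi^{u}_{\sigma(\curr)}\bracks*{\curr - \sigma(\curr)}$ for the unstable projection and Taylor-expanding $\nabla\obj$ around $\sigma(\curr)$ (using $\nabla\obj(\sigma(\curr))=0$), the \eqref{eq:SGD} recursion gives
\begin{equation*}
\next[Y]
	= (\eye + \curr[\step]\, A(\sigma(\curr)))\, \curr[Y]
	- \curr[\step]\, \pi^{u}_{\sigma(\curr)}\, \curr[\noise]
	+ \curr[\step]\, \curr[R],
\end{equation*}
where $A(\sigma)$ is the restriction of $-\hmat(\sigma)$ to $E^{u}(\sigma)$ (eigenvalues bounded below by $\const_{-}$) and $\norm{\curr[R]} = \bigoh(\norm{\curr - \sigma(\curr)})$; on $\Omega_{\infty}$, $\curr[R]\to 0$.

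The final step is to derive a contradiction by showing $\curr[Y]\not\to 0$ \as on $\Omega_{\infty}$. \Cref{asm:exciting} guarantees $\exof{\braket{\curr[\noise]}{\unitvec}^{+} \given \curr[\filter]} \geq \const$ uniformly for every $\unitvec\in\sphere^{\vdim-1}$, so the projected noise $\pi^{u}_{\sigma(\curr)}\curr[\noise]$ injects, at each step, an expected kick of magnitude $\gtrsim\const$ into $E^{u}$; combined with the linear expansion factor $\eye + \curr[\step] A$ (whose spectrum is bounded below by $1+\curr[\step]\const_{-}$) and with $\sum_{\run}\curr[\step] = \infty$, this forces $\norm{\curr[Y]}$ to behave like a noise-driven, linearly expanding process that cannot converge to zero, which is the content of the instability lemma of \citet{Pem90} and \citet{BD96}. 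The main technical obstacle is that $E^{u}(\sigma(\curr))$ varies with the drifting footpoint along the neutral direction $\tangent{\sigma}{\saddles}$, so the martingale differences $\pi^{u}_{\sigma(\curr)}\curr[\noise] - \exof{\pi^{u}_{\sigma(\curr)}\curr[\noise] \given \curr[\filter]}$ live in a moving subspace rather than a fixed linear one. To resolve this I would cover the compact set $\saddles \cap \flvl_{c}$ (bounded by \cref{asm:sublvl}) by finitely many charts trivializing $E^{u}$, introduce stopping times at which $\sigma(\curr)$ first exits each chart, apply the instability theorem on each chart with a now-fixed linear subspace, and conclude by a union bound \textendash\ contradicting $\probof{\Omega_{\infty}}>0$.
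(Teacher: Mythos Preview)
Your high-level strategy matches the paper's: both reduce the claim to a Pemantle\textendash Brandi\`ere\textendash Duflo instability argument in the unstable directions, with \cref{asm:exciting} supplying a uniformly positive kick transversal to the center-stable manifold. Where the two diverge is in the handling of the ``moving frame'' problem, and this is where your proposal has a genuine gap.

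You work directly with the vector $\curr[Y] = \pi^{u}_{\sigma(\curr)}\bracks{\curr - \sigma(\curr)}$, which lives in the fiber $E^{u}(\sigma(\curr))$ over a drifting footpoint $\sigma(\curr)\in\saddles$. To cope with the drift along the neutral direction you propose to trivialize the bundle over finitely many charts, stop the process when it exits a chart, apply the instability lemma inside, and finish by a union bound. This does not work as stated. On the event $\Omega_{\infty}$ nothing prevents $\sigma(\curr)$ from wandering along $\saddles$ and crossing chart boundaries infinitely often; the Pemantle-type lemmas require control of the increments for \emph{all} $\run$ simultaneously, not merely over a random excursion in one chart. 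You therefore cannot infer ``$\curr[Y]\not\to 0$ globally'' from finitely many chart-local instability statements, and the union bound you invoke has no clear meaning because the chart visits are not disjoint in time. A related issue is hidden in your recursion for $\curr[Y]$: the passage from $\pi^{u}_{\sigma(\curr)}$ to $\pi^{u}_{\sigma(\next)}$ introduces an additional $\bigoh(\curr[\step])$ term coming from the footpoint shift, which is of the same order as the drift you want to exploit and cannot simply be absorbed into $\curr[R]$.

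The paper sidesteps all of this by constructing a single \emph{scalar} energy $\lyap$ defined on a full neighborhood $\nhd_{\saddles}$ of $\saddles$, following Bena\"im's Proposition~9.5. One first extends the unstable bundle $\negdirs{\saddles}$ smoothly to a neighborhood, uses it to build a nonlinear projector $\Pi$ onto the center-stable manifold $\mfld$, sets $\prelyap(\wpoint) = \norm{\Pi(\wpoint) - \wpoint}$, and then averages along the backward flow, $\lyap(\point) = \int_{0}^{\tau} \prelyap(\flow{-\ctime}{\point})\,d\ctime$. This $\lyap$ is globally defined, coordinate-free, vanishes exactly on $\mfld$ (so it is insensitive to neutral drift along $\saddles$), and satisfies $\nabla^{+}\lyap(\point)[\nabla\obj(\point)] \leq -\beta\,\lyap(\point)$ together with a uniform lower bound on $\nabla^{+}\lyap$ in the transversal directions. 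The scalar process $\curr[\lyap] = \lyap(\curr)$ (extended by $\curr[\step]$ after exit from $\nhd_{\saddles}$) then verifies the hypotheses of \cref{lem:Pem-large,lem:Pem-small} directly, with no chart-switching. If you want to salvage your approach, the cleanest fix is precisely this: replace the vector-valued, fiberwise quantity $\curr[Y]$ by a globally defined scalar Lyapunov function whose zero set is $\mfld$; the flow-averaging trick is what manufactures the needed uniform estimates.
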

\Cref{thm:avoidance} is the formal version of the avoidance principle that we stated in the beginning of this section.
Importantly, it makes \emph{no} assumptions regarding the initialization of \eqref{eq:SGD} and holds for \emph{any} initial condition.

The proof of \cref{thm:avoidance} relies on two basic components.
The first is a probabilistic estimate, originally due to \citet{Pem90}, that shows that a certain class of stochastic processes avoid zero with probability $1$.
The second is a differential-geometric argument, building on \citet{BH95} and \citet{Ben99}, and relying on center manifold theory to isolate the center/stable and unstable manifolds of $\saddles$.
Combining these two components, it is possible to show that even ambulatory random walks along the stable manifold of $\saddles$ will eventually be expelled from a neighborhood of $\saddles$.
We provide the details of this argument in the paper's supplement.

\subsection{Rate of convergence}
\label{sec:rate}

We conclude our analysis of \eqref{eq:SGD} by establishing the algorithm's rate of convergence, as stated in \cref{thm:rate} below.
Since $\obj$ is non-convex, any convergence rate analysis of this type must be a fortiori local;
in view of this, we will examine the algorithm's convergence to local minimizers $\sol\in\sols$ that are regular in the sense of Hurwicz, \ie $\hmat(\sol) \mg 0$.

Because we are primarily interested in the convergence of the algorithm's trajectories, we focus here on the distance $\curr[\breg] = \norm{\curr - \sol}^{2}/2$ between the iterates of \eqref{eq:SGD} and a local minimizer of $\obj$.
In this light, our rate guarantee (which we state below), differs substantially from other results in the literature, in both scope and type, as it does not concern the ergodic average $\bar\state_{\run} = \run^{-1} \sum_{\runalt=\start}^{\run} \iter$ or the ``best iterate'' $\curr^{\text{best}} = \argmin_{\runalt=\start,\dotsc,\run} \norm{\nabla\obj(\iter)}$ of \eqref{eq:SGD}:
the former has very weak convergence in convex settings (if at all), while the latter cannot be calculated with access to perfect gradient information for the entire run of the process (in which case, stochastic \acl{GD} would become \emph{ordinary} \acl{GD}).


\begin{restatable}{theorem}{rate}
\label{thm:rate}
Fix some tolerance level $\delta>0$,
let $\sol$ be a regular minimizer of $\obj$,
and
suppose that \cref{asm:noise} holds.
Assume further that \eqref{eq:SGD} is run with a step-size schedule of the form $\curr[\step] = \step/(\run + \offset)^{\pexp}$ for some $\pexp \in (2/(\qexp + 2),1]$ and large enough $\offset,\step > 0$.
Then:
\begin{enumerate}
\setlength{\itemsep}{\smallskipamount}
\item
There exist neighborhoods $\nhd$ and $\nhd_{1}$ of $\sol$ such that, if $\init\in\nhd_{1}$, the event
\begin{equation}
\label{eq:stay}
\event_{\nhd}
	= \{\curr\in\nhd\;\text{for all $\run=\running$}\}
\end{equation}
occurs with probability at least $1-\delta$.
\item
Conditioned on $\event_{\nhd}$, we have
\begin{equation}
\label{eq:rate}
\exof{\norm{\curr - \sol}^{2} \given \event_{\nhd}}
	 = \bigoh(1/\run^{\pexp}).
\end{equation}
\end{enumerate}
\end{restatable}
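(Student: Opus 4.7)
The plan is to exploit the regularity hypothesis $\hmat(\sol) \mg 0$ to obtain a local contraction in expectation for the squared distance $\curr[\breg] = \tfrac{1}{2}\norm{\curr - \sol}^{2}$, from which both claims follow from a single master recursion: part (i) via a stopping-time supermartingale and Ville's inequality, and part (ii) via a Chung-style induction on the expected value. By continuity of $\hmat$ and the Hurwicz condition, there exist $r > 0$ and $\mu > 0$ such that on the ball $\nhd = \ball(\sol, r)$, a Taylor expansion of $\nabla\obj$ about $\sol$ (using $\nabla\obj(\sol) = 0$) yields both the star-convexity estimate $\braket{\nabla\obj(\point)}{\point - \sol} \geq \mu \norm{\point - \sol}^{2}$ and the linear upper bound $\norm{\nabla\obj(\point)} \leq \smooth \norm{\point - \sol}$ coming from \cref{asm:reg}. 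Squaring the \eqref{eq:SGD} update, taking $\curr[\filter]$-conditional expectation, and invoking \cref{asm:noise} then gives, on $\{\curr \in \nhd\}$, the master recursion
\begin{equation*}
\exof{\next[\breg] \given \curr[\filter]}
	\leq (1 - 2\mu \curr[\step]) \curr[\breg] + C \curr[\step]^{2},
\end{equation*}
where $C$ depends only on $r$, $\smooth$ and $\noisepar^{2}$.

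For part (i), let $\tau = \inf\{\run : \curr \notin \nhd\}$ and introduce the compensated stopped process $\wilde{\breg}_{\run} = \breg_{\run \wedge \tau} + C \insum_{k \geq \run} \iter[\step]^{2}$. Once $\offset$ is large enough that $\curr[\step] \leq 1/(2\mu)$ for every $\run$, the master recursion turns $\wilde\breg_{\run}$ into a non-negative $\curr[\filter]$-supermartingale; the tail sum is finite in the regime $2\pexp > 1$, and for smaller $\pexp$ permitted by $\qexp > 2$ the same construction carries through after replacing Ville's $L^{2}$ bound with its $L^{\qexp}$-counterpart built on $\exof{\norm{\curr[\noise]}^{\qexp}} \leq \noisepar^{\qexp}$. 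Ville's maximal inequality then yields $\probof{\tau < \infty} \leq \tfrac{2}{r^{2}}\bigl(\init[\breg] + C \insum_{k \geq \start} \iter[\step]^{2}\bigr)$; since the tail sum is $\bigoh(\step^{2} \offset^{1-2\pexp})$, choosing $\offset$ large enough (for the already-fixed $\step$) and then $\nhd_{1} \subset \ball(\sol, r\sqrt{\delta}/2)$ drives the right-hand side below $\delta$.

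For part (ii), apply the master recursion to the stopped quantity $a_{\run} = \exof{\breg_{\run} \one_{\{\tau > \run\}}}$: the inclusion $\{\tau > \run+1\} \subset \{\tau > \run\}$ together with the tower property converts the conditional bound into the deterministic recursion $a_{\run+1} \leq (1 - 2\mu \curr[\step]) a_{\run} + C \curr[\step]^{2}$. A standard Chung-type induction — trying $a_{\run} \leq K/(\run + \offset)^{\pexp}$ — closes for $K$ large enough, the only binding constraint being $2\mu\step > \pexp$ in the boundary case $\pexp = 1$ (hence the ``large enough $\step$'' clause in the theorem). Dividing by $\probof{\event_{\nhd}} \geq 1 - \delta$ and using the domination $\one_{\event_{\nhd}} \leq \one_{\{\tau > \run\}}$ then converts this into the stated $\exof{\curr[\breg] \given \event_{\nhd}} = \bigoh(1/\run^{\pexp})$.

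The main obstacle is the coupling between the two parts: the contractive recursion is valid only inside $\nhd$, so the rate argument cannot be iterated without first securing containment, while Ville's inequality yields a meaningful containment bound only once the variance accumulator $C \insum \iter[\step]^{2}$ is small compared to $r^{2}$. The joint requirement of $\step$ large (to beat $\pexp$ in the Chung step when $\pexp = 1$) and $\offset$ large (to shrink the variance accumulator for Ville's step) in the theorem statement is precisely what untangles this circular dependency.
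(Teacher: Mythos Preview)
Your proposal is correct, and for part~(ii) it coincides with the paper's argument: both reduce to the deterministic recursion $a_{\run+1} \leq (1-2\qmin\curr[\step])a_{\run} + C\curr[\step]^{2}$ for $a_{\run} = \exof{\curr[\breg]\one_{\curr[\event]}}$ (your $\{\tau > \run\}$ is exactly the paper's $\curr[\event]$), followed by Chung's lemma, with the same constraint $2\qmin\step > \pexp$ when $\pexp=1$.

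For part~(i), however, you take a genuinely different and more economical route. The paper does \emph{not} use a stopped supermartingale and Ville's inequality. Instead it builds an auxiliary ``cumulative mean-square error'' process $\curr[R] = \curr[M]^{2} + \curr[S]$, where $\curr[M] = \sum_{\runalt\leq\run}\iter[\step]\iter[\snoise]$ and $\curr[S] = \tfrac{1}{2}\sum_{\runalt\leq\run}\iter[\step]^{2}\norm{\iter[\signal]}^{2}$, introduces the small-error events $\curr[\eventalt] = \{\iter[R]\leq\eps\ \text{for all}\ \runalt\leq\run\}$, and proves inductively that $\prev[\eventalt]\subseteq\curr[\event]$; it then telescopes a recursive bound on $\exof{\curr[R]\one_{\prev[\eventalt]}}$ to control $\sum_{\runalt}\probof{\iter[\tilde\eventalt]}$ for the ``first large error'' events $\iter[\tilde\eventalt]$. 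Your compensated-supermartingale argument bypasses all of this machinery: once the one-step contraction is in hand, Ville delivers containment in two lines. What the paper's decomposition buys is a cleaner separation of the martingale and drift components of the error, which makes the H\"older modification for $\pexp\leq 1/2$ (borrowed from the proof of their subsequence lemma) more transparent; your remark that an ``$L^{\qexp}$-counterpart'' of Ville handles that regime is the only point that would need more care, since there is no off-the-shelf $L^{\qexp}$ Ville inequality and you would likely have to rebuild the compensator using $\sum\curr[\step]^{1+\qexp/2}$ instead. But for the main regime $\pexp>1/2$ (which is also where the paper gives full details), your argument is both correct and tighter.
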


\begin{remark*}
Note that \cref{thm:rate} does not presuppose \cref{asm:reg,asm:sublvl,asm:gradlvl};
since the rate analysis is local, the differentiability of $\obj$ suffices.
\end{remark*}


The proof of \cref{thm:rate} relies on showing that
\begin{enumerate*}
[\itshape a\upshape)]
\item
$\sol$ is \emph{stochastically stable}, \ie with high probability, any initialization that is close enough to $\sol$ remains close enough;
and
\item
conditioned on this event, the distance $\curr[\breg] = (1/2) \norm{\curr - \sol}^{2}$ to a regular local minimizers behaves as an ``almost'' supermartingale.
\end{enumerate*}
A major complication that arises here is that this conditioning changes the statistics of the noise, so the martingale property ceases to hold.
Overcoming this difficulty requires an intricate probablistic argument that we present in the supplement (where we also provide explicit expressions of the constants in the estimate of \cref{thm:rate}).

\section{Numerical experiments}
\label{sec:numerics}

As an illustration of our theoretical analysis, we plot in \cref{fig:Shekel} the convergence rate of \eqref{eq:SGD} in the standard Shekel risk benchmark function $\obj(\point) = \sum_{i=1}^{N} \big[\sum_{j=1}^{\vdim} (\point_{j} - a_{ij})^{2} + c_{i}\big]^{-1}$ where $A = (a_{ij})$ is a skew data matrix and $c = (c_{1},\dotsc,c_{N})$ is a bias vector of dimension $\vdim=500$ \citep{JY13}.
For our experiments, we ran $N=10^{3}$ instances of \eqref{eq:SGD} with a constant, $1/\sqrt{\run}$, and $1/\run$ step-size schedule, and we plotted the value difference $\obj(\curr) - \obj_{\infty}$ of the sample average (marked black lines) and the min-max spread of the samples for a 95\% confidence level region (shaded green, red and blue respectiely for the constant, $1/\sqrt{\run}$ and $1/\run$ policies respectively).
The constant step-size schedule initially performs better, but quickly saturates and is overcome by the $1/\run$ schedule;
overall, the $1/\run$ policy converges faster than the other two by $2$ to $4$ orders of magnitude.

Coupled with our theoretical results, these tests suggest that a vanishing step-size policy could have significant advantages when used for training machine learning models.
The key drawback to this approach is that a rapidly vanishing step-size could cause the algorithm to traverse the loss landscape at a very slow pace and/or get trapped at inferior local minima.
However, it also provides a sound theoretical justification for the following ``best of both worlds'' training heuristic:
given a budget of gradient iterations, run \ac{SGD} with a constant step-size for a fraction of this budget, and then implement a ``cooldown'' phase with a vanishing step-size for the rest.
We demonstrate the benefits of this ``cooldown'' heuristic in a standard ResNet18 architecture for a classification task over CIFAR10.
In particular, in \cref{fig:ResNet}, we ran \eqref{eq:SGD} with a constant step-size for $100$ epochs, with checkpoints at different cutoffs;
then, at each checkpoint, we launched the ``cooldown'' period with step-size $1/\run$. \cref{fig:ResNet} demonstrates the improvement due to the cool-off period over the training loss:
specifically, it shows that it is always beneficial to run the last training epochs with a vanishing step-size.


\begin{figure*}[t]
\centering
\footnotesize
\begin{subfigure}{.46\linewidth}
\centering
\includegraphics[height=5cm]{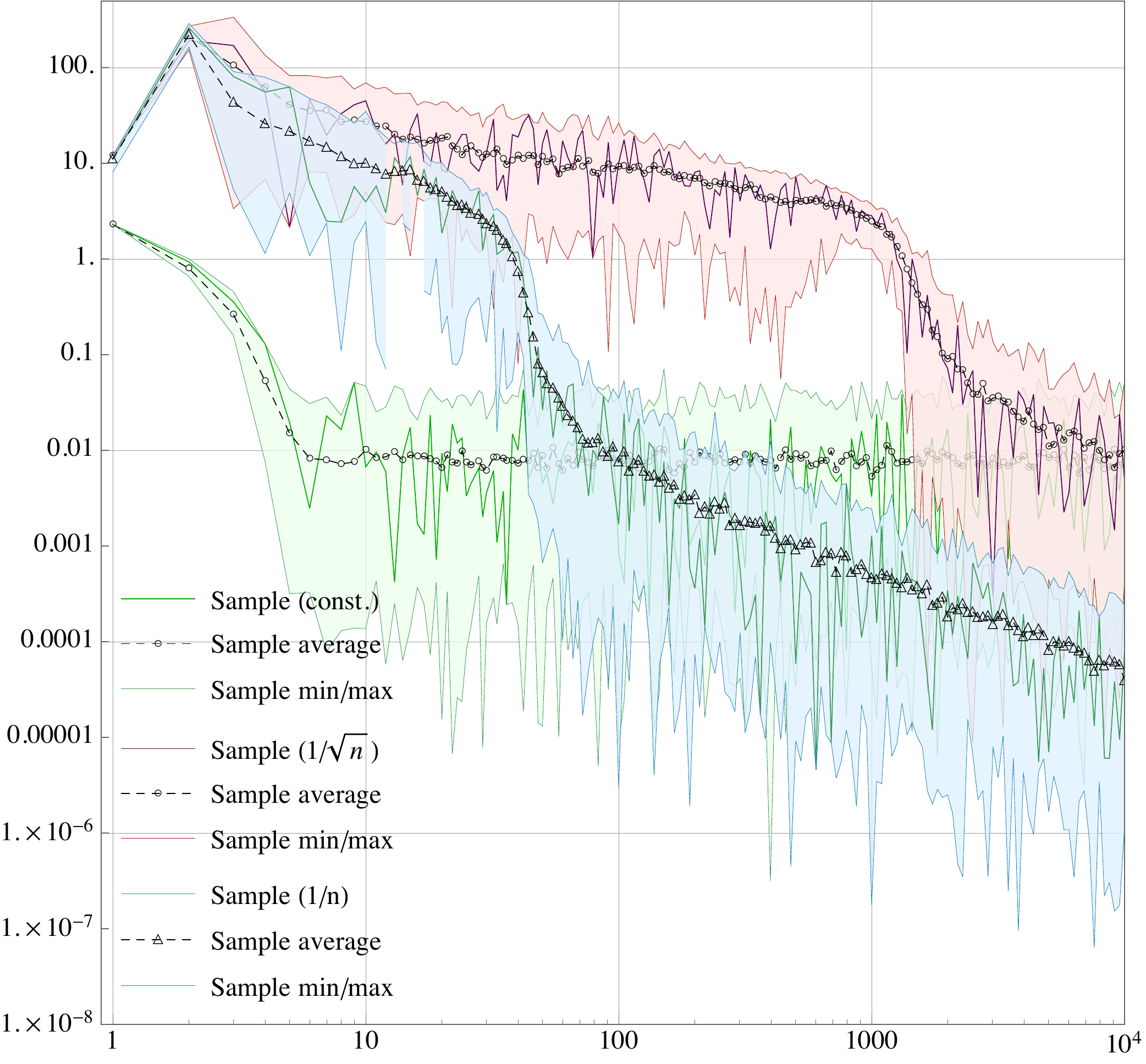}%
\caption{Speed of convergence in the Shekel benchmark.}
\label{fig:Shekel}
\end{subfigure}
\qquad
\begin{subfigure}{.46\linewidth}
\centering
\includegraphics[height=5cm]{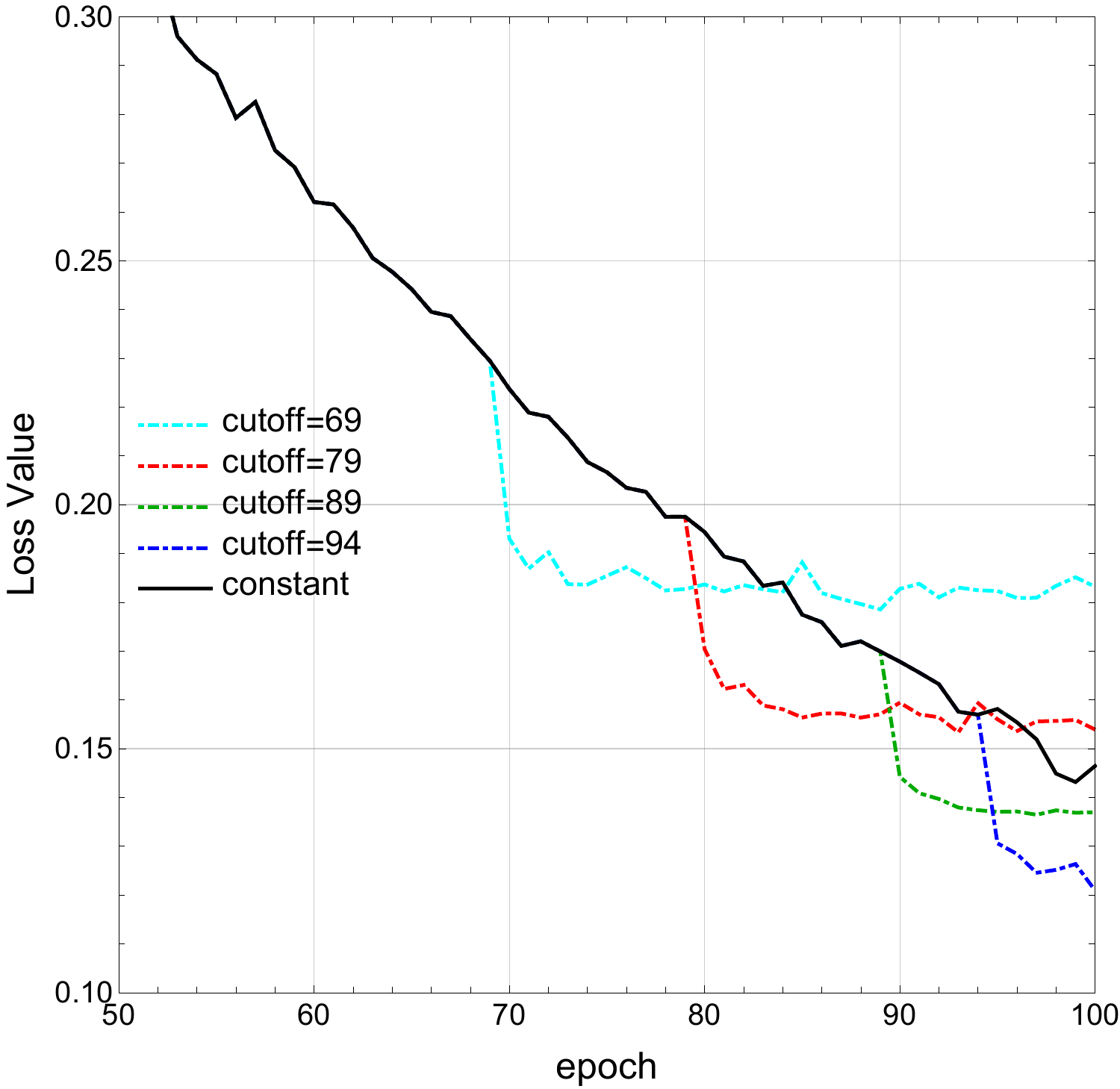}%
\caption{Training ResNet with a cooldown heuristic.}
\label{fig:ResNet}
\end{subfigure}
\end{figure*}

\section{Concluding remarks}
\label{sec:conclusion}

Our aim in this paper was to present a novel trajectory-based analysis of \eqref{eq:SGD} showing that, under minimal assumptions,
\begin{enumerate*}
[(\itshape i\hspace*{.5pt})]
\item
all of its limit points are stationary;
\item
it avoids strict saddle manifolds with probability $1$;
and
\item
it converges at a fast $\bigoh(1/\run)$ rate to regular minimizers.
\end{enumerate*}
This opens the door to many interesting directions \textendash\ from constrained/composite problems to adaptive gradient methods.
We defer these to the future.

\appendix
\numberwithin{equation}{section}		
\numberwithin{lemma}{section}		
\numberwithin{proposition}{section}		
\numberwithin{theorem}{section}		

\section{Convergence in continuous time}
\label{app:cont}

For completeness, we begin with a proof of the convergence of \eqref{eq:GD} under our blanket assumptions:

\begin{proposition}
[Gradient flow convergence]
\label{prop:conv-GD}
Under \cref{asm:reg,asm:sublvl}, every solution $\orbit{\ctime}$ of \eqref{eq:GD} converges to $\sols$.
\end{proposition}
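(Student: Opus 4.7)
The plan is to treat $\obj$ as a strict Lyapunov function along \eqref{eq:GD}, extract an integrability estimate on the gradient norm, strengthen this to pointwise vanishing via a Barbalat-type argument, and then exploit precompactness to conclude convergence to $\sols$. Concretely, I first differentiate $\obj$ along the orbit to obtain $\ddt\obj(\orbit{\ctime}) = \braket{\nabla\obj(\orbit{\ctime})}{\dorbit{\ctime}} = -\dnorm{\nabla\obj(\orbit{\ctime})}^{2}$, so $\obj(\orbit{\ctime})$ is non-increasing in $\ctime$. Assuming $\obj(\orbit{0}) < \sup\obj$ (else $\orbit{0}$ is already a global maximum with $\nabla\obj(\orbit{0})=0$, and the orbit is stationary in $\sols$), \cref{asm:sublvl} forces the trajectory to stay inside the bounded sublevel set $\cpt \equiv \flvl_{\obj(\orbit{0})}$. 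Continuity of $\obj$ on $\cpt$ then provides a uniform lower bound, and integrating the dissipation identity yields $\int_{0}^{\infty} \dnorm{\nabla\obj(\orbit{\ctimealt})}^{2} \dd\ctimealt < \infty$.

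The next step is to upgrade this integrability to the pointwise statement $\dnorm{\nabla\obj(\orbit{\ctime})} \to 0$ as $\ctime\to\infty$. Since $\dnorm{\dorbit{\ctime}} = \dnorm{\nabla\obj(\orbit{\ctime})} \leq \lips$ by \cref{asm:reg}, the orbit is globally $\lips$-Lipschitz in $\ctime$; composing with the $\smooth$-Lipschitz map $\nabla\obj$ and with the $(2\lips)$-Lipschitz map $\vvec\mapsto\dnorm{\vvec}^{2}$ on the ball of radius $\lips$ shows that $\ctime\mapsto\dnorm{\nabla\obj(\orbit{\ctime})}^{2}$ is itself Lipschitz with a uniform constant (on the order of $2\lips^{2}\smooth$). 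A uniformly continuous, nonnegative function on $[0,\infty)$ with finite integral must vanish at infinity, so $\dnorm{\nabla\obj(\orbit{\ctime})}\to 0$.

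Finally, I would conclude by compactness: the orbit lies in the compact set $\cpt$, hence its $\omega$-limit set is nonempty, and by continuity of $\nabla\obj$ together with the previous step, every accumulation point $\point^{\ast}$ of $\orbit{\ctime}$ satisfies $\nabla\obj(\point^{\ast})=0$ and therefore belongs to $\sols$. A standard sequential extraction argument (if $\dist(\orbit{\ctime_{n}},\sols)\geq\eps$ along some $\ctime_{n}\to\infty$, pass to a convergent subsequence to obtain a limit point outside $\sols$) then rules out $\dist(\orbit{\ctime},\sols)\not\to 0$ and yields the claim.

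I expect the main technical step to be the Barbalat-type passage from integrability to pointwise decay: without the uniform Lipschitz control on $\dnorm{\nabla\obj(\orbit{\ctime})}^{2}$ along the orbit, integrability on its own does not preclude transient spikes, so both halves of \cref{asm:reg} (the gradient bound and the smoothness constant) are genuinely used. The remaining ingredients\textemdash Lyapunov monotonicity, compact sublevels, and sequential extraction\textemdash are routine and require no machinery beyond what is already granted by \cref{asm:reg,asm:sublvl}.
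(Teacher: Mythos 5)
Your proof is correct. It differs from the paper's in the key intermediate step: the paper argues by contradiction at the level of limit points \textendash\ assuming $\orbit{\ctime_{\run}}$ converges to a non-critical point, it uses continuity of $\nabla\obj$ to get a uniform gradient lower bound on a neighborhood, the speed bound $\dnorm{\dorbit{\ctime}}\leq\lips$ to guarantee the orbit lingers in that neighborhood for a uniform time $\tau$ after each $\ctime_{\run}$, and then sums the resulting per-interval decreases to force $\obj(\orbit{\ctime})\to-\infty$, contradicting boundedness below on the compact sublevel set. You instead globalize first: from the dissipation identity you extract $\int_{0}^{\infty}\dnorm{\nabla\obj(\orbit{\ctimealt})}^{2}\dd\ctimealt<\infty$ and invoke Barbalat's lemma to conclude $\dnorm{\nabla\obj(\orbit{\ctime})}\to0$ pointwise, after which convergence to $\sols$ is routine. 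Your route buys a slightly stronger conclusion (the gradient norm itself vanishes along the orbit, not merely that all limit points are critical) at the cost of using the smoothness constant $\smooth$ to certify uniform continuity of $\ctime\mapsto\dnorm{\nabla\obj(\orbit{\ctime})}^{2}$; the paper's localized argument only needs continuity of $\nabla\obj$ plus the gradient bound, so it is marginally more economical in its use of \cref{asm:reg}. Both arguments handle the boundedness of the orbit identically via \cref{asm:sublvl}, and your treatment of the degenerate case $\obj(\orbit{\cstart})=\sup\obj$ is a small point the paper glosses over.
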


\begin{proof}[Proof of \cref{prop:conv-GD}]
To begin, existence and uniqueness of (global) solutions to \eqref{eq:GD} follows readily from the Picard\textendash Lindelöf theorem \citep{Tes12} and \cref{asm:reg}.
With this point settled,
and given that the sublevel sets of $\obj$ are bounded (\cf \cref{asm:sublvl}), the fact that $\obj$ is non-increasing along the orbits of \eqref{eq:GD} shows that $\orbit{\ctime}$ converges to some compact invariant set $\cpt\subseteq\points$.

Suppose now that there exists a sequence of times $\ctime_{\run}$, $\run=\running$, such that $\orbit{\ctime_{\run}}$ converges to some \emph{non-critical} point $\olim \notin \crit(\obj) \equiv \sols$.
Letting $\const = \dnorm{\nabla\obj(\olim)}^{2} > 0$, there exists a neighborhood $\nhd$ of $\olim$ such that $\dnorm{\nabla\obj(\point)}^{2} \geq \const/2$ for all $\point\in\nhd$ (again, by \cref{asm:reg}).
Hence, by passing to a subsequence if necessary, we can assume without loss of generality that $\orbit{\ctime_{\run}} > \const/2$ for all $\run$.
Furthermore, by \cref{asm:reg} (which implies that $\norm{\nabla\obj(\point)} \leq \gbound$ for all $\point\in\points$) and the definition of \eqref{eq:GD}, we have
\begin{equation}
\norm{\orbit{\ctime_{\run} + \tau} - \orbit{\ctime_{\run}}}
	\leq \int_{\ctime_{\run}}^{\ctime_{\run} + \tau}
		\dnorm{\nabla\obj(\orbit{\ctimealt})}
		\dd\ctimealt
	\leq \vbound \tau
\end{equation}
for all $\tau > 0$.
Therefore, by picking $\tau$ sufficiently small, we can assume that $\orbit{\ctime} \in \nhd$ for all $\ctime \in [\ctime_{\run},\ctime_{\run} + \tau]$ and all $\run$ (recall here that $\orbit{\ctime_{\run}} \in \nhd$ for all $\run$).
Then, by the definition of $\nhd$, we readily get
\begin{align}
\obj(\orbit{\ctime_{\run} + \tau})
	&- \obj(\orbit{\ctime_{\run}})
	= \int_{\ctime_{\run}}^{\ctime_{\run} + \tau} \ddt \obj(\orbit{\ctimealt}) \dd\ctimealt
	= -\int_{\ctime_{\run}}^{\ctime_{\run} + \tau} \dnorm{\nabla\obj(\orbit{\ctimealt})}^{2} \dd\ctimealt
	\leq - \frac{\const\tau}{2},
\end{align}
and hence:
\begin{align}
\obj(\orbit{\ctime_{\run} + \tau}) - \obj(\orbit{\cstart})
	&= -\int_{0}^{\ctime_{\run} + \tau}
			\dnorm{\nabla\obj(\orbit{\ctimealt})}^{2}
		\dd\ctimealt
	\leq -\sum_{\runalt=\start}^{\run}
		\int_{\ctime_{\runalt}}^{\ctime_{\runalt} + \tau}
			\dnorm{\nabla\obj(\orbit{\ctimealt})}^{2}
		\dd\ctimealt
	\notag\\
	&\leq -\sum_{\runalt=\start}^{\run} \frac{\const \tau}{2}
	= -\frac{\run\const\tau}{2}
\end{align}
\ie $\lim_{\run\to\infty} \obj(\orbit{\ctime_{\run} + \tau}) = -\infty$, a contradiction.
Since $\orbit{\ctime}$ converges to a compact invariant set $\cpt$, we conclude that $\orbit{\ctime}$ in fact converges to the critical set $\sols$ of $\obj$.
\end{proof}

\section{Stability and boundedness of \acp{APT}}
\label{app:stability}

\subsection{Discrepancies between flows and \acp{APT}}
\label{sec:APT-bad}

Our first goal in this appendix is to provide a concrete example where \aclp{APT} and the underlying continuous-time flow exhibit qualitatively different behaviors in the long run.
To that end, consider the autonomous \acs{ODE}
\begin{equation}
\label{eq:ODE-bad}
\dorbit[1]{\ctime}
	= 1
	\qquad
\dorbit[2]{\ctime}
	= -\frac{\orbit[2]{\ctime}}{1 + \orbit[1]{\ctime}}
\end{equation}
which is a pseudo-gradient flow of the function $\obj(\point_{1},\point_{2}) = \point_{2}^{2}/2 - \point_{1}$.
The general solution of this system with initial condition $(0,b)$ at time $\ctime=\cstart$ is
\begin{equation}
\orbit{\ctime}
	= \parens*{\ctime,\frac{b}{1+\ctime}}.
\end{equation}
As a result, we have $\orbit[2]{\ctime}\to0$ as $t\to\infty$ from any initial condition (for a graphical illustration, see \cref{fig:APT}).


\begin{figure}[t]
\centering
\includegraphics[width=8cm]{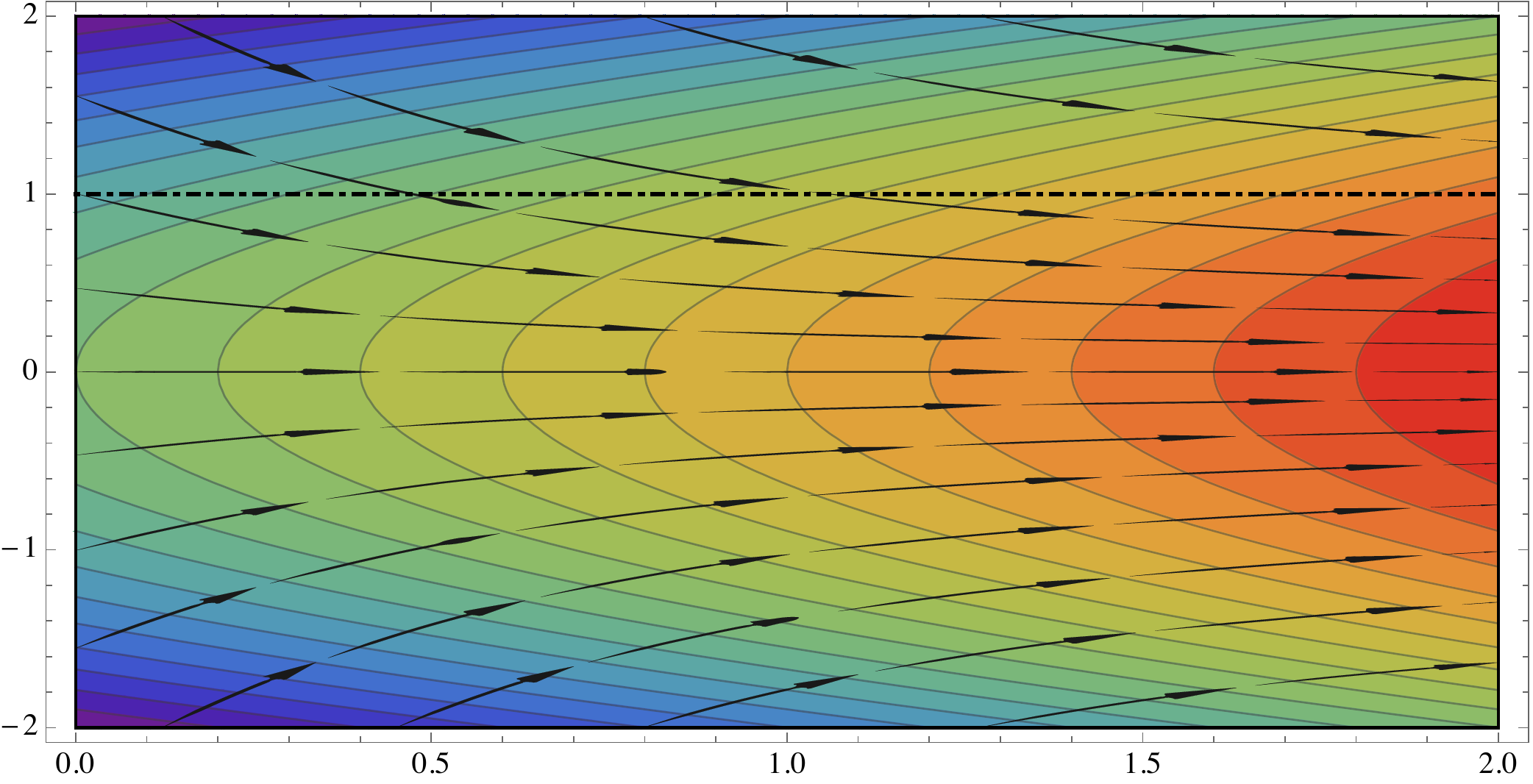}
\caption{Flowlines \vs \aclp{APT}:
the dashed black line is an \ac{APT} of the depicted gradient flow but it stays at a constant height ($\point_{2} = 1$), even though all flow lines converge to the $\point_{1}$-axis ($\point_{2}=0$).}
\label{fig:APT}
\end{figure}


On the other hand, as we show below, the ``constant height'' curve $\apt{\ctime} = (\ctime,1)$ is an \acl{APT} of \eqref{eq:ODE-bad}.
To show this, fix some accuracy threshold $\eps>0$ and a horizon $\horizon>0$.
Then, with respect to \Cref{def:APT}, it suffices to show that, for some sufficiently large $\ctime_{0}>0$ and all $h\in[0,\horizon]$, we have
\begin{equation}
\abs{1 - \orbit[2]{\ctime_{0} + h}}
	\leq \eps
\end{equation}
for the solution trajectory $\orbit{\ctime} = (\orbit[1]{\ctime},\orbit[2]{\ctime})$ that passes through the point $(\ctime_{0},1)$ at time $\ctime_{0}$.%
\footnote{That this is so is a consequence of the fact that the trajectories of \eqref{eq:ODE-bad} intersect the line $\point_{2}=1$ at a vanishing angle as $\ctime\to\infty$.
More precisely, if we show the statement in question for $\ctime_{0}$, it will also hold for all $\tau \geq \ctime_{0}$ by virtue of the monotonicity of the exponential function.}

Substituting in the general solution of \eqref{eq:ODE-bad} and backsolving, we readily obtain that this trajectory has
\begin{equation}
\orbit[2]{\ctime}
	= \frac{1 + \ctime_{0}}{1 + \ctime}.
\end{equation}
In turn, this implies that the maximal difference between $1$ and $\orbit[2]{\ctime}$ over a window of size $\horizon$ starting at $\ctime_{0}$ is
\begin{align}
\max_{0\leq h \leq \horizon} \abs{1 - \orbit[2]{\ctime_{0} + h}}
	&= \frac{1 + \ctime_{0} + \horizon}{1 + \ctime_{0}} - 1
	= \frac{\horizon}{1 + \ctime_{0}}
	\leq \eps
\end{align}
if $\ctime_{0}$ is chosen sufficiently large \textendash\ specifically, if $\ctime_{0} \geq \horizon / \eps - 1$.

Since $\eps$ is arbitrary, the above shows that the \ac{APT} condition \eqref{eq:APT} holds for all $\horizon>0$, \ie $\apt{\ctime}$ is an \ac{APT} of \eqref{eq:ODE-bad}.
On the other hand, we have $\lim_{\ctime\to\infty} \apt[2]{\ctime} = 1$, which is different than the limit of \emph{any} solution of \eqref{eq:ODE-bad}.

\subsection{Boundedness of \acp{APT}}
\label{sec:APT-bounded}

Our aim in the rest of this appendix will be to prove \cref{thm:stability}, which, for convenience, we restate below:

\stability*

To begin, we recall the basic \ac{APT} property of \eqref{eq:SGD}:

\propAPT*

The proof of \cref{prop:APT} follows by a tandem application of Propositions 4.1 and 4.2 of \citet{Ben99}, so we omit it;
instead, we focus directly on the proof of \cref{thm:stability}.
To that end, as we explained in the main body of the paper, the first part of our proof consists of showing that \eqref{eq:SGD} admits a subsequence converging to $\sols$, \ie that $\liminf_{\run\to\infty} \dist(\curr,\sols) = 0$:

\begin{lemma}
\label{lem:subsequence}
With assumptions as in \cref{thm:stability}, there exists a \textpar{possibly random} subsequence $\state_{\run_{\runalt}}$ of $\curr$ that converges to $\sols$;
formally, $\liminf_{\run\to\infty} \dist(\curr,\sols) = 0$ \as.
\end{lemma}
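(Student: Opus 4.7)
The plan is to argue by contradiction. Suppose that on some event $A$ with $\prob(A) > 0$ one has $\liminf_{\run \to \infty} \dist(\curr, \sols) > 0$; by writing $A = \bigcup_{\runalt} (A \cap \{\liminf_{\run} \dist(\curr, \sols) > 1/\runalt\})$ and intersecting with the probability-one \ac{APT} event supplied by \cref{prop:APT}, I may fix a \emph{deterministic} $\delta > 0$ and a sub-event (still with positive probability, still denoted $A$) on which: (i) $\dist(\curr, \sols) \geq \delta$ for all $\run$ large enough; and (ii) $\apt{\ctime}$ is an \ac{APT} of \eqref{eq:GD}. The goal is to derive from (i)–(ii) that $\obj(\apt{\ctime}) \to -\infty$ as $\ctime \to \infty$, contradicting the boundedness of $\obj$ from below — a direct consequence of \cref{asm:sublvl}, since any nonempty sublevel $\flvl_{c}$ with $c < \sup\obj$ is closed and bounded, hence compact, so $\inf\obj = \min_{\flvl_{c}} \obj > -\infty$.

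The crucial intermediate step is to establish a uniform positive lower bound on $\norm{\nabla\obj}$ away from $\sols$. Setting $U = \setdef{\point\in\points}{\dist(\point, \sols) \geq \delta/2}$, the set $\glvl_{\eps} = \setdef{\point}{\norm{\nabla\obj(\point)} \leq \eps}$ furnished by \cref{asm:gradlvl} is bounded, so $U \cap \glvl_{\eps}$ is compact and disjoint from $\sols$; continuity of $\nabla\obj$ then produces a positive minimum $\const_{1} > 0$ of $\norm{\nabla\obj}$ on $U \cap \glvl_{\eps}$, while $\norm{\nabla\obj} > \eps$ on $U \setminus \glvl_{\eps}$. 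Therefore $\norm{\nabla\obj(\point)} \geq \const := \min(\const_{1},\eps) > 0$ for every $\point \in U$.

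I would then leverage the \ac{APT} property to transfer a continuous-time descent estimate to the interpolation $\apt{\ctime}$. Fix a window length $\horizon > 0$ with $\lips \horizon < \delta/4$; since $\norm{\nabla\obj} \leq \lips$ by \cref{asm:reg}, any solution of \eqref{eq:GD} starting from $\point$ with $\dist(\point,\sols)\geq\delta$ satisfies $\norm{\flow{\ctimealt}{\point} - \point} \leq \lips \ctimealt \leq \delta/4$ for $\ctimealt \in [0,\horizon]$, so $\flow{\ctimealt}{\point}\in U$ throughout the window and consequently $\obj(\flow{\horizon}{\point}) \leq \obj(\point) - \const^{2}\horizon$. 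On the other hand, the \ac{APT} property over horizon $\horizon$, combined with the $\lips$-Lipschitz continuity of $\obj$, yields $\abs{\obj(\apt{\ctime + \horizon}) - \obj(\flow{\horizon}{\apt{\ctime}})} \to 0$ as $\ctime\to\infty$. Hence, on $A$ and for $\ctime$ large enough, $\obj(\apt{\ctime + \horizon}) \leq \obj(\apt{\ctime}) - \const^{2}\horizon/2$; iterating this bound over successive windows of length $\horizon$ drives $\obj(\apt{\ctime}) \to -\infty$, the desired contradiction.

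The main technical obstacle is ensuring that the gradient lower bound persists along the \emph{entire} continuous-time window, not merely at its endpoints; this is what ties the step-size condition, the bounded-gradient estimate of \cref{asm:reg}, and the gradient sublevel boundedness of \cref{asm:gradlvl} together. A secondary subtlety is that the \ac{APT} property controls trajectory distances but not $\obj$-values directly, a gap that is closed by the Lipschitz continuity of $\obj$ from \cref{asm:reg}; this is also precisely why a pathological example like the one sketched in \cref{sec:APT-bad} is not an issue here — in that example there is no descent-inducing Lyapunov function to exploit.
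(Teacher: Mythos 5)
Your argument is correct, but it takes a genuinely different route from the paper's. The paper stays entirely in discrete time: it applies the $\smooth$-smoothness descent inequality to $\obj(\next)$, telescopes, and then shows that the three normalized error terms vanish almost surely via the strong law of large numbers for martingale difference sequences, a deterministic step-size computation, and a submartingale-convergence-plus-H\"older argument (the latter being where the condition $\pexp>2/(\qexp+2)$ enters). You instead invoke \cref{prop:APT} as a black box and transfer a uniform continuous-time descent estimate over fixed windows to the interpolation $\apt{\ctime}$, iterating to force $\obj\to-\infty$; your compactness argument for the uniform gradient lower bound on $U$ is exactly the content of the paper's \cref{lem:smallgrad}, proved there by a sequential contradiction instead. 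Your route is shorter and concentrates all the probabilistic work inside \cref{prop:APT} (whose proof, via Propositions 4.1--4.2 of Bena\"im, contains essentially the same martingale estimates the paper carries out by hand); the paper's route is self-contained at the level of the iterates and produces the explicit moment bounds that the step-size condition is calibrated against. Two small points you should patch: (i) your hypothesis controls $\dist(\curr,\sols)$ only at the discrete iterates, whereas the windowed iteration needs $\dist(\apt{\ctime},\sols)\geq\delta$ at arbitrary window start times $\ctime$ --- this follows because $\norm{\next-\curr}=\curr[\step]\norm{\curr[\signal]}\to0$ almost surely under the stated step-size and moment conditions (or simply by shrinking $\delta$ by a factor of two); and (ii) you should make explicit that \cref{prop:APT} is logically prior to this lemma in the paper's architecture, so there is no circularity in using it here.
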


Before proving \cref{lem:subsequence}, we will require an intermediate result:

\begin{lemma}
\label{lem:smallgrad}
Let $\closed$ be a closed subset of $\points$ such that $\sols\cap\closed = \varnothing$.
Then, under \cref{asm:gradlvl}, $\inf_{\point\in\closed} \norm{\nabla\obj(\point)} > 0$.
\end{lemma}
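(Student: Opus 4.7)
The plan is to argue by contradiction, exploiting the boundedness of a gradient sublevel set (\cref{asm:gradlvl}) together with the continuity of $\nabla\obj$ (which follows from the smoothness in \cref{asm:reg}).

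Suppose, toward a contradiction, that $\inf_{\point\in\closed}\norm{\nabla\obj(\point)} = 0$. Then there exists a sequence $\point_{\run}\in\closed$ with $\norm{\nabla\obj(\point_{\run})} \to 0$ as $\run\to\infty$. In particular, for all sufficiently large $\run$, we have $\norm{\nabla\obj(\point_{\run})} \leq \eps$, where $\eps>0$ is the constant provided by \cref{asm:gradlvl}. Consequently, the tail of $\point_{\run}$ lies in $\glvl_{\eps}$, which is bounded by hypothesis, so after passing to a subsequence we may assume $\point_{\run}\to\point_{\infty}$ for some $\point_{\infty}\in\points$.

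Since $\closed$ is closed, $\point_{\infty}\in\closed$. On the other hand, $\nabla\obj$ is continuous by \cref{asm:reg}, so $\nabla\obj(\point_{\infty}) = \lim_{\run\to\infty} \nabla\obj(\point_{\run}) = 0$, meaning $\point_{\infty}\in\sols$. This yields $\point_{\infty}\in\sols\cap\closed$, contradicting the assumption $\sols\cap\closed=\varnothing$.

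The argument is essentially a one-shot compactness extraction, so I do not anticipate any real obstacle; the only subtle point is recognizing that the hypothesis ``$\glvl_{\eps}$ bounded for \emph{some} $\eps>0$'' is exactly what forces the minimizing sequence to be precompact, whereupon continuity of $\nabla\obj$ and closedness of $\closed$ finish the job.
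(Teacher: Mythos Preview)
Your proof is correct and essentially matches the paper's own argument: both proceed by contradiction, extract a convergent subsequence from a minimizing sequence using the boundedness of $\glvl_{\eps}$ (\cref{asm:gradlvl}), and then derive a contradiction from continuity of $\nabla\obj$ and closedness of $\closed$. The only cosmetic difference is that the paper phrases the argument as a case split (either the sequence has a convergent subsequence, or it escapes to infinity, the latter contradicting \cref{asm:gradlvl}), whereas you use \cref{asm:gradlvl} up front to guarantee precompactness directly.
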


\begin{proof}
Arguing by contradiction, assume there exists some sequence $\curr[\point] \in \closed$ such that $\norm{\nabla\obj(\curr[\point])} \to 0$ as $\run\to\infty$.
If $\curr[\point]$ admits a subsequence converging to some limit point $\olim\in\closed$, then, by continuity (recall that $\obj$ is assumed $C^{\vdim}$), we would also have $\norm{\nabla\obj(\olim)} = 0$.
In turn, this would imply $\olim\in\sols$, contradicting the assumption that $\closed$ is closed and disjoint from $\sols$.

Therefore, to prove our claim, it suffices to examine the case where $\curr[\point]$ has no convergent subsequence, \ie $\liminf_{\run\to\infty} \norm{\curr[\point]} = \infty$.
However, this would mean that the gradient sublevel set $\glvl_{\eps} = \setdef{\point\in\points}{\dnorm{\nabla\obj(\point)} \leq \eps}$ is unbounded for all $\eps>0$, in contradiction to \cref{asm:gradlvl}.
We conclude that $\liminf_{\run\to\infty} \norm{\nabla\obj(\curr[\point])} > 0$ for every sequence $\curr[\point]$ in $\closed$, \ie $\liminf_{\point\in\closed} \norm{\nabla\obj(\point)} > 0$.
\end{proof}

\begin{proof}[Proof of \cref{lem:subsequence}]
Assume ad absurdum that the event
\begin{equation}
\samples_{0}
	= \{\liminf\nolimits_{\run\to\infty} \dist(\curr,\sols) > 0\}
\end{equation}
occurs with positive probability.
By \cref{lem:smallgrad}, if $\liminf_{\run\to\infty} \dist(\curr,\sols) > 0$, we must also have $\liminf_{\run\to\infty} \norm{\nabla\obj(\curr)} > 0$ (since $\curr$ will eventually be contained in a closed set that is disjoint from $\sols$).
Therefore, fixing a realization $\curr$, $\run=\running$, of \eqref{eq:SGD} such that $\samples_{0}$ holds, there exists some (random) positive constant $\const>0$ with $\norm{\obj(\curr)}^{2} \geq \const$ for all sufficiently large $\run$;
without loss of generality, we may \textendash\ and will \textendash\ assume in the sequel that this actually holds for all $\run\geq\start$.

In view of all this, by the smoothness assumption for $\obj$ and the definition of \eqref{eq:SGD} we readily get:
\begin{align}
\obj(\next)
	= \obj(\curr - \curr[\step]\curr[\signal])
	&\leq \obj(\curr)
		- \curr[\step] \braket{\nabla\obj(\curr)}{\curr[\signal]}
		+ \frac{\smooth}{2} \curr[\step]^{2} \norm{\curr[\signal]}^{2}
	\notag\\
	&= \obj(\curr)
		- \curr[\step] \norm{\nabla\obj(\curr)}^{2}
		- \curr[\step] \braket{\nabla\obj(\curr)}{\curr[\noise]}
		+ \frac{\smooth}{2} \curr[\step]^{2} \norm{\curr[\signal]}^{2}
	\notag\\
	&\leq \obj(\curr)
		- \curr[\step] \const
		- \curr[\step] \curr[\snoise]
		+ \curr[\step]^{2} \smooth \norm{\nabla\obj(\curr)}^{2}
		+ \curr[\step]^{2} \smooth \norm{\curr[\noise]}^{2},
\end{align}
where we set $\curr[\snoise] = \braket{\nabla\obj(\curr)}{\curr[\noise]}$.
Therefore, setting $\curr[\obj] = \obj(\curr)$ and telescoping, we obtain
\begin{equation}
\label{eq:energy-bound1}
\next[\obj]
	\leq \init[\obj]
		- \curr[\proper]
			\bracks[\Bigg]{
				\const
				+ \underbrace{\frac{\sum_{\runalt=\start}^{\run} \iter[\step] \iter[\snoise]}{\curr[\proper]}}_{\curr[A]}
				- \smooth \underbrace{\frac{\sum_{\runalt=\start}^{\run} \iter[\step]^{2} \norm{\nabla\obj(\iter[\state])}^{2}}{\curr[\proper]}}_{\curr[B]}
				- \smooth \underbrace{\frac{\sum_{\runalt=\start}^{\run} \iter[\step]^{2} \norm{\iter[\noise]}^{2}}{\curr[\proper]}}_{\curr[C]}
				}, 
\end{equation}
where $\curr[\proper] = \sum_{\runalt=\start}^{\run} \iter[\step]$ is the ``elapsed time'' of $\curr$ as defined in \cref{sec:SA}.
We will proceed to show that all the summands in the brackets of \eqref{eq:energy-bound1} except the first converge to $0$;
since $\const>0$ and $\curr[\proper]\uparrow\infty$, this will show that $\lim_{\run\to\infty} \curr[\obj] = -\infty$, in direct contradiction to \cref{asm:sublvl}.

We carry out this plan term-by-term below:
\begin{enumerate}

\item
For the first term ($\curr[A]$), note that
\begin{equation}
\exof{\curr[\snoise] \given \curr[\filter]}
	= \exof{\braket{\nabla\obj(\curr)}{\curr[\noise]} \given \curr[\filter]}
	= \braket{\nabla\obj(\curr)}{\exof{\curr[\noise] \given \curr[\filter]}}
	= 0
\end{equation}
by \cref{asm:noise}.
This means that $\sum_{\runalt=\start}^{\run} \iter[\step] \iter[\snoise]$ is a zero-mean martingale, so, by the law of large numbers for martingale difference sequences \citep[Theorem 2.18]{HH80}, we have $\curr[\step]^{-1} \sum_{\runalt=\start}^{\run} \iter[\step] \iter[\snoise] \to 0$ \as on the event
\begin{equation}
\label{eq:cond-LLN}
\samples_{1}
	= \braces*{\sum_{\run=\start}^{\infty} \frac{\curr[\step]^{2}}{\curr[\proper]^{2}} \exof{\curr[\snoise]^{2} \given \curr[\filter]}
		< \infty}.
\end{equation}
However, by \cref{asm:reg,asm:noise}, we have
\usetagform{comment}
\begin{align}
\exof{\curr[\snoise]^{2} \given \curr[\filter]}
	&= \exof{\braket{\nabla\obj(\curr)}{\curr[\noise]}^{2} \given \curr[\filter]}
	\notag\\
	&\leq \gbound^{2} \exof{\norm{\curr[\noise]}^{2} \given \curr[\filter]}
	\tag{by \cref{asm:reg}}\\
	&\leq \gbound^{2} \exof{\norm{\curr[\noise]}^{\qexp} \given \curr[\filter]}^{2/\qexp}
	\tag{by Jensen}\\
	&\leq \gbound^{2} \noisevar
	\tag{by \cref{asm:noise}}
\end{align}
\usetagform{default}%
where, in the second-to-last line, we applied Jensen's inequality to the function $z \mapsto z^{\qexp/2}$ (recall here that $\qexp\geq2$).
Moreover, for all $\pexp\in(0,1]$, we have $\curr[\step]^{2} / \curr[\proper]^{2} = \tilde\bigoh(1/\run^{2})$, so $\sum_{\run=\start}^{\infty} \curr[\step]^{2} \big/ \curr[\proper]^{2} < \infty$.
Thus, going back to \eqref{eq:cond-LLN}, we conclude that $\curr[\step]^{-1} \sum_{\runalt=\start}^{\run} \iter[\step] \iter[\snoise] \to 0$ with probability $1$.

\item
For the second term ($\curr[B]$), simply note that $\norm{\nabla\obj(\curr)}^{2} \leq \gbound^{2}$, so we have:
\begin{equation}
\curr[B]
	= \frac{\sum_{\runalt=\start}^{\run} \iter[\step]^{2} \norm{\nabla\obj(\iter[\state])}^{2}}{\curr[\proper]}
	= \begin{cases}
		\bigoh(1/\run^{\pexp})
			&\quad
			\text{if $0<\pexp<1/2$},
			\\[\smallskipamount]
		\bigoh(\log\run/\sqrt{\run})
			&\quad
			\text{if $\pexp=1/2$},
			\\[\smallskipamount]
		\bigoh(1/\run^{1-\pexp})
			&\quad
			\text{if $1/2<\pexp<1$},
			\\[\smallskipamount]
		\bigoh(1/\log\run)
			&\quad
			\text{if $\pexp=1$}.
	\end{cases}
\end{equation}
Thus, from the above, we conclude that $\curr[B]\to0$.

\item
For the third term ($\curr[C]$), we will require a series of estimates.
First, with a fair degree of hindsight, let $\curr[Q] = \sum_{\runalt=\start}^{\run} \iter[\step]^{1+\qexp/2} \norm{\iter[\noise]}^{\qexp}$.
Noting that $\exof{\norm{\curr[\noise]}^{\qexp}} < \infty$ \as and $\exof{\curr[Q] \given \curr[\filter]} = \prev[Q] + \curr[\step]^{1+\qexp/2} \norm{\curr[\noise]}^{\qexp} \geq \prev[Q]$ for all $\run=\running$, we deduce that $\curr[Q]$ is a submartingale.
Furthermore, we have:
\begin{align}
\exof*{\sum_{\run=\start}^{\infty} \curr[\step]^{1+\qexp/2} \norm{\curr[\noise]}^{\qexp}}
	&\leq \sum_{\run=\start}^{\infty} \curr[\step]^{1+\qexp/2} \exof{\norm{\curr[\noise]}^{\qexp} }
	\leq \sum_{\run=\start}^{\infty} \curr[\step]^{1+\qexp/2} \noisepar^{\qexp}
	\notag\\
	&= \bigoh\parens*{\sum_{\run=\start}^{\infty} \run^{-\frac{\pexp(\qexp + 2)}{2}}}
	< \infty,
\end{align}
\ie $\curr[Q]$ is bounded in $L^{1}$ (recall that $\pexp > 2/(\qexp+2)$ by assumption).
Hence, by Doob's submartingale convergence theorem \citep[Theorem 2.1]{HH80}, it follows that $\curr[Q]$ converges \as to a random variable $Q_{\infty}$ with $\exof{Q_{\infty}} < \infty$ (and hence $Q_{\infty} < \infty$ with probability $1$ as well).

To proceed, we will need to consider two cases, depending on whether $\qexp=2$ or $\qexp>2$.
For the latter (which is more difficult), we will require the following variant of Hölder's inequality:
\begin{equation}
\parens*{\sum_{\runalt=\start}^{\run} \iter[\alpha] \iter[\beta]}^{\rexp}
	\leq
		\parens*{\sum_{\runalt=\start}^{\run} \iter[\alpha]^{\frac{\delta\rexp}{\rexp-1}}}^{\rexp-1}
		\sum_{\runalt=\start}^{\run} \iter[\alpha]^{(1-\delta)\rexp} \iter[\beta]^{\rexp},
\end{equation}
valid for all $\rexp>1$ and all $\delta\in(0,1)$.
Then, applying this inequality with $\iter[\alpha] = \iter[\step]^{2}$, $\iter[\beta] = \norm{\iter[\noise]}^{2}$, $\rexp = \qexp/2$ and $\delta = (\qexp-2)/(2\qexp)$, we obtain:
\begin{equation}
\parens*{\sum_{\runalt=\start}^{\run} \iter[\step]^{2} \norm{\iter[\noise]}^{2}}^{\qexp/2}
	\leq \parens*{ \sum_{\runalt=\start}^{\run} \iter[\step] }^{\qexp/2 - 1}
		\sum_{\runalt=\start}^{\run} \iter[\step]^{1+\qexp/2} \norm{\iter[\noise]}^{\qexp}
	= \curr[\proper]^{\qexp/2 - 1} \curr[Q],
\end{equation}
and hence:
\begin{equation}
\curr[C]
	= \frac{\sum_{\runalt=\start}^{\run} \iter[\step]^{2} \norm{\iter[\noise]}^{2}}{\curr[\proper]}
	\leq \frac{\curr[\proper]^{1 - 2 / \qexp} \curr[Q]^{2/\qexp}}{\curr[\proper]} 
	= \frac{\curr[Q]^{2/\qexp}}{\curr[\proper]^{2/\qexp}}.
\end{equation}
Since $\qexp>2$ and $\curr[Q]$ converges \as to $Q_{\infty}$, it follows that $\lim_{\run\to\infty} \curr[C] = 0$  with probability $1$ (since $\lim_{\run\to\infty} \curr[\proper] = \infty$ by our assumptions for $\curr[\step]$).
Finally, if $\qexp=2$, we have $\curr[C] = \curr[Q] / \curr[\proper]$ by definition, so we get $\curr[C] \to 0$ \as directly.
\end{enumerate}

Putting together all of the above, we get $\curr[A] + \curr[B] + \curr[C] \to 0$ with probability $1$, and hence, with probability $1$ conditioned on $\samples_{0}$ (since $\probof{\samples_{0}} > 0$).
This means that, for sufficiently large $\run$, we have
\begin{equation}
\next[\obj]
	\leq \init[\obj]
		- \curr[\proper] (\const/2)
\end{equation}
which, together with the fact that $\lim_{\run\to\infty} \curr[\proper] = \infty$, implies that $\lim_{\run\to\infty} \obj(\curr) = -\infty$.
This contradicts \cref{asm:sublvl} and completes our proof.
\end{proof}

We now move on to the deterministic elements of the proof of \cref{thm:stability}.
To that end, let
\begin{equation}
\label{eq:critval}
\critval
	= \max_{\point\in\sols} \obj(\point)
\end{equation}
denote the maximum value of $\obj$ over its critical set, and let
\begin{equation}
\label{eq:critnhd}
\critnhd_{\eps}
	= \flvl_{\critval+\eps}
	= \setdef{\point\in\points}{\obj(\point) \leq \critval + \eps}
\end{equation}
denote the $(\critval+\eps)$-sublevel set of $\obj$.
We then have the following ``uniform decrease'' estimate:

\begin{lemma}
\label{lem:decrease}
Fix some $\eps>0$.
Under \cref{asm:reg,asm:sublvl,asm:gradlvl}, there exists some $\tau \equiv \tau(\eps)$ such that, for all $\point\in\points$, we have
\begin{enumerate*}
[\upshape(\itshape i\hspace*{.5pt}\upshape)]
\item
$\obj(\flow{\tau}{\point}) \leq \obj(\point) - \eps$;
or
\item
$\flow{\tau}{\point} \in \critnhd_{\eps}$.
\end{enumerate*}
\end{lemma}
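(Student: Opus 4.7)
The plan is to exploit \cref{lem:smallgrad} to obtain a uniform lower bound on $\norm{\nabla\obj}$ away from $\critnhd_{\eps}$, and then to integrate along the flow to get a uniform descent estimate.

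First, I would introduce the closed ``high energy'' set
\begin{equation}
\closed_{\eps}
    = \setdef{\point\in\points}{\obj(\point) \geq \critval + \eps},
\end{equation}
and observe that $\closed_{\eps} \cap \sols = \varnothing$ (since $\obj \leq \critval$ everywhere on $\sols$ by the definition of $\critval$) while $\closed_{\eps}$ is closed by continuity of $\obj$. Consequently, \cref{lem:smallgrad} (whose hypotheses use \cref{asm:gradlvl}) yields a constant
\begin{equation}
\mu
    \equiv \mu(\eps)
    = \inf_{\point\in\closed_{\eps}} \dnorm{\nabla\obj(\point)}
    > 0.
\end{equation}
With this in hand, define the candidate time window as $\tau \equiv \tau(\eps) = \eps / \mu^{2}$.

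Next I would perform the case analysis, fixing $\point\in\points$ and writing $\orbit{\ctime} = \flow{\ctime}{\point}$. If $\orbit{\ctime}\in\closed_{\eps}$ for every $\ctime\in[0,\tau]$, the chain rule together with \eqref{eq:GD} gives
\begin{equation}
\obj(\orbit{\tau}) - \obj(\point)
    = -\int_{0}^{\tau} \dnorm{\nabla\obj(\orbit{\ctimealt})}^{2} \dd\ctimealt
    \leq -\mu^{2} \tau
    = -\eps,
\end{equation}
establishing (i). Otherwise, there is some $\ctime^{\ast}\in[0,\tau]$ with $\obj(\orbit{\ctime^{\ast}}) < \critval + \eps$; but $\obj$ is non-increasing along the flow of \eqref{eq:GD}, so $\obj(\orbit{\tau}) \leq \obj(\orbit{\ctime^{\ast}}) < \critval + \eps$, \ie $\orbit{\tau} \in \critnhd_{\eps}$, giving (ii). The remaining situation $\point\notin\closed_{\eps}$ reduces to (ii) in the same way, by monotonicity of $\obj$ along orbits.

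I don't anticipate a genuine obstacle here: the main subtlety is to make sure $\tau$ depends only on $\eps$ (not on $\point$), which is exactly what the uniform lower bound from \cref{lem:smallgrad} provides; crucially, this uses \cref{asm:gradlvl} to rule out vanishing gradients at infinity, without which $\mu$ could be zero and the descent step would collapse.
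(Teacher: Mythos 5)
Your proposal is correct and follows essentially the same route as the paper: a uniform lower bound on the gradient norm away from $\critnhd_{\eps}$ via \cref{lem:smallgrad}, the choice $\tau = \eps/\mu^{2}$ (the paper's $\const$ is your $\mu^{2}$), integration of $d\obj/dt = -\norm{\nabla\obj}^{2}$ over $[0,\tau]$, and the same two-case split depending on whether the orbit enters $\critnhd_{\eps}$ before time $\tau$. Your use of the closed set $\closed_{\eps} = \{\obj \geq \critval + \eps\}$ is in fact a slightly cleaner way to invoke \cref{lem:smallgrad} (which is stated for closed sets) than the paper's direct appeal to the open complement $\points\setminus\critnhd_{\eps}$.
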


\begin{proof}
By \cref{lem:smallgrad}, there exists some positive constant $\const>0$ such that $\norm{\nabla\obj(\point)}^{2} \geq \const > 0$ for all $\point \in \points \setminus \critnhd_{\eps}$.
Then, with $d\obj/dt = - \norm{\nabla{\obj(\orbit{\ctime})}}^{2}$, if we let $\tau_{\point} = \inf\setdef{\ctime\geq0}{\flow{\ctime}{\point} \in \critnhd_{\eps}}$, we get:
\begin{equation}
\label{eq:decrease}
\obj(\flow{\ctime}{\point})
	= \obj(\point)
		- \int_{0}^{\ctime} \norm{\nabla\obj(\orbit{\ctimealt})}^{2} \dd\ctimealt
	\leq \obj(\point)
		- \const \ctime
	\quad
	\text{for all $\ctime\in[0,\tau_{\point}]$.}
\end{equation}
Accordingly, letting $\tau = \eps/\const$, we may consider the following two case:
\begin{enumerate}
\item
If $\tau_{\point} \geq \tau$, applying \eqref{eq:decrease} for $\ctime = \tau$ yields $\obj(\flow{\tau}{\point}) \leq \obj(\point) - \eps$.
\item
Otherwise, if $\tau_{\point} < \tau$, we have $\obj(\flow{\tau}{\point}) \leq \obj(\flow{\tau_{\point}}{\point}) \leq \critval + \eps$, implying in particular that $\flow{\tau}{\point} \in \critnhd_{\eps}$.
\end{enumerate}
Our claim then follows by combining the two cases above.
\end{proof}

Finally, we establish below the required comparison bound between an \ac{APT} of \eqref{eq:GD} and its solution trajectories:

\begin{lemma}
\label{lem:flowAPT}
Fix some $\eps,\delta>0$.
Then, with assumptions and notation as in \cref{lem:decrease}, there exists some $\ctime_{0} \equiv \ctime_{0}(\delta,\eps)$ such that, for all $\ctime \geq \ctime_{0}$ and all $h \in [0,\tau]$, we have:
\begin{equation}
\label{eq:flowAPT}
\obj(\apt{\ctime + h})
	\leq \obj(\flow{h}{\apt{\ctime}})
		+ \lips \delta
		+ \tfrac{1}{2} \smooth \delta^{2}.
\end{equation}
\end{lemma}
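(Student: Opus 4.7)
The plan is to combine the \ac{APT} approximation property with the smoothness of $\obj$ (\cref{asm:reg}), which controls how much the objective can change under small perturbations of its argument.

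First, I invoke the definition of an \ac{APT}. Since $\apt{\ctime}$ is an \ac{APT} of \eqref{eq:GD}, applying \cref{def:APT} with horizon $\horizon = \tau$ (the uniform window provided by \cref{lem:decrease}) and tolerance $\delta$, there exists $\ctime_{0} \equiv \ctime_{0}(\delta,\eps)$ such that
\begin{equation}
\sup_{0 \leq h \leq \tau} \norm{\apt{\ctime + h} - \flow{h}{\apt{\ctime}}} \leq \delta
\qquad
\text{for all $\ctime \geq \ctime_{0}$.}
\end{equation}
Note that $\ctime_{0}$ depends on $\eps$ only through $\tau = \tau(\eps)$.

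Second, I use the fact that \cref{asm:reg} makes $\obj$ both $\lips$-Lipschitz and $\smooth$-smooth, so the standard descent lemma applies: for any $\point,\pointalt \in \points$,
\begin{equation}
\obj(\pointalt) \leq \obj(\point) + \braket{\nabla\obj(\point)}{\pointalt - \point} + \tfrac{1}{2}\smooth \norm{\pointalt - \point}^{2}.
\end{equation}
Setting $\point = \flow{h}{\apt{\ctime}}$ and $\pointalt = \apt{\ctime + h}$, Cauchy\textendash Schwarz combined with $\norm{\nabla\obj(\point)} \leq \lips$ yields
\begin{equation}
\obj(\apt{\ctime + h}) \leq \obj(\flow{h}{\apt{\ctime}}) + \lips\,\norm{\apt{\ctime + h} - \flow{h}{\apt{\ctime}}} + \tfrac{1}{2}\smooth\,\norm{\apt{\ctime + h} - \flow{h}{\apt{\ctime}}}^{2}.
\end{equation}
Plugging in the uniform bound $\norm{\apt{\ctime + h} - \flow{h}{\apt{\ctime}}} \leq \delta$ for $\ctime \geq \ctime_{0}$ and $h \in [0,\tau]$ gives \eqref{eq:flowAPT}.

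There is no real obstacle here: the statement is essentially a clean packaging of the \ac{APT} shadowing bound into a functional estimate via smoothness. The only mild subtlety is to verify that the quantifier order is respected \textendash\ namely, $\tau$ is determined by $\eps$ alone in \cref{lem:decrease}, so choosing $\horizon = \tau(\eps)$ and tolerance $\delta$ in the \ac{APT} definition produces a single threshold $\ctime_{0}$ that depends on both parameters but is valid uniformly over $h \in [0,\tau]$, as required for the downstream iteration scheme in the proof of \cref{thm:stability}.
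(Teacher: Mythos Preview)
Your proof is correct and follows essentially the same argument as the paper: invoke the \ac{APT} definition with horizon $\tau$ to get the $\delta$-shadowing bound, then apply the descent lemma from $\smooth$-smoothness together with the $\lips$-Lipschitz bound on $\nabla\obj$ (via Cauchy\textendash Schwarz) to convert the state-space error into the desired functional estimate. The only cosmetic difference is that the paper writes out the first-order Taylor-type expansion explicitly rather than citing the descent lemma by name.
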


\begin{proof}
By the definition of an \ac{APT}, there exists some $\ctime_{0} \equiv \ctime_{0}(\delta,\eps)$ such that
\begin{equation}
\label{eq:APT-delta}
\sup_{0\leq h\leq \tau} \norm{\apt{\ctime + h} - \flow{h}{\apt{\ctime}}}
	\leq \delta
\end{equation}
for all $\ctime \geq \ctime_{0}$.
Hence, for all $\ctime\geq\ctime_{0}$ and all $h\in[0,\tau]$, we have
\begin{align}
\obj(\apt{\ctime+h})
	&= \obj(\flow{h}{\apt{\ctime}} + \apt{\ctime+h} - \flow{h}{\apt{\ctime}})
	\notag\\
	&\leq \obj(\flow{h}{\apt{\ctime}})
		+ \braket{\nabla\obj(\flow{h}{\apt{\ctime}})}{\apt{\ctime+h} - \flow{h}{\apt{\ctime}}}
	\notag\\
	&\qquad
		+ \frac{\smooth}{2} \norm{\apt{\ctime+h} - \flow{h}{\apt{\ctime}}}^{2}
	\notag\\
	&\leq \obj(\flow{h}{\apt{\ctime}})
		+ \gbound \norm{\apt{\ctime+h} - \flow{h}{\apt{\ctime}}}
		+ \frac{\smooth}{2} \norm{\apt{\ctime+h} - \flow{h}{\apt{\ctime}}}^{2}
	\notag\\
	&\leq \obj(\flow{h}{\apt{\ctime}})
		+ \gbound\delta
		+ \frac{\smooth}{2} \delta^{2},
\end{align}
as claimed.
\end{proof}
With all this in hand, we are finally in a position to formally prove \cref{thm:stability}.

\begin{proof}[Proof of \cref{thm:stability}]
We will prove the stronger statement that, with probability $1$, $\apt{\ctime}$ converges to the sublevel set $\flvl_{\critval} = \setdef{\point\in\points}{\obj(\point) \leq \critval}$ with $\critval$ defined as in \eqref{eq:critval}.
Since the sublevel sets of $\obj$ are bounded, convergence to $\flvl_{\critval}$ suffices.

To prove this claim, fix some $\eps>0$ and let $\apt{\ctime}$ be the affine interpolation of the sequence of iterates $\curr$ generated by \eqref{eq:SGD}.
Under the stated assumptions, \cref{prop:APT} guarantees that $\apt{\ctime}$ is an \ac{APT} of \eqref{eq:GD} with probability $1$.
Moreover, again with probability $1$, \cref{lem:subsequence} guarantees the existence of some (possibly random) $\ctime_{1}$ such that $\apt{\ctime_{1}} \in \critnhd_{2\eps}$.
To streamline the analysis to come, we will condition our statements on the intersection of these two events (which still occurs with probability $1$), and we will argue trajectory-wise.

Moving forward, \cref{lem:decrease} guarantees the existence of some $\tau \equiv \tau(\eps)$ such that $\obj(\flow{\tau}{\point}) \leq \obj(\point) - \eps$ or $\flow{\tau}{\point} \in \critnhd_{\eps}$ for all $\point\in\points$.
Fixing this $\tau$ and taking $\delta>0$ such that $\gbound\delta + \smooth\delta^{2}/2 < \eps$, \cref{lem:flowAPT} further implies that there exists some $\ctime_{0}$ such that \eqref{eq:flowAPT} holds for all $\ctime\geq\ctime_{0}$ and all $h\in[0,\tau]$.
Note also that, without loss of generality, we can assume that $\ctime_{1} > \ctime_{0}$;
otherwise, if this is not the case, it suffices to wait for the first instance $\run$ such that $\curr\in\critnhd_{2\eps}$ and $\curr[\proper] \geq \ctime_{0}$ (by \cref{lem:subsequence}, this occurs with probability $1$).

Combining all of the above, we have
\begin{enumerate*}
[(\itshape i\hspace*{.5pt}\upshape)]
\item
$\apt{\ctime_{1}} \in \critnhd_{2\eps}$;
and
\item
$\obj(\apt{\ctime+h}) \leq \obj(\flow{h}{\apt{\ctime}}) + \eps$ for all $\ctime\geq\ctime_{1}$ and all $h\in[0,\tau]$.
\end{enumerate*}
Since $\obj(\flow{\ctime}{\point}) \leq \obj(\point)$ for all $\ctime\geq0$, this further implies that
\begin{equation}
\obj(\apt{\ctime+h})
	\leq \obj(\apt{\ctime})
		+ \eps
\end{equation}
for all $h\in[0,\tau]$.
We thus get
\begin{equation}
\obj(\apt{\ctime})
	\leq \obj(\apt{\ctime_{1}})
		+ \eps \leq \critval + 3\eps
\end{equation}
for all $\ctime\in[\ctime_{1},\ctime_{1}+\tau]$.
Moreover, since $\apt{\ctime_{1}} \in \critnhd_{2\eps}$, \cref{lem:decrease} also gives $\flow{\tau}{\apt{\ctime_{1}}} \in \critnhd_{\eps}$ because the two conditions of the lemma coincide if $\point\in\critnhd_{2\eps}$.
As a result, we finally obtain
\begin{equation}
\obj(\apt{\ctime_{1}+\tau})
	\leq \obj(\flow{\tau}{\apt{\ctime_{1}}}) + \eps
	\leq \critval + \eps + \eps
	= \critval + 2\eps,
\end{equation}
\ie $\apt{\ctime_{1} + \tau} \in \critnhd_{2\eps}$.

From the above, we conclude that
\begin{enumerate*}
[(\itshape i\hspace*{.5pt}\upshape)]
\item
$\apt{\ctime} \in \critnhd_{3\eps}$ for all $\ctime\in[\ctime_{1},\ctime_{1}+\tau]$;
and, in particular,
\item
$\apt{\ctime_{1} + \tau} \in \critnhd_{2\eps}$.
\end{enumerate*}
Proceeding inductively, we get $\apt{\ctime} \in \critnhd_{3\eps}$ for all $\ctime\in[\ctime_{1}+(\runalt-1)\tau,\ctime_{1}+\runalt\tau]$, $\runalt=\running$, \ie $\apt{\ctime} \in \critnhd_{3\eps}$ for all $\ctime\geq\ctime_{1}$.
Since $\eps>0$ is arbitrary, this means that $\apt{\ctime}$ converges to $\critnhd_{0} \equiv \flvl_{\critval}$ as claimed.
\end{proof}

\section{Avoidance analysis}
\label{app:avoidance}

As we stated in the main body of the paper, the proof of \cref{thm:avoidance} will require two different threads of arguments:
\begin{enumerate*}
[\itshape a\upshape)]
\item
a series of probabilistic estimates to show that a certain class of stochastic processes avoids zero;
and
\item
the construction of a suitable (average) Lyapunov function that grows exponentially along the unstable directions of a strict saddle manifold.
\end{enumerate*}

\subsection{Probabilistic estimates}
\label{sec:Pemantle}


The probabilistic estimates that we will need date back to \citet{Pem90} and concern a class of stochastic processes defined as follows:
let $\curr[Y]$, $\run=\running$, be a sequence of $\curr[\filter]$-measurable random variables,
let $\curr[\lyap] = \sum_{\runalt=\start}^{\run} \iter[Y]$,
and assume that
\begin{equation}
\label{eq:energy-unst}
\exof{\next[\lyap]^{2} - \curr[\lyap]^{2} \given \curr[\filter]}
	\geq C/\run^{2\pexp}
	\quad
	\text{for some $C>0$ and all $\run=\running$}
\end{equation}
In the above,
$\curr[\lyap]$ will play the role of a ``distance measure'' from $\saddles$.
Informally, the requirement \eqref{eq:energy-unst} posits that $\curr[\lyap]$ increases in ``root mean square'' by $\Theta(\curr[\step])$ where $\curr[\step] \propto 1/\run^{\pexp}$ is the step-size of \eqref{eq:SGD};
constructing such a process will be the topic of the geometric constructions of the next section.
For now, we state without proof a number of conditions guaranteeing that the process $\curr[\lyap]$ cannot converge to $0$:

\begin{lemma}[$0 < \pexp \leq 1/2$; \citealp{BH96}, Lemma 4.2]
\label{lem:Pem-large}
Suppose that \eqref{eq:energy-unst} holds for some $\pexp\in(0,1/2]$.
Then, $\probof{\lim_{\run\to\infty} \curr[\lyap] = 0} = 0$.
\end{lemma}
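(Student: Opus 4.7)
The plan is to argue by contradiction: assume the event $\samples_{0} = \{\lim_{\run\to\infty} \curr[\lyap] = 0\}$ has positive probability and extract a contradiction from the one-sided second-moment bound. The natural first step is Doob's decomposition $\curr[\lyap]^{2} = M_{\run} + A_{\run}$, where $M_{\run}$ is a martingale and the predictable compensator
\begin{equation}
A_{\run}
    = \sum_{\runalt=\start}^{\run-1} \exof{\iter[\lyap]^{2} - \iter[\lyap][\runalt-1]^{2} \given \iter[\filter]}
\end{equation}
is non-decreasing and bounded below, by hypothesis, by $C \sum_{\runalt<\run} \runalt^{-2\pexp}$. Since $2\pexp \leq 1$, this lower bound diverges, so $A_{\run}\to\infty$ almost surely; in particular, if $\curr[\lyap]$ were eventually bounded with positive probability, then $M_{\run} = \curr[\lyap]^{2} - A_{\run}$ would have to drift to $-\infty$ on a set of positive measure, an obstruction we must now rule out cleanly.

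To turn this into an honest contradiction I would localize via a stopping time. Whenever $\probof{\samples_{0}} > 0$, continuity of measure yields $\eps>0$ and $\run_{0}\in\N$ with $\probof{\abs{\curr[\lyap]}\leq\eps\,\forall \run\geq\run_{0}} \geq \delta > 0$; set $\tau = \inf\{\run\geq\run_{0} : \abs{\curr[\lyap]}>\eps\}$, so $\probof{\tau=\infty}\geq\delta$. The submartingale inequality $\exof{\state_{(n+1)\wedge\tau}[\lyap]^{2} - \state_{n\wedge\tau}[\lyap]^{2} \given \curr[\filter]} \geq \one_{\{\tau>\run\}} C/\run^{2\pexp}$ (obtained by optional stopping) then gives, after taking expectations and telescoping,
\begin{equation}
\exof{\state_{\run\wedge\tau}[\lyap]^{2}}
    \geq \exof{\state_{\run_{0}}[\lyap]^{2}}
        + C \sum_{\runalt=\run_{0}}^{\run-1} \frac{\probof{\tau>\runalt}}{\runalt^{2\pexp}}
    \geq C\delta \sum_{\runalt=\run_{0}}^{\run-1} \frac{1}{\runalt^{2\pexp}},
\end{equation}
which tends to $\infty$ as $\run\to\infty$. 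On the other hand, $\abs{\state_{\run\wedge\tau}[\lyap]} \leq \eps$ on $\{\tau>\run\}$, and $\state_{\run\wedge\tau}[\lyap] = \state_{\tau}[\lyap]$ is controlled by $\eps + \abs{Y_{\tau}}$ on $\{\tau\leq\run\}$. Under the $\qexp=\infty$ regime of \cref{asm:noise} (the only case in which this lemma is invoked for \cref{thm:avoidance}), the increments $Y_{\run}$ are almost surely bounded, so $\state_{\run\wedge\tau}[\lyap]^{2}$ is uniformly bounded in $L^{1}$, contradicting the divergence above.

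The delicate point, and where I expect the main technical work to sit, is precisely the jump of $\curr[\lyap]$ at the stopping instant $\tau$: without a uniform bound on the $\abs{Y_{\run}}$, the stopped second moment $\state_{\tau}[\lyap]^{2}$ need not be uniformly integrable and the contradiction breaks. In the present context the $L^{\infty}$ oracle assumption dispenses with this issue for free; more generally, one would truncate $Y_{\run}$ at a level growing like $\sqrt{A_{\run}}$ and combine a Kronecker-type lemma for the martingale part with the divergence of the compensator to recover the same conclusion under merely an $L^{2}$ hypothesis on the conditional second moments.
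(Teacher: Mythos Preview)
The paper does not prove this lemma: it is stated explicitly ``without proof'' and attributed to \citet[Lemma~4.2]{BH96}. So there is no in-paper argument to compare against; what you have written is your own proof of a cited result.

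Your approach is the standard one and is correctly executed: the Doob decomposition of $\curr[\lyap]^{2}$, the divergence of the compensator when $2\pexp\leq1$, and the localization via the first exit time $\tau$ from $\{\abs{\curr[\lyap]}\leq\eps\}$ all go through as you write them. The one genuine obstruction, which you identify yourself, is the overshoot $Y_{\tau}$ at the stopping instant: without any control on the increments, $\exof{\lyap_{\run\wedge\tau}^{2}}$ can legitimately diverge from the contribution of $\{\tau<\infty\}$ alone, and no contradiction results. (Indeed, martingales can drift to $-\infty$ almost surely if their positive jumps are rare but large, so the naked hypothesis \eqref{eq:energy-unst} is not quite enough on its own.)

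Your resolution via the $\qexp=\infty$ bound is exactly right for the paper's purposes: in the proof of \cref{thm:avoidance} (the only place this lemma is invoked), the paper first establishes $\abs{\curr[Y]} = \bigoh(1/\run^{\pexp})$ almost surely, so the overshoot is controlled and your stopping argument closes cleanly. In the original source the lemma is also stated under ambient stochastic-approximation assumptions that guarantee bounded increments, so the paper's bare restatement is slightly elliptical; your reading of the situation is accurate.
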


\begin{lemma}[$1/2 < \pexp \leq 1$: \citealp{Pem92}, Lemma 5.5]
\label{lem:Pem-small}
Suppose that \eqref{eq:energy-unst} holds for some $\pexp\in(1/2,1]$.
Assume further that there exist constants $a,b>0$ such that, for all $\run=\running$, we have:
\begin{enumerate}
\item
$\abs{\curr[Y]} \leq a/\run^{\pexp}$ with probability $1$.
\item
$\oneof{\curr[\lyap] > b/\run^{\pexp}} \exof{\next[Y] \given \curr[\filter]} \geq 0$ with probability $1$.
\end{enumerate}
Then, $\probof{\lim_{\run\to\infty} \curr[\lyap] = 0} = 0$.
\end{lemma}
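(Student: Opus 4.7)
I would prove this by contradiction: suppose the event $\samples_{0} = \{\lim_{\run \to \infty} \curr[\lyap] = 0\}$ has positive probability. On $\samples_{0}$ we have $\curr[\lyap]^{2} \to 0$, so the telescoping series $\sum_{\run}(\lyap_{\run+1}^{2} - \lyap_{\run}^{2})$ converges to $-\lyap_{1}^{2}$. Applying Doob's decomposition, I would write $\lyap_{\run}^{2} = M_{\run} + A_{\run}$, where $A_{\run} = \sum_{\runalt<\run}\exof{\lyap_{\runalt+1}^{2} - \lyap_{\runalt}^{2}\given \iter[\filter]}$ is the predictable compensator and $M_{\run}$ is a martingale. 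By hypothesis~\eqref{eq:energy-unst}, $A_{\run} \geq C \sum_{\runalt=1}^{\run-1} 1/\runalt^{2\pexp}$. On $\samples_{0}$, the convergence $\lyap_{\run}^{2} \to 0$ forces $M_{\run} \to -A_{\infty}$, hence $A_{\infty} < \infty$ almost surely on $\samples_{0}$. For $\pexp \leq 1/2$ this immediately contradicts $\sum 1/\runalt^{2\pexp} = \infty$ (as in \cref{lem:Pem-large}); for $\pexp \in (1/2, 1]$ the series converges, so the variance lower bound alone is insufficient and condition~(2) must be used crucially.

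The drift condition would enter as follows. I would show that once $\curr[\lyap]$ exceeds the threshold $b/\run^{\pexp}$, the non-negative drift combined with the jump bound $\abs{\curr[Y]} \leq a/\run^{\pexp}$ makes it improbable for $\curr[\lyap]$ to drop back to zero. Starting from a hitting time $\tau$ with $\lyap_{\tau} \geq b/\tau^{\pexp}$, the restricted process $(\lyap_{\run})_{\run \geq \tau}$ behaves like a submartingale as long as it remains above threshold, and Doob's maximal inequality bounds the probability of a large downward excursion by the accumulated conditional variance from $\tau$ onward, which is finite for $\pexp > 1/2$. Complementarily, a Paley\textendash Zygmund-type argument uses the second-moment lower bound extracted from~\eqref{eq:energy-unst} (absorbing the cross term $2 \curr[\lyap] \exof{\next[Y] \given \curr[\filter]}$, which is negligible when $\curr[\lyap]$ is close to zero) to show that $\curr[\lyap]$ exceeds the threshold $b/\run^{\pexp}$ with positive probability at infinitely many times. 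Iterating, each such escape attempt has a uniformly positive chance of becoming permanent, yielding a contradiction to $\curr[\lyap] \to 0$.

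The main obstacle lies in the quantitative balance between these two mechanisms in the regime $\pexp > 1/2$. Since $\sum_{\run} 1/\run^{2\pexp} < \infty$, one cannot iterate a naive Borel\textendash Cantelli-type argument: each escape attempt has a positive probability of being reversed, and the total variance budget across all attempts is finite. A multi-scale stopping-time construction \textendash\ scaling both the target threshold and the observation window with $\run$ in a coordinated way \textendash\ is needed to guarantee that some escape attempt eventually sticks, and establishing the uniformity of the escape probability as $\run \to \infty$ (independent of the starting configuration) is the technical heart of the proof.
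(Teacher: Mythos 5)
The paper does not actually prove this lemma: it is imported from \citet{Pem92} (see also \citealp[Lemma~9.6]{Ben99}), and the text explicitly states it ``without proof,'' so there is no in-paper argument to match yours against. Your outline does reproduce the skeleton of Pemantle's argument \textendash\ a second-moment/Paley\textendash Zygmund step showing repeated upward escapes, a submartingale/maximal-inequality step showing that an escape sticks, and a final iteration (which in Pemantle is packaged more cleanly as a uniform bound $\probof{\lim_{\runalt}\iter[\lyap]=0\given\curr[\filter]}\leq 1-\delta$ followed by L\'evy's $0$\textendash$1$ law, rather than an iteration over stopping times).

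There is, however, a concrete quantitative gap in your second step, and it is not merely the ``technical heart'' you defer \textendash\ it changes the level the process must reach before the sticking argument can even begin. If $\run$ is a time at which $\curr[\lyap]$ first exceeds the drift threshold $b/\run^{\pexp}$, the residual conditional variance of the martingale part from $\run$ onward is of order $\sum_{\runalt\geq\run}a^{2}/\runalt^{2\pexp}\asymp\run^{1-2\pexp}$, while the squared threshold is $b^{2}\run^{-2\pexp}$; their ratio is $\asymp\run$, so Doob's $L^{2}$ maximal inequality applied at the level $b/\run^{\pexp}$ gives a bound larger than $1$ and cannot rule out a return to zero. The downward-excursion argument only closes once the process has climbed to the diffusive scale $K\run^{1/2-\pexp}\gg b/\run^{\pexp}$, so the first (variance) step must be run over a window such as $[\run,2\run]$ \textendash\ where \eqref{eq:energy-unst} accumulates to $\gtrsim\run^{1-2\pexp}$ and condition (2) prevents the drift from cancelling this growth \textendash\ to show that $\sup_{\run\leq\runalt\leq2\run}\iter[\lyap]\geq K\run^{1/2-\pexp}$ with conditional probability bounded below uniformly in $\run$ and in the history; only then does your maximal-inequality step apply. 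As written, the claim that exceeding $b/\run^{\pexp}$ already makes a return to zero improbable is a step that would fail.
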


A first version of \cref{lem:Pem-small} was originally proven by \citet{Pem90} for the special case $\pexp=1$ but the proof techniques are similar for all $1/2 < \pexp\leq1$;
for a more general estimate (which we will not need here), see \citet[Lemma~9.6]{Ben99}.

\subsection{Center manifold theory and geometric constructions}
\label{sec:energy}

We now proceed with the construction of a suitable Lyapunov function that will allow us to apply \cref{lem:Pem-large,lem:Pem-small}.
This construction follows \citet{BH95} and \citet{Ben99} and relies crucially on center manifold theory;
for a general introduction to the topic, we refer the reader to \citet{Lee03,Shu87}, and \citet{Rob12}.

To begin, let $\saddles$ be a strict saddle manifold as defined in \cref{sec:convergence}.
Then, for all $\saddle\in\saddles$, we define the \emph{center}, \emph{stable} and \emph{unstable} directions of $\sol$ to be respectively the 
eigenspaces of $\hmat(\saddle) = \nabla^{2}\obj(\saddle)$ corresponding to zero, positive and negative eigenvalues thereof, \ie
\begin{subequations}
\begin{alignat}{2}
\zerdirs{\saddle}
	&= \setdef{\vvec\in\R^{\vdim}}{\hmat(\saddle)\vvec = 0}
		= \ker\hmat(\saddle),
	&\hspace{4em}
	&[\text{central directions}]
	\\[\smallskipamount]
\posdirs{\saddle}
	&= \setdef{\vvec\in\R^{\vdim}}{\hmat(\saddle)\vvec = \lambda \vvec \; \text{for some $\lambda>0$}}
	&\hspace{4em}
	&[\text{stable directions}]
	\\[\smallskipamount]
\negdirs{\saddle}
	&= \setdef{\vvec\in\R^{\vdim}}{\hmat(\saddle)\vvec = \lambda \vvec\; \text{for some $\lambda<0$}}
	&\hspace{4em}
	&[\text{unstable directions}]
\end{alignat}
\end{subequations}

The reason for this terminology is that $\hmat(\point) = \Jac(\nabla\obj(\point))$, so these subspaces correspond to directions that are respectively neutral (or \emph{slow}), attracting, and repelling under \eqref{eq:GD}.
More preciselly, by the center manifold theorem \cite{Shu87,Rob12}, there exists a neighborhood $\nhd$ of $\saddles$ and a submanifold $\mfld$ of $\vecspace$, called the \emph{center stable manifold} of $\saddles$, and satisfying the following:
\begin{enumerate*}
[\itshape a\upshape)]
\item
$\mfld$ is \emph{locally invariant} under $\flowmap$, \ie there exists some positive $\ctime_{0} > 0$ such that $\flow{\ctime}{\nhd\cup\mfld} \subseteq \mfld$ for all $\ctime\geq\ctime_{0}$;
and
\item
$\vecspace = \tangent{\saddle}{\mfld} \oplus \negdirs{\saddle}$ for all $\saddle\in\saddles$, where $\tangent{\saddle}{\mfld}$ denotes the tangent space to $\mfld$ at $\saddle$.
\end{enumerate*}
In view of this:
\begin{enumerate*}
[\itshape a\upshape)]
\item
perturbations along central directions are tangent to $\mfld$ and are thus expected to evolve ``along'' $\mfld$ under \eqref{eq:GD};
\item
stable perturbations along $\posdirs{\saddle}$ will converge along $\mfld$ to $\saddles$ under \eqref{eq:GD};
and
\item
unstable perturbations along $\negdirs{\saddle}$ are transverse to $\mfld$ and may escape.
\end{enumerate*}

A key property of $\mfld$ is that any globally bounded orbit of \eqref{eq:GD} which is contained in a sufficiently small neighborhood of $\saddle\in\saddles$ must be entirely contained in $\mfld$ \citep{Shu87}.
Moreover, by the non-minimality assumption for $\saddles$, it follows that $\vdim_{u} \equiv \dim\negdirs{\saddle} \geq 1$, so the dimension of $\mfld$ is at most $\vdim-1$.
This suggests that perturbations along any direction that is transverse to $\mfld$ will be repelled under \eqref{eq:GD};
we make this statement precise in the lemma below.

\begin{lemma}
\label{lem:unstable}
Let $\gen{\ctime}{\point} = \nabla_{\point} \flow{\ctime}{\saddle}$ denote the infinitesimal generator of the flow of \eqref{eq:GD}.
Then:
\begin{enumerate}
\item
The unstable subspaces $\negdirs{\saddle}$ are invariant under \eqref{eq:GD};
specifically, $\gen{\ctime}{\saddle} \negdirs{\saddle} = \negdirs{\saddle}$ for all $\ctime\geq0$ and all $\saddle\in\saddles$.
\item
There exists a positive constant $\const>0$ such that, for all $\saddle\in\saddles$, $\wvec\in\negdirs{\saddle}$ and $\ctime\geq0$, we have
\begin{equation}
\label{eq:unstable-exp}
\norm{\gen{\ctime}{\saddle} \wvec}
	\geq e^{\const\ctime} \norm{\wvec}. 
\end{equation}
\end{enumerate}
\end{lemma}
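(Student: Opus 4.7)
The plan is to reduce both assertions to elementary spectral analysis of the linearization of \eqref{eq:GD} at the fixed point $\saddle$. Since $\saddle\in\sols$ satisfies $\nabla\obj(\saddle)=0$, the trajectory $\flow{\ctime}{\saddle}=\saddle$ is constant, and differentiating the flow in its spatial argument along this constant trajectory yields the first variational equation
\begin{equation*}
\frac{d}{d\ctime}\gen{\ctime}{\saddle} = -\hmat(\saddle)\,\gen{\ctime}{\saddle}, \qquad \gen{\cstart}{\saddle}=\eye.
\end{equation*}
The key identification is therefore $\gen{\ctime}{\saddle}=\exp(-\ctime\,\hmat(\saddle))$, after which the lemma becomes a statement about this matrix exponential.

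For (1), the crucial observation is that $\hmat(\saddle)$ is symmetric, so $\negdirs{\saddle}$ decomposes as an orthogonal direct sum of the eigenspaces of $\hmat(\saddle)$ associated with strictly negative eigenvalues. Each such eigenspace is invariant under $\hmat(\saddle)$, hence under $\exp(-\ctime\,\hmat(\saddle))$, on which the action is just multiplication by the nonzero scalar $e^{-\ctime\lambda}$ with $\lambda<0$. Invariance and invertibility of $\gen{\ctime}{\saddle}$ on $\negdirs{\saddle}$ then yield the claimed equality $\gen{\ctime}{\saddle}\,\negdirs{\saddle}=\negdirs{\saddle}$.

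For (2), I would write $\wvec=\sum_{i}\wvec_{i}$ in the orthogonal eigendecomposition of $\negdirs{\saddle}$ and estimate $\norm{\gen{\ctime}{\saddle}\wvec}^{2}$ componentwise. Using the uniform strict-saddle spectral gap $\lambda_{i}\leq-\const_{-}$, each factor $e^{-2\ctime\lambda_{i}}$ is lower bounded by $e^{2\ctime\const_{-}}$, and summing yields
\begin{equation*}
\norm{\gen{\ctime}{\saddle}\wvec}^{2} \geq e^{2\ctime\const_{-}}\,\norm{\wvec}^{2},
\end{equation*}
after which taking square roots establishes the bound with $\const=\const_{-}$.

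There is no serious obstacle: the entire argument is one-shot linear algebra once the identification with the matrix exponential is in hand. The only subtlety worth flagging is that the rate $\const$ must be uniform in $\saddle\in\saddles$, and this uniformity is not a by-product of continuity but is precisely what the strict saddle manifold definition postulates through the constant $\const_{-}$. It is this feature that will later allow the pointwise infinitesimal repulsion to be lifted into a global exponential growth estimate feeding into the probabilistic avoidance argument via \cref{lem:Pem-large,lem:Pem-small}.
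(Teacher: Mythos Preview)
Your proposal is correct and follows essentially the same approach as the paper: identify $\gen{\ctime}{\saddle}=\exp(-\ctime\hmat(\saddle))$ via the variational equation at the fixed point, then use the orthogonal eigendecomposition of the symmetric matrix $\hmat(\saddle)$ to obtain invariance of $\negdirs{\saddle}$ and the exponential growth bound with rate $\const=\const_{-}$. Your remark that the uniformity in $\saddle$ is guaranteed directly by the strict saddle manifold definition (rather than inferred from continuity) is exactly the point.
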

\smallskip

\begin{remark}
In the above (and what follows), we write $AW$ for the image of a vector space $W$ under a linear operator $A$.
Specifically, if $A\from V \to V'$ is a linear operator between two vector spaces $V$ and $V'$, and if $W\leq V$ is a subpace of $V$, we let $AW \equiv \im_{A}(W) = \setdef{Aw}{w\in W}$.
We also treat linear operators and matrices interchangeably.
\end{remark}

\begin{remark}
The proof of \cref{lem:unstable} (and, in fact, all of our analysis in this section) does not require the uniformity condition $\min \lambda_{+}(\hmat(\saddle)) \geq c_{+}$ for the Hessian's positive eigenvalues (if such eigenvalues exist).
We only make it to simplify the presentation and avoid cases where the dimension of $\posdirs{\saddle}$ may change;
in that case, it would be sufficient to work with a subset of $\saddles$ over which this does not occur.
\end{remark}

In words, \cref{lem:unstable} states that
\begin{enumerate*}
[\itshape a\upshape)]
\item
the unstable directions along $\saddles$ are consistent with the flow of \eqref{eq:GD};
and
\item
perturbations along unstable directions are repelled from $\saddles$ at a geometric rate.
\end{enumerate*}
The proof is as follows:

\begin{proof}[Proof of \cref{lem:unstable}]
Recall first that, for all $\ctime\geq0$ and all $\point\in\points$, we have $\gen{\ctime}{\point} = \nabla_{\point} \flow{\ctime}{\point} = \exp(\ctime\Jac(-\nabla\obj(\point)))= \exp(-\ctime\hmat(\point))$.
Therefore, since $\saddles$ consists entirely of stationary points of \eqref{eq:GD}, we readily get
\usetagform{comment}
\begin{align}
\gen{\ctime}{\saddle} \negdirs{\saddle}
	&= e^{-\ctime\hmat(\saddle)} \negdirs{\saddle}
	\notag\\
	&= \sum_{k=0}^{\infty} \frac{(-\ctime)^{k}}{k!} \hmat(\saddle)^{k} \negdirs{\saddle}
	= \sum_{k=0}^{\infty} \frac{(-\ctime)^{k}}{k!} \negdirs{\saddle}
	\tag{because $\hmat(\saddle)\negdirs{\saddle} = \negdirs{\saddle}$}
	\\
	&= e^{-\ctime} \negdirs{\saddle}
	= \negdirs{\saddle},
\end{align}
\usetagform{default}
so our first claim follows.

For our second claim,
let $\setdef{\unitvec_{i}}{i=1,\dotsc,\vdim}$ be an orthnormal set of eigenvectors of $\hmat(\saddle)$.%
\footnote{That such a set exists follows from the fact that $\hmat(\saddle)$ is symmetric.}
Moreover, let $\lambda_{i} \equiv \lambda_{i}(\saddle) < 0$ be the eigenvalue of $\hmat(\saddle)$ corresponding to $\unitvec_{i}$, and assume without loss of generality that the indexing labels $i=1,\dotsc,\vdim$ have been chosen in ascending eigenvalue order, \ie $\lambda_{1} \leq \dotsi \leq \lambda_{\vdim}$.
It then follows that $\setdef{\unitvec_{i}}{i=1,\dotsc,\vdim_{u} \equiv \dim\negdirs{\saddle}}$ is an orthonormal basis of $\negdirs{\saddle}$ consisting entirely of eigenvectors of $\hmat(\saddle)$.
Thus, writing $\wvec = \sum_{i} \wvec_{i} \unitvec_{i}$ for a given vector $\wvec\in\negdirs{\saddle}$, we have:
\begin{equation}
\gen{\ctime}{\saddle} \wvec
	= e^{-\ctime\hmat(\saddle)} \wvec
	= \sum_{i=1}^{\vdim_{u}} \wvec_{i} e^{-\ctime\hmat(\saddle)} \unitvec_{i}
	= \sum_{i=1}^{\vdim_{u}} \wvec_{i} e^{-\ctime\lambda_{i}} \unitvec_{i},
\end{equation}
where, in the last step, we used the fact that $\unitvec_{i}$ is an eigenvector of $\hmat(\saddle)$ with eigenvalue $\lambda_{i}$ (and hence, also of $e^{-\ctime\hmat(\saddle)}$ with eigenvalue $e^{-t\lambda_{i}}$).
Therefore, by orthonormality, we obtain: 
\begin{equation}
\norm{\gen{\ctime}{\saddle}\wvec}^{2}
	= \sum_{i=1}^{\vdim_{u}} e^{-2\ctime\lambda_{i}} \wvec_{i}^{2}
	\geq e^{2\const_{-}\ctime} \norm{\wvec}^{2},
\end{equation}
where $\const_{-}>0$ is defined in \cref{sec:avoidance}.
\end{proof}

To proceed, we will need to define a suitable ``projector'' from neighborhoods of $\saddles$ to $\mfld$.
To carry out this construction, consider the vector bundle
\begin{equation}
\negdirs{\saddles}
	\equiv \setdef{(\saddle,\wvec)}{\saddle\in\saddles,\wvec\in\negdirs{\saddle}}
\end{equation}
of the unstable directions of \eqref{eq:GD} over $\saddles$.
Since each $\negdirs{\saddle}$ is a subspace of $\points$, we can view $\negdirs{\saddles}$ as a map from $\saddles$ to the Grassmannian $\Grass(\vdim_{u},\vdim)$ of $\vdim_{u}$-dimensional spaces of $\R^{\vdim}$.
By the Whitney embedding theorem \citep{Lee03}, $\Grass(\vdim_{u},\vdim)$ can be embedded as a $\vdim_{u} \times (\vdim-\vdim_{u})$-dimensional submanifold of $\R^{2\vdim_{u}(\vdim-\vdim_{u})}$;
as such, $\negdirs{\saddles}$ may be seen as a map $\saddles \to \R^{2\vdim_{u}(\vdim-\vdim_{u})}$ with values in $\Grass(\vdim_{u},\vdim) \injects \R^{2\vdim_{u}(\vdim-\vdim_{u})}$.
Since $\saddles$ is closed (as a connected component of $\sols$), the Tietze extension theorem \citep{Arm83} further implies that this map admits a continuous extension $\pi\from\R^{\vdim}\to\R^{2\vdim_{u}(\vdim-\vdim_{u})}$ to all of $\R^{\vdim}$.
By mollifying this map with an approximate identity supported on $\saddles$, we can further assume that this extension is smooth in a neighborhood of $\saddles$.
Moreover, by standard results in differential topology \citep[Chap.~4]{Hir76}, there exists a smooth retraction of a neighborhood of $\Grass(\vdim_{u},\vdim)$ onto $\Grass(\vdim_{u},\vdim)$ in $\R^{2\vdim_{u}(\vdim-\vdim_{u})}$.
Hence, by composing $\pi$ with this retraction, we finally obtain a smooth vector bundle
\begin{equation}
\negdirs{\nhd}
	\equiv \setdef{(\point,\wvec)}{\point\in\nhd,\wvec\in\negdirs{\point}}
\end{equation}
which, by construction, coincides with $\negdirs{\saddles}$ over $\saddles$ (explaining the slight abuse of notation).

By taking a smaller neighborhood if necessary, we may assume that $\nhd$ is compact and coincides with the one in the definition of $\mfld$, \ie $\flow{\ctime}{\nhd\cap\mfld} \subseteq \mfld$ for small enough $\ctime$.
We may now construct a ``projector'' from a (potentially smaller) neighborhood of $\mfld$ to $\mfld$ as follows:
First, consider the simple vector addition mapping $Q\from\negdirs{\nhd} \to \points \equiv \R^{\vdim}$ sending $(\point,\wvec) \in \negdirs{\nhd} \mapsto \point+\wvec \in \R^{\vdim}$.
Clearly, the zero section $(\point,0)$ of $\negdirs{\nhd}$ is mapped diffeomorphically to $\nhd$ so, by the inverse function theorem \citep{Lee03}, it follows that $Q$ is a local diffeomorphism.
Thus, letting $\alt\nhd$ be a neighborhood of $\mfld$ over which $Q$ is a diffeomorphism, and letting $\nhd_{0} = Q(\alt\nhd)$, we get a map $\Pi\from\nhd_{0} \to \mfld$ such that
\begin{equation}
\label{eq:projector}
\Pi(\wpoint)
	= \point
	\iff
Q(\point,\wvec)
	= \point + \wvec
	= \wpoint
\end{equation}

The reason for this sophisticated construction (as opposed to \eg taking a Euclidean projection from $\nhd_{0}$ to $\mfld$) is that $\Pi$ respects the unstable directions of $\saddles$ under \eqref{eq:GD}.
More precisely, we have:

\begin{lemma}
\label{lem:projector}
For $\point\in\nhd$, let $\proxmap_{\point} \from \tangent{\point}{\mfld} \oplus \negdirs{\point} \to \tangent{\point}{\mfld}$ denote the projection
\begin{equation}
\label{eq:proj-fiber}
\underset{\underset{\scriptstyle \tangent{\point}{\mfld}\oplus \negdirs{\point}\quad}{\mathbin{\rotatebox[origin=c]{90}{$\scriptstyle\ni$}}}}{\tvec+\wvec}
	\mapsto \prox{\point}{\tvec + \wvec}
	=
	\underset{\underset{\scriptstyle \tangent{\point}{\mfld}}{\mathbin{\rotatebox[origin=c]{90}{$\scriptstyle\ni$}}}}{\tvec}
\end{equation}
Then, for all $\point \in \nhd_{0} \cap \mfld$, we have $\Jac(\Pi(\point)) = \proxmap_{\point}$.
\end{lemma}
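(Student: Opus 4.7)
The plan is to differentiate the implicit relation $Q(\Pi(\wpoint),\wpoint-\Pi(\wpoint)) = \wpoint$ from \eqref{eq:projector}, which can be rewritten as $\Pi = \pi_{1} \circ Q^{-1}$, where $\pi_{1}\from\negdirs{\mfld\cap\nhd}\to\mfld\cap\nhd$ denotes the bundle projection onto the base. Since $Q$ is a local diffeomorphism on $\alt\nhd$ by construction, the chain rule gives $\Jac(\Pi)(\point) = d\pi_{1}|_{(\point,0)} \circ dQ^{-1}|_{\point}$ for every $\point\in\nhd_{0}\cap\mfld$, so the task reduces to computing $dQ$ at points of the form $(\point,0)$ with $\point\in\mfld$.

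For this computation, I invoke the canonical splitting of the tangent space to a vector bundle at its zero section, namely $\tangent{(\point,0)}{\negdirs{\mfld}} \cong \tangent{\point}{\mfld} \oplus \negdirs{\point}$, where the first summand encodes horizontal displacements along the base $\mfld$ and the second vertical displacements along the fiber $\negdirs{\point}$. Since $Q$ is the restriction of vector addition, its differential acts as $dQ|_{(\point,0)}(\tvec,\wvec) = \tvec+\wvec$, viewed as a linear map $\tangent{\point}{\mfld}\oplus\negdirs{\point}\to\points$. The center manifold theorem supplies the direct sum decomposition $\points = \tangent{\saddle}{\mfld}\oplus\negdirs{\saddle}$ on $\saddles$, and by continuity this extends to a neighborhood of $\saddles$ in $\mfld$. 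Consequently $dQ|_{(\point,0)}$ is a linear isomorphism, whose inverse is exactly the decomposition map $\vvec = \tvec + \wvec \mapsto (\tvec,\wvec)$.

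Composing with $d\pi_{1}|_{(\point,0)}(\tvec,\wvec) = \tvec$ then yields $\Jac(\Pi)(\point)(\vvec) = \tvec$ for every decomposition $\vvec = \tvec + \wvec$ with $\tvec\in\tangent{\point}{\mfld}$ and $\wvec\in\negdirs{\point}$, which is precisely $\proxmap_{\point}(\vvec)$ by \eqref{eq:proj-fiber}. The main technical subtlety is the use of the canonical tangent space splitting at the zero section, but happily this decomposition is intrinsic (no choice of connection is needed) precisely because we evaluate only at the zero section, so the formula for $dQ$ is independent of any local trivialization chosen for $\negdirs{\mfld}$. A short dimension check confirms consistency: $\dim\mfld = \vdim - \vdim_{u}$ and $\dim\negdirs{\point} = \vdim_{u}$ sum to $\vdim = \dim\points$, as required for $Q$ to be a local diffeomorphism.
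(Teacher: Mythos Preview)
Your proof is correct and follows essentially the same idea as the paper's. The paper argues via curves: it takes a smooth curve $\wpoint(\ctime)$ through $\point\in\mfld$, writes $\wpoint(\ctime)=\point(\ctime)+\psi(\ctime)$ with $\point(\ctime)=\Pi(\wpoint(\ctime))\in\mfld$ and $\psi(\ctime)\in\negdirs{\point(\ctime)}$, and differentiates at $\ctime=0$ (using $\psi(0)=0$ to get $\dot\psi(0)\in\negdirs{\point}$). Your chain-rule computation $\Jac(\Pi)=d\pi_{1}\circ dQ^{-1}$, together with the canonical splitting $\tangent{(\point,0)}{\negdirs{\mfld}}\cong\tangent{\point}{\mfld}\oplus\negdirs{\point}$ at the zero section, is exactly the bundle-theoretic packaging of that same curve argument; the paper's step ``$\dot\psi(0)\in\negdirs{\point(0)}$'' is precisely where the zero-section splitting you invoke enters implicitly.
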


\begin{proof}
Let $\wpoint(\ctime)$, $\ctime\in(-1,1)$ be a smooth curve on $\nhd_{0}$ going through $\point = \wpoint(0)\in\mfld$ at time $\ctime=0$, and let $\point(\ctime) = \Pi(\wpoint(\ctime))$ so $\wpoint(\ctime) = \point(\ctime) + \psi(\ctime)$ for some smooth $\psi(\ctime) \in \negdirs{\point(\ctime)}$.
By differentiating, we get $\dot\wpoint(0) = \dot\point(0) + \dot\psi(0)$;
since $\point(\ctime) \in \mfld$ and $\psi(\ctime) \in \negdirs{\point(\ctime)}$ for all $\ctime$, we readily get $\dot\point(0) \in \tangent{\point(0)}{\mfld}$ and $\dot\psi(0) \in \negdirs{\point(0)}$.
Letting $\tvec = \dot\point(0)$ and $\wvec = \dot\psi(0)$, this shows that the pushforward of $\dot\wpoint(0) = \tvec + \wvec$ under $\Pi$ at $\point$ is $\mathrm{D}\Pi_{\point}(\tvec + \wvec) \equiv \Jac(\Pi(\point))(\tvec + \wvec) = \tvec = \prox{\point}{\tvec + \wvec}$.
With $\wpoint(\ctime)$ arbitrary, our claim follows.
\end{proof}

We are finally in a position to define a ``potential function'' on $\nhd_{0}$ as
\begin{equation}
\label{eq:pot}
\prelyap(\wpoint)
	= \norm{\Pi(\wpoint) - \wpoint}
\end{equation}
\ie as the (normed) distance of $\wpoint\in\nhd_{0}$ from its vector projection $\Pi(\wpoint)$ on $\mfld$ along the unstable directions of \eqref{eq:GD}.
By construction, we have
\begin{equation}
\label{eq:pot-posdef}
\prelyap(\wpoint)
	\geq0
	\quad
	\text{with equality if and only if $\wpoint \in \mfld \cap \nhd_{0}$}.
\end{equation}
Coupling \eqref{eq:pot-posdef} with \cref{lem:unstable,lem:projector}, we see that $\pot$ satisfies the requirements of \citet[Proposition 9.5]{Ben99}, which, when adapted to our setting, provides the following:

\begin{proposition}[\citealp{Ben99}]
\label{prop:energy}
There exists a compact neighborhood $\nhd_{\saddles}$ of $\saddles$,
a positive constant $\beta>0$,
and
a time horizon $\tau>0$ such that the energy function
\begin{equation}
\label{eq:energy-saddle}
\lyap(\point)
	= \int_{0}^{\tau} \prelyap(\flow{-\ctime}{\point}) \dd\ctime
	\qquad
	\point\in\nhd_{\saddles},
\end{equation}
enjoys the following properties:
\begin{enumerate}

\item
For all $\point\in\nhd_{\saddles}$, $\lyap$ has a Lipschitz continuous and positively homogeneous right derivative $\nabla^{+} \lyap(\point)$;%
\footnote{Recall here that a function $\phi$ has a right derivative when the limit $\nabla^{+}\phi(\point)[\vvec] \equiv \lim_{\ctime\to0^{+}} \bracks{\phi(\point+\ctime\vvec) - \phi(\point)} / \ctime$ exists for all $\vvec\in\R^{\vdim}$.}
in addition, $\lyap$ is continuously differentiable on $\nhd_{\saddles} \setminus \mfld$.

\item
For all $\point\in\nhd_{\saddles}$, we have
\begin{equation}
\label{eq:energy-rder}
\nabla^{+}\lyap(\point) [\nabla\obj(\point)]
	\leq - \beta \lyap(\point).
\end{equation}
In particular, for all $\point\in\nhd_{\saddles}\setminus\mfld$, we have:
\begin{equation}
\label{eq:energy-grad}
\braket{\nabla\lyap(\point)}{\nabla\obj(\point)}
	\leq -\beta \lyap(\point)
\end{equation}

\item
There exists a constant $\alpha>0$ such that, for all $\point\in\nhd_{\saddles}$ and all sufficiently small $\vvec\in\R^{\vdim}$, we have
\begin{equation}
\label{eq:energy-discrete}
\lyap(\point + \vvec)
	\geq \lyap(\point)
		+ \nabla^{+}\lyap(\point)[\vvec]
		- \frac{\alpha}{2} \norm{\vvec}^{2}.
\end{equation}

\item
There exists a constant $\beta>0$ such that, for all $\vvec\in\R^{\vdim}$, we have:
\begin{subequations}
\begin{alignat}{2}
\norm{\nabla\lyap(\point)}
	&\geq \beta
		&\qquad
		&\text{for all $\point\in\nhd_{\saddles}\setminus\mfld$},
\shortintertext{and}
\nabla^{+}\lyap(\point)[\vvec]
	&\geq \beta \norm{\prox{\point}{\vvec} - \vvec}
		&\qquad
		&\text{for all $\point\in\nhd_{\saddles}\cap\mfld$}.
\end{alignat}
\end{subequations}
\end{enumerate}
\end{proposition}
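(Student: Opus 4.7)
The plan is to exploit the explicit construction of the projector $\Pi$ together with the exponential expansion of the unstable bundle $\negdirs{\saddles}$ under \eqref{eq:GD} (Lemma \ref{lem:unstable}) to turn the merely continuous map $\prelyap$ into a smoother energy whose Lie derivative along $\nabla\obj$ decays exponentially. The workhorse identity is the change-of-variables $\flow{-\ctime}{\flow{-s}{\point}} = \flow{-s-\ctime}{\point}$, which, after a reparametrization, gives
\begin{equation}
\lyap(\flow{-s}{\point}) = \int_{-s-\tau}^{-s} \prelyap(\flow{u}{\point}) \dd u.
\end{equation}
By the fundamental theorem of calculus this is differentiable in $s$, and since the backward flow $s\mapsto\flow{-s}{\point}$ has initial velocity $+\nabla\obj(\point)$, the right derivative at $s=0^{+}$ telescopes into
\begin{equation}
\nabla^{+}\lyap(\point)[\nabla\obj(\point)] = \prelyap(\flow{-\tau}{\point}) - \prelyap(\point).
\end{equation}

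For property (ii), I would invoke Lemma \ref{lem:unstable}(2) for the backward generator: on $\negdirs{\saddle}$ the map $\gen{-\ctime}{\saddle}$ contracts by at least $e^{-\const_{-}\ctime}$. By $C^{2}$-continuity of $\obj$, together with the construction that forces $\wpoint - \Pi(\wpoint) \in \negdirs{\Pi(\wpoint)}$, this estimate persists with arbitrarily small multiplicative error in a sufficiently small compact neighborhood $\nhd_{\saddles}$ of $\saddles$. Combining $\prelyap(\flow{-\tau}{\point}) \leq e^{-\const_{-}\tau/2}\prelyap(\point)$ with the trivial upper bound $\lyap(\point) \leq \tau\prelyap(\point)$ then yields \eqref{eq:energy-rder} with an explicit constant $\beta \propto (1 - e^{-\const_{-}\tau/2})/\tau$. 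The refinement \eqref{eq:energy-grad} on $\nhd_{\saddles}\setminus\mfld$ follows from the genuine differentiability of $\prelyap=\norm{\Pi(\cdot)-\cdot}$ off $\mfld$, where the vector $\Pi(\wpoint)-\wpoint$ is non-zero and the norm is smooth.

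For the structural properties (i), (iii), and (iv), I would examine the full directional derivative by inserting spatial perturbations $s\mapsto \point + s\vvec$ under the flow. Lemma \ref{lem:projector} is central here: at $\point\in\mfld$, $\Jac(\Pi(\point)) = \proxmap_{\point}$ is the projection along $\negdirs{\point}$, so the one-sided derivative of $\prelyap$ in a direction $\vvec = \tvec+\wvec$ with $\tvec\in\tangent{\point}{\mfld}$ and $\wvec\in\negdirs{\point}$ equals $\norm{\wvec} = \norm{\prox{\point}{\vvec}-\vvec}$. Averaging this over the window $[-\tau,0]$ inherits positive homogeneity in $\vvec$, Lipschitz continuity from the smoothness of $\Pi$ and the $C^{1}$-dependence of $\flowmap$ on initial data, and yields the lower bound in (iv) by uniformity/compactness on $\nhd_{\saddles}\cap\saddles$. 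The quadratic expansion (iii) then follows by a Taylor argument using the $C^{2}$-regularity of the flow, and continuous differentiability off $\mfld$ in (i) is immediate since $\prelyap$ is $C^{\infty}$ outside $\mfld$.

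The main obstacle I expect is controlling the \emph{approximate}, rather than exact, invariance of the unstable bundle under the flow away from $\saddles$: for $\point$ off $\saddles$ one generally has $\Pi(\flow{-\ctime}{\point}) \neq \flow{-\ctime}{\Pi(\point)}$, and the tangential component of $\flow{-\ctime}{\point}$ along $\mfld$ need not coincide with the tangential component at the original base point, so the linearized contraction estimate of Lemma \ref{lem:unstable} does not apply verbatim. The remedy is to fix $\tau>0$ moderately small so linearization errors on $[-\tau,0]$ remain subdominant to the contraction factor $e^{-\const_{-}\tau}$, and then shrink $\nhd_{\saddles}$ to a tubular neighborhood of radius small compared to $e^{-\smooth\tau}$ so that all relevant backward orbits stay in the region where Lemma \ref{lem:unstable} is valid up to the prescribed tolerance. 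A Grönwall-type bound on the variational equation then absorbs the compounded perturbations and closes the argument.
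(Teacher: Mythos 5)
The paper does not actually prove \cref{prop:energy}: it is imported wholesale from \citet[Proposition~9.5]{Ben99}, with the geometric preliminaries (\cref{lem:unstable,lem:projector}, the construction of $\Pi$ and $\prelyap$) supplied only so that the cited result applies. What you have written is therefore not an alternative to the paper's argument but a reconstruction of the cited one, and it is a faithful reconstruction: the change-of-variables identity, the telescoping formula $\nabla^{+}\lyap(\point)[\nabla\obj(\point)] = \prelyap(\flow{-\tau}{\point}) - \prelyap(\point)$, the backward contraction of the unstable fibre, and the linearization of $\prelyap$ at $\mfld$ via $\Jac(\Pi) = \proxmap_{\point}$ are exactly the ingredients of the Benaïm\textendash Hirsch proof. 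Your diagnosis of the main difficulty (the unstable bundle is only approximately invariant off $\saddles$) and the Grönwall/shrink-the-neighborhood remedy are also the standard resolution.

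Three points in your sketch deserve more care if this were to be written out. First, the telescoping identity gives the derivative of $\lyap$ \emph{along the backward orbit} $s \mapsto \flow{-s}{\point}$; to identify this with the straight-line right derivative $\nabla^{+}\lyap(\point)[\nabla\obj(\point)]$ you need a chain rule for one-sided (Hadamard) directional derivatives, which holds here only because $\lyap$ is locally Lipschitz with positively homogeneous directional derivatives \textendash\ i.e., part (i) must be established \emph{before} part (ii), not after. Second, the bound $\lyap(\point) \leq \tau \prelyap(\point)$ is not trivial: it itself relies on the monotone decay $\prelyap(\flow{-\ctime}{\point}) \leq \prelyap(\point)$, which is the same approximate-contraction estimate you are in the middle of proving, so the logic must be ordered so as not to be circular (it is not, but the sketch reads as if the bound were free). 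Third, you address only the second inequality of part (iv); the gradient lower bound $\norm{\nabla\lyap(\point)} \geq \beta$ on $\nhd_{\saddles}\setminus\mfld$ requires passing from the on-manifold formula $\nabla^{+}\lyap(\point)[\vvec] \geq \beta\norm{\prox{\point}{\vvec}-\vvec}$ to nearby points off $\mfld$ by continuity and compactness, which should be said explicitly since this is the inequality actually used in the avoidance proof when $\curr\notin\mfld$.
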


\Cref{prop:energy} follows from \citet[Proposition 9.5]{Ben99}, so we do not present a proof.
More important for our purposes are the following immediate consequences thereof:
\begin{enumerate}

\item
By \eqref{eq:energy-grad}, the energy $\lyap(\orbit{\ctime})$ of a solution orbit $\orbit{\ctime}$ of \eqref{eq:GD} will grow at a (locally) geometric rate if $\orbit{\ctime}$ doesn't already lie in the center stable manifold $\mfld$ of $\saddles$.
This means that \aclp{APT} of \eqref{eq:GD} that do not lie on $\mfld$ for arbitrarily long windows of time will also escape $\mfld$ (and hence $\saddles$).

\item
The bound \eqref{eq:energy-discrete} provides the basis for a discrete-time version of the above argument:
as long as $\curr$ is sufficiently close to $\saddles$, the energy before and after a stochastic gradient step will be linked as
\begin{equation}
\label{eq:energy-SGD}
\lyap(\next)
	\geq \lyap(\curr)
		+ \beta \curr[\step] \lyap(\curr)
		- \curr[\step] \curr[\sbias]
		- \frac{\alpha\curr[\step]^{2}}{2} \norm{\curr[\signal]}^{2},
\end{equation}
where $\curr[\sbias]$ is an additive noise term which is non-antagonistic in expectation.
This means that, on average, the iterates $\curr[\lyap] \equiv \lyap(\curr)$ will grow at a (locally) geometric rate, so $\curr$ cannot remain in the vicinity of $\saddles$ for very long periods.
\end{enumerate}

To make the above precise, we will need to invoke the probabilistic estimates stated in \cref{sec:Pemantle}.
We do so in the following section.

\subsection{Avoidance of saddle-point manifolds}
\label{sec:Benaim}

For convenience, we begin by restating our main avoidance result below:

\avoidance*

\begin{proof}
Our proof follows the arguments of \citet{BH95}, suitably adapted to our setting.
To begin, let $\nhd_{\saddles}$ be the compact neighborhood of $\saddles$ identified in \cref{prop:energy}
and
assume without loss of generality that $\init\in\nhd_{\saddles}$.
We may then define the exit time from $\nhd_{\saddles}$ as
\begin{equation}
\label{eq:exit}
T_{\saddles}
	= \inf\setdef{\run\geq\start}{\run\notin\nhd_{\saddles}}.
\end{equation}
We will prove our claim by showing that $T_{\saddles} < \infty$ with probability $1$.

To that end, consider the process
\begin{equation}
\label{eq:increment}
\next[Y]
	= \begin{cases}
		\lyap(\next) - \lyap(\curr)
			&\quad
			\text{if $\run\leq T_{\saddles}$},
		\\
		\curr[\step]
			&\quad
			\text{otherwise},
		\end{cases}
\end{equation}
with $\lyap(\state_{0}) \equiv 0$ by convention.
Heuristically, $\curr[Y]$ measures the change in energy of $\curr$ as long as it remains in $\nhd_{\saddles}$;
subsequently, for book-keeping purposes, it is incremented by a token amount of $\curr[\step]$ per iteration once $\curr$ exits $\nhd_{\saddles}$.
To make this idea more formal, let
\begin{equation}
\curr[\lyap]
	= \sum_{\runalt=\start}^{\run} \iter[Y]
\end{equation}
so
$\curr[\lyap] = \lyap(\curr)$ if $\curr\in\nhd_{\saddles}$
while
$\curr[\lyap] = \Theta(\curr[\proper])$ after $\curr$ exits $\nhd_{\saddles}$.

Assume now that $\curr\in\nhd_{\saddles}$ for all $\run$ (\ie $T_{\saddles}=\infty$).
By \cref{thm:convergence}, every limit point $\olim$ of $\curr$ must be contained in $\saddles$, so, by \eqref{eq:pot-posdef}, we must have $\lim_{\run\to\infty} \curr[\lyap] = 0$.
Hence, to establish our claim, it suffices to show that $\probof{\curr[\lyap]\to0} = 0$.
We will do this by showing that $\curr[Y]$ defined as in \eqref{eq:increment} satisfies the requirements of \cref{lem:Pem-large,lem:Pem-small}.

We begin with the conditions required by \cref{lem:Pem-small} for the case $1/2<\pexp\leq1$:
\begin{enumerate}
\item
For the condition $\abs{\curr[Y]} = \bigoh(1/\run^{\pexp})$ of \cref{lem:Pem-small}, the claim is tautological if $\run > T_{\saddles}$.
Otherwise, if $\run \leq T_{\saddles}$, note that
\begin{equation}
\norm{\next - \curr}
	= \curr[\step] \norm{\curr[\signal]}
	\leq \curr[\step] \bracks{\norm{\nabla\obj(\curr)} + \norm{\curr[\noise]}}
	\leq \curr[\step] (\gbound + \noisepar)
\end{equation}
by \cref{asm:reg,asm:noise} (recall here that we are taking $\qexp=\infty$ in \cref{asm:noise}).
Since $\curr\in\nhd_{\saddles}$ as long as $\run\leq T_{\saddles}$, and given that $\lyap$ is continuously differentiable on $\nhd_{\saddles}$ (and hence Lipschitz continuous therein), we also have:
\begin{equation}
\abs{\lyap(\next) - \lyap(\curr)}
	= \bigoh(\norm{\next - \curr})
	= \bigoh(\curr[\step])
	= \bigoh(1/\run^{\pexp}),
\end{equation}
as claimed.

\item
For the condition $\oneof{\curr[\lyap] > b/\run^{\pexp}} \exof{\next[Y] \given \curr[\filter]} \geq 0$, note first that if $\run > T_{\saddles}$, then  $\curr[Y] = \curr[\step]$, so
\begin{equation}
\label{eq:Pemantle-1}
\oneof{\run > T_{\saddles}} \exof{\next[Y] \given \curr[\filter]}
	\geq \oneof{\run > T_{\saddles}} \curr[\step]
	> 0.
\end{equation}
Otherwise, if $\run\leq T_{\saddles}$, we have $\curr\in\nhd_{\saddles}$, so \cref{prop:energy} yields
\begin{equation}
\label{eq:lyap-bound1}
\next[Y]
	= \lyap(\next) - \lyap(\curr)
	\geq \beta \curr[\step] \lyap(\curr)
		- \curr[\step] \curr[\sbias]
		- 2 \alpha \curr[\step]^{2} (\gbound^{2} + \noisepar^{2}),
\end{equation}
where we set
\begin{equation}
\label{eq:bias}
\curr[\sbias]
	= \nabla^{+}\lyap(\curr) [\curr[\noise]]
\end{equation}
and
 used the estimate $\norm{\curr[\signal]}^{2} = \norm{\nabla\obj(\curr) + \curr[\noise]}^{2} \leq \bracks{\norm{\nabla\obj(\curr)}^{2} + \norm{\curr[\noise]}^{2}} \leq 2(\gbound^{2} + \noisepar^{2})$ (compare also with \eqref{eq:energy-SGD} and the surrounding discussion).
By the conditional Jensen inequality and the definition of $\nabla^{+}\lyap(\point)$, we have
\begin{equation}
\exof{\curr[\sbias] \given \curr[\filter]}
	= \exof{\nabla^{+}\lyap(\curr) [\curr[\noise]] \given \curr[\filter]}
		\geq \nabla^{+}\lyap(\curr) [\exof{\curr[\noise] \given \curr[\filter]}]
		= 0.
\end{equation}
which, in turn, implies that
\begin{equation}
\label{eq:lyap-bound2}
\oneof{\run\leq T_{\saddles}}
	\exof{\next[Y] \given \curr[\filter]}
	\geq \curr[\step]
		\oneof{\run\leq T_{\saddles}}
		\bracks[\big]{\beta\lyap(\curr) - 2\alpha(\gbound^{2} + \noisepar^{2}) \curr[\step]}.
\end{equation}
Hence, taking $b>0$ such that $b\beta/\run^{\pexp} = 2\alpha(\gbound^{2} + \noisepar^{2}) \curr[\step]$, and recalling that $\curr[\lyap] = \lyap(\curr)$ if $\run \leq T_{\saddles}$, we get
\begin{align}
\label{eq:Pemantle-2}
\oneof{\curr[\lyap] > b/\run^{\pexp}}
	\oneof{\run\leq T_{\saddles}}
	\exof{\next[Y] \given \curr[\filter]}
	&\geq \oneof{\curr[\lyap] > b/\run^{\pexp} \wedge \run\leq T_{\saddles}}
		\bracks[\big]{\beta\curr[\lyap] - 2\alpha(\gbound^{2} + \noisepar^{2}) \curr[\step]}
	\notag\\
	&\geq \oneof{\curr[\lyap] > b/\run^{\pexp} \wedge \run\leq T_{\saddles}}
		\bracks[\big]{b\beta/\run^{\pexp} - 2\alpha(\gbound^{2} + \noisepar^{2}) \curr[\step]}
	\notag\\
	&\geq 0. 
\end{align}
Thus, combining the above, we conclude that the specific conditions required to apply \cref{lem:Pem-small} are satisfied.
\end{enumerate}

We are left to establish the general condition \eqref{eq:energy-unst} which is required to apply both \cref{lem:Pem-small,lem:Pem-large};
the proof is the same for all $\pexp\in(0,1]$, so we no longer assume $1/2 < \pexp \leq 1$ below.
To begin, note that
\begin{align}
\exof{\next[\lyap]^{2} - \curr[\lyap]^{2} \given \curr[\filter]}
	&= \exof{\next[Y]^{2} \given \curr[\filter]}
		+ 2 \curr[\lyap] \exof{\next[Y] \given \curr[\filter]}
	\notag\\
	&= \exof{\next[Y]^{2} \given \curr[\filter]}
		+ 2 \curr[\lyap] \oneof{\curr[\lyap] \leq b/\run^{\pexp}} \exof{\next[Y] \given \curr[\filter]}
	\notag\\
	&\hspace{3em}
		+ 2 \curr[\lyap] \oneof{\curr[\lyap] > b/\run^{\pexp}} \exof{\next[Y] \given \curr[\filter]}
	\notag\\
	&\geq \exof{\next[Y]^{2} \given \curr[\filter]}
		+ 2 \curr[\lyap] \oneof{\curr[\lyap] \leq b/\run^{\pexp}} \exof{\next[Y] \given \curr[\filter]},
\end{align}
where, in the last line, we used the inequalities proved in the previous paragraph, namely \eqref{eq:Pemantle-1} and \eqref{eq:Pemantle-2}.
To proceed, recall that $\curr[Y] = \curr[\step] > 0$ if $\run > T_{\saddles}$, so, by \eqref{eq:lyap-bound2} we get
\begin{align}
\exof{\next[\lyap]^{2} - \curr[\lyap]^{2} \given \curr[\filter]}
	&\geq \exof{\next[Y]^{2} \given \curr[\filter]}
	\notag\\
	&\hspace{3em}
		+ 2 \curr[\lyap]
			\oneof{\curr[\lyap] \leq b/\run^{\pexp}}
			\oneof{\run \leq T_{\saddles}}
			\exof{\next[Y] \given \curr[\filter]}
	\notag\\
	&\geq \exof{\next[Y]^{2} \given \curr[\filter]}
	\notag\\
	&\hspace{3em}
		+ 2 \curr[\step] \curr[\lyap]
			\oneof{\curr[\lyap] \leq b/\run^{\pexp}}
			\oneof{\run \leq T_{\saddles}}
			\bracks[\big]{\beta\curr[\lyap] - 2\alpha(\gbound^{2} + \noisepar^{2}) \curr[\step]}
	\notag\\
	&\geq \exof{\next[Y]^{2} \given \curr[\filter]}
		- 2 \curr[\step] \cdot b/\run^{\pexp} \cdot 2\alpha(\gbound^{2} + \noisepar^{2}) \curr[\step]
	\notag\\
	&= \exof{\next[Y]^{2} \given \curr[\filter]}
		- 4 \alpha^{2} \beta^{-1} (\gbound^{2} + \noisepar^{2})^{2} \curr[\step]^{3}.
\end{align}

In view of the above, to establish \eqref{eq:energy-unst}, it suffices to show that $\exof{\next[Y]^{2} \given \curr[\filter]} \geq B\curr[\step]^{2}$ for some $B>0$ and sufficiently large $\run$.
In this regard, Jensen's inequality gives
\begin{equation}
\exof{\next[Y]^{2} \given \curr[\filter]}
	\geq \exof{\next[Y]^{+} \given \curr[\filter]}^{2}
\end{equation}
so it suffices to show that $\exof{\next[Y]^{+} \given \curr[\filter]} = \Omega(\curr[\step])$.
This is trivial if $\run > T_{\saddles}$, so we are left to treat the case $\run \leq T_{\saddles}$.
For this case, \eqref{eq:lyap-bound1} gives
\begin{equation}
\label{eq:pospart-bound}
\oneof{\run \leq T_{\saddles}} \exof{\next[Y]^{+} \given \curr[\filter]}
	\geq \oneof{\run \leq T_{\saddles}} \curr[\step] \exof{\curr[\sbias]^{-} \given \curr[\filter]}
		- 2 \oneof{\run \leq 2T_{\saddles}} \alpha (\gbound^{2} + \noisepar^{2}) \curr[\step]^{2}
\end{equation}
meaning that we need to focus on the expectation $\exof{\curr[\sbias]^{-} \given \curr[\filter]}$.

We consider two further cases (this is where \cref{asm:noise} kicks in and plays a crucial role).
First, if $\curr\notin\mfld$, \cref{prop:energy} and \cref{asm:exciting} applied to $\vvec = -\nabla\lyap(\curr) / \norm{\nabla\lyap(\curr)}$ give
\begin{align}
\oneof{\run \leq T_{\saddles} \wedge \curr\notin\mfld}
	\exof{\curr[\sbias]^{-} \given \curr[\filter]}
	&= \oneof{\run \leq T_{\saddles} \wedge \curr\notin\mfld}
		\exof{ \braket{-\nabla\lyap(\curr)}{\curr[\noise]}^{+} \given \curr[\filter]}
	\notag\\
	&\geq \oneof{\run \leq T_{\saddles} \wedge \curr\notin\mfld}
		\cdot \const \norm{\nabla\lyap(\curr)}
	\notag\\
	&\geq \beta \const \oneof{\run \leq T_{\saddles} \wedge \curr\notin\mfld}.
\end{align}
Otherwise, if $\curr\in\mfld$ (which, heuristically, should only happen with probability $0$), choose a unit normal vector $\curr[\unitvec]$ such that
\begin{equation}
\braket{\curr[\unitvec]}{\tvec}
	= 0
	\quad
	\text{for all $\tvec\in\tangent{\curr}{\mfld}$}.
\end{equation}
Since the projector $\proxmap_{\curr}$ defined in \eqref{eq:proj-fiber} takes values in $\tangent{\curr}{\mfld}$, we will have $\braket{\curr[\unitvec]}{\prox{\curr}{\curr[\noise]}} = 0$, and hence:
\begin{equation}
\label{eq:normal}
\braket{\curr[\unitvec]}{\curr[\noise]}
	= \braket{\curr[\unitvec]}{\curr[\noise] - \prox{\curr}{\curr[\noise]}}.
\end{equation}
Therefore, by \cref{prop:energy}, we get the chain of inequalities:
\usetagform{comment}
\begin{align}
\exof{ \bracks{\nabla^{+}\lyap(\curr)[\curr[\noise]]}^{-} \given \curr[\filter]}
	&\geq \beta \exof{ \norm{\prox{\curr}{\curr[\noise]} - \curr[\noise]} \given \curr[\filter]}
	\tag{by \cref{prop:energy}}
	\\
	&\geq \beta \exof{ \braket{\curr[\unitvec]}{\curr[\noise] - \prox{\curr}{\curr[\noise]}}^{+} \given \curr[\filter]}
	\tag{by Cauchy\textendash Schwarz}
	\\
	&= \beta \exof{ \braket{\curr[\unitvec]}{\curr[\noise]}^{+} \given \curr[\filter]}
	\tag{by (\ref{eq:normal})}
	\\
	&\geq \beta \const
	\tag{by \cref{asm:exciting}}
\end{align}
\usetagform{default}%
valid on the event $\{\run \leq T_{\saddles} \wedge \curr\in\mfld\}$.

Putting together all of the above, we finally get
\begin{equation}
\oneof{\run \leq T_{\saddles}}
	\exof{\curr[\sbias]^{-} \given \curr[\filter]}
	\geq \oneof{\run \leq T_{\saddles}} \beta\const
\end{equation}
and hence, by \eqref{eq:pospart-bound}:
\begin{equation}
\exof{\next[Y]^{+} \given \curr[\filter]}
	\geq \beta\const \curr[\step]
		- 2\alpha(\gbound^{2} + \noisepar^{2}) \curr[\step]^{2}
	= \Omega(\curr[\step])
\end{equation}
on the event $\{\run \leq T_{\saddles}\}$.
This completes our proof.
\end{proof}

\section{Rates of convergence}
\label{app:rates}

Our aim in this appendix is to establish the rate of convergence of \eqref{eq:SGD} to local minima that are regular in the sense of Hurwicz, \ie $\hmat(\sol) \mg 0$.
For convenience, we restate the relevant result below:

\rate*

\para{Auxiliary results}
The proof of \cref{thm:rate} requires several ancillary results, which we state and prove below.
The first is a lemma on numerical sequences, usually attributed to \citet{Chu54}:

\begin{lemma}[\citealp{Chu54}, Lemma 1]
\label{lem:Chung}
Let $\curr[a]$, $\run=\running$, be a non-negative sequence such that
\begin{equation}
\label{eq:Chung-cond}
\next[a]
	\leq \bracks*{1 - \frac{\pcoef}{(\run+\offset)^{\pexp}}} \curr[a]
		+ \frac{\rcoef}{(\run+\offset)^{\pexp+\rexp}}
\end{equation}
where $\pexp\in(0,1]$, $\rexp>0$ and $P,R>0$.
Then:
\begin{subequations}
\label{eq:Chung-bound}
\begin{enumerate}
\item
If $\pexp<1$, we have
\begin{equation}
\label{eq:Chung-slow}
\curr[a]
	\leq \frac{\rcoef}{\pcoef} \frac{1}{\run^{\rexp}}
	+ o\parens*{\frac{1}{\run^{\rexp}}}.
\end{equation}
\item
If instead $\pexp=1$ and $\pcoef > \rexp$, we have
\begin{equation}
\label{eq:Chung-fast}
\curr[a]
	\leq \frac{\rcoef}{\pcoef - \rexp} \frac{1}{\run}
	+ o\parens*{\frac{1}{\run}}.
\end{equation}
\end{enumerate}
\end{subequations}
\end{lemma}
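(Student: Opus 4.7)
The plan is to reparametrize the sequence and reduce the statement to a single ``trapping'' argument that covers both cases simultaneously. Set $b_n = (n+m)^r a_n$, so the two conclusions amount to $\limsup b_n \leq R/P$ (case $p<1$) and $\limsup b_n \leq R/(P-r)$ (case $p=1$). The first task is to derive a clean recursion for $b_n$ by multiplying the hypothesis by $(n+m+1)^r$ and expanding
\begin{equation*}
\bigl(1 + \tfrac{1}{n+m}\bigr)^{r}
= 1 + \frac{r}{n+m} + O\!\bigl(\tfrac{1}{(n+m)^{2}}\bigr).
\end{equation*}
This yields
\begin{equation*}
b_{n+1}
\leq \Bigl[1 + \tfrac{r}{n+m} + O\!\bigl(\tfrac{1}{(n+m)^{2}}\bigr)\Bigr]
     \Bigl[1 - \tfrac{P}{(n+m)^{p}}\Bigr] b_{n}
     + \frac{R}{(n+m)^{p}} + O\!\bigl(\tfrac{1}{(n+m)^{p+1}}\bigr).
\end{equation*}

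Next I would bundle this into the universal shape $b_{n+1} \leq (1 - \alpha_n) b_n + \beta_n$. When $p < 1$, the cross-term $r/(n+m)$ is negligible relative to $P/(n+m)^p$, so, after absorbing lower-order remainders, for every $P' \in (0, P)$ and every $R' > R$ there exists $n_0$ with
\begin{equation*}
b_{n+1} \leq \Bigl[1 - \tfrac{P'}{(n+m)^{p}}\Bigr] b_{n} + \tfrac{R'}{(n+m)^{p}} \quad (n \geq n_0).
\end{equation*}
When $p = 1$, the $r/(n+m)$ term is of the same order as $P/(n+m)$ and actively erodes the contracting factor, yielding
\begin{equation*}
b_{n+1} \leq \Bigl[1 - \tfrac{P'}{n+m}\Bigr] b_{n} + \tfrac{R'}{n+m} \quad (n \geq n_0),
\end{equation*}
valid for any $P' \in (0, P-r)$ and $R' > R$. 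Note that the standing assumption $P>r$ is precisely what makes $P' > 0$ possible here.

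The core of the proof is then the following elementary ``trap'' argument applied to this unified recursion with $\alpha_n = P'/(n+m)^{\pexp}$ and $\beta_n = R'/(n+m)^{\pexp}$: since $p \leq 1$, we have $\sum \alpha_n = \infty$ and $\beta_n/\alpha_n = R'/P'$. Fix $\eta > 0$ and set $L = R'/P' + 2\eta$. If $b_n > L$, then $\beta_n = \alpha_n \cdot R'/P' \leq \alpha_n (b_n - 2\eta)$, giving $b_{n+1} \leq b_n - 2\eta\, \alpha_n$; if $b_n \leq L$, then $b_{n+1} \leq (1-\alpha_n) L + \alpha_n R'/P' \leq L$. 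So once $b_n$ falls below $L$ it stays below, and if it never did, the summable-decrement argument together with $\sum \alpha_n = \infty$ and $b_n \geq 0$ yields a contradiction. Hence $\limsup b_n \leq R'/P' + 2\eta$, and letting $\eta \downarrow 0$, $R' \downarrow R$, $P' \uparrow P$ (resp. $P' \uparrow P-r$) gives the claimed bounds. Multiplying $\limsup b_n \leq R/P$ by $(n+m)^{-r}/n^{-r} \to 1$ returns the stated asymptotic for $a_n$, with $o(1/n^r)$ absorbing the difference between $\limsup$ and $\sup_{n\geq N}$.

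The main obstacle is the case $p=1$: one must be scrupulous about the $r/(n+m)$ contribution from $(n+m+1)^r$, which is the sole source of the improvement from $P$ to $P-r$ in the denominator and is what forces the hypothesis $P>r$. Everything else, including the step from $\limsup b_n \leq R/P'$ (for arbitrary $P'<P$) to $\limsup b_n \leq R/P$, and the passage from $(n+m)^r a_n$ to $n^r a_n$, is a routine squeeze.
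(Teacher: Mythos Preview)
The paper does not supply a proof of this lemma: it is quoted verbatim from \citet{Chu54} and used as a black box in the proof of \cref{thm:rate}. So there is no ``paper's proof'' to compare against; your proposal is a self-contained argument where the paper offers none.

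Your argument is correct. The substitution $b_{\run} = (\run+\offset)^{\rexp} a_{\run}$ followed by a trapping/absorption argument is the standard way Chung's lemma is established, and your handling of the two regimes is accurate: for $\pexp<1$ the growth factor $r/(\run+\offset)$ coming from $(\run+\offset+1)^{\rexp}/(\run+\offset)^{\rexp}$ is of strictly lower order than the contraction $P/(\run+\offset)^{\pexp}$ and can be absorbed into any $P'<P$, whereas for $\pexp=1$ the two are of the same order and you correctly identify that the net contraction rate degrades to $P-r$, which is precisely why the hypothesis $P>r$ is needed. The trap argument (once below $L=R'/P'+2\eta$, stay below; otherwise decrease by $2\eta\alpha_{\run}$, which sums to infinity) is clean and complete, and your passage from $\limsup b_{\run}\leq C$ to $a_{\run}\leq C/\run^{\rexp}+o(1/\run^{\rexp})$ is justified by taking $e_{\run}=\max(0,a_{\run}-C/\run^{\rexp})$.

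One small observation: for $\pexp=1$ your argument actually yields $a_{\run}\leq \frac{R}{P-r}\frac{1}{\run^{\rexp}}+o(1/\run^{\rexp})$, which is Chung's original conclusion. The paper's displayed bound \eqref{eq:Chung-fast} writes $1/\run$ rather than $1/\run^{\rexp}$; this is harmless in the paper's sole application (where $\rexp=\pexp$, so $\rexp=1$ when $\pexp=1$), and your version is the correct general statement.
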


The next ingredient of the proof of \cref{thm:rate} provides a handle on the local behavior of $\obj$ near a regular minimizer:

\begin{lemma}
\label{lem:Minty}
Let $\sol$ be a regular minimum of $\obj$.
Then,
there exists a convex compact neighborhood $\cpt$ of $\sol$ and constants $\qmin,\qmaj > 0$ \textpar{possibly depending on $\cpt$} such that
\begin{equation}
\label{eq:qbounds}
\qmin \norm{\point - \sol}^{2}
	\leq \braket{\nabla\obj(\point)}{\point - \sol}
	\leq \qmaj \norm{\point - \sol}^{2}
	\quad
	\text{for all $\point\in\cpt$}.
\end{equation}
\end{lemma}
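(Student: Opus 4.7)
The plan is to Taylor-expand $\nabla\obj$ around $\sol$, exploit the positive definiteness of $\hmat(\sol)$, and promote this pointwise fact to uniform two-sided bounds on a small convex neighborhood by continuity of the Hessian. This is essentially a quantitative quadratic-approximation argument, and no deep input is required beyond the differentiability of $\obj$ near $\sol$ and the Hurwicz condition $\hmat(\sol) \mg 0$.

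First, I would use $\nabla\obj(\sol) = 0$ together with the fundamental theorem of calculus applied along the segment from $\sol$ to $\point$:
\[
\nabla\obj(\point) = \int_{0}^{1} \hmat(\sol + t(\point - \sol))(\point - \sol) \dd t,
\]
which, after pairing with $\point - \sol$, yields
\[
\braket{\nabla\obj(\point)}{\point - \sol} = \int_{0}^{1} \braket{\hmat(\sol + t(\point - \sol))(\point - \sol)}{\point - \sol} \dd t,
\]
valid as long as the segment stays in the region where $\hmat$ is defined and continuous.

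Next, let $\lambda_{-}$ and $\lambda_{+}$ denote the smallest and largest eigenvalues of $\hmat(\sol)$, both strictly positive by regularity. Continuity of $\hmat$ then provides a closed Euclidean ball $\cpt$ centered at $\sol$ on which $\norm{\hmat(\wpoint) - \hmat(\sol)}_{\mathrm{op}} \leq \lambda_{-}/2$ for every $\wpoint \in \cpt$. A standard Rayleigh-quotient perturbation argument (namely $\abs{\braket{\hmat(\wpoint)\vvec}{\vvec} - \braket{\hmat(\sol)\vvec}{\vvec}} \leq (\lambda_{-}/2)\norm{\vvec}^{2}$) then gives, for all $\wpoint\in\cpt$ and $\vvec\in\vecspace$,
\[
(\lambda_{-}/2)\,\norm{\vvec}^{2} \leq \braket{\hmat(\wpoint) \vvec}{\vvec} \leq (\lambda_{+} + \lambda_{-}/2)\,\norm{\vvec}^{2}.
\]

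Finally, convexity of $\cpt$ ensures that the entire segment from $\sol$ to $\point$ remains in $\cpt$ whenever $\point \in \cpt$, so I can substitute the pointwise bound above into the integral representation with $\vvec = \point - \sol$ and conclude \eqref{eq:qbounds} with $\qmin = \lambda_{-}/2$ and $\qmaj = \lambda_{+} + \lambda_{-}/2$. I do not expect a genuine obstacle here: the only care needed is to shrink $\cpt$ enough so that $\hmat$ remains defined and continuous throughout and the perturbation bound holds, both of which follow automatically from $\obj$ being twice continuously differentiable near $\sol$.
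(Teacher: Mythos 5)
Your proposal is correct and follows essentially the same route as the paper: write $\braket{\nabla\obj(\point)}{\point-\sol}$ via the fundamental theorem of calculus using $\nabla\obj(\sol)=0$, then bound the Hessian quadratic form uniformly on a convex compact neighborhood where $\hmat \mg 0$. The only cosmetic differences are that you obtain the uniform bounds via an explicit operator-norm perturbation of $\hmat(\sol)$ (yielding concrete constants) and bound the integrand directly, whereas the paper appeals to compactness and the mean value theorem for integrals.
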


\begin{proof}
Let $\cpt$ be a sufficiently small convex compact neighborhood of $\sol$ such that $\hmat(\point) \mg 0$ for all $\point\in\cpt$ (that such a neighborhood exists is a consequence of the regularity of $\sol$ and the smoothness of $\obj$).
Then, by compactness, there exist constants $\qmin,\qmaj$ such that $\qmin \eye \mleq \hmat(\point) \mleq \qmaj\eye$ for all $\point\in\cpt$.
Moreover, for all $\point\in\cpt$, we have
\begin{equation}
\nabla\obj(\point)
	= (\point - \sol)^{\top} \int_{0}^{1} \hmat(\sol + \ctime (\point - \sol)) \dd\ctime,
\end{equation}
where we used the fact that $\nabla\obj(\sol) = 0$ (since $\sol$ is a minimizer of $\obj$).
Hence, multiplying both sides by $\point - \sol$, the mean value theorem for integrals yields:
\begin{align}
\braket{\nabla\obj(\point)}{\point - \sol}
	&= \int_{0}^{1}
		(\point - \sol)^{\top}
		\hmat(\sol + \ctime (\point - \sol))
		(\point - \sol)
	 \dd\ctime
	 \notag\\
	 &= (\point - \sol)^{\top}
		\hmat(\pointalt)
		(\point - \sol)
\end{align}
for some $\pointalt \in [\sol,\point]$.
Since $\qmin \eye \mleq \hmat(\pointalt) \mleq \qmaj\eye$, our claim follows.
\end{proof}

Thanks to \cref{lem:Minty}, we obtain the following recursive estimate for \eqref{eq:SGD}:

\begin{proposition}
\label{prop:dist-loc}
Let $\sol$ be a regular minimum of $\obj$ and let $\cpt$ and $\qmin$ be as in \cref{lem:Minty}.
Assume moreover that $\curr\in\cpt$ for some $\run \geq 1$ and let
\begin{equation}
\label{eq:dist-curr}
\curr[\breg]
	= \frac{1}{2} \norm{\curr - \sol}^{2}.
\end{equation}
We then have:
\begin{equation}
\label{eq:dist-loc}
\next[\breg]
	\leq (1 - 2\qmin\curr[\step]) \curr[\breg]
	+ \curr[\step] \curr[\snoise]
	+ \tfrac{1}{2} \curr[\step]^{2} \norm{\curr[\signal]}^{2},
\end{equation}
where $\curr[\snoise] = -\braket{\curr[\noise]}{\curr - \sol}$ is a \acl{MDS}.
\end{proposition}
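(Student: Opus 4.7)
The plan is to establish the recursion by a direct expansion of the squared distance along the \eqref{eq:SGD} update, combined with the Minty-type lower bound furnished by \cref{lem:Minty}. Concretely, I will start from $\next = \curr - \curr[\step] \curr[\signal]$, subtract $\sol$ on both sides, and expand the square:
\begin{equation*}
\norm{\next - \sol}^{2}
    = \norm{\curr - \sol}^{2}
    - 2 \curr[\step] \braket{\curr[\signal]}{\curr - \sol}
    + \curr[\step]^{2} \norm{\curr[\signal]}^{2}.
\end{equation*}
Dividing by $2$ gives the identity $\next[\breg] = \curr[\breg] - \curr[\step] \braket{\curr[\signal]}{\curr - \sol} + \tfrac{1}{2} \curr[\step]^{2} \norm{\curr[\signal]}^{2}$, which already has the correct ``quadratic'' term and isolates the inner product to be controlled.

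Next I would decompose the oracle signal as $\curr[\signal] = \nabla\obj(\curr) + \curr[\noise]$ per \eqref{eq:signal}, so that
\begin{equation*}
\braket{\curr[\signal]}{\curr - \sol}
    = \braket{\nabla\obj(\curr)}{\curr - \sol}
    + \braket{\curr[\noise]}{\curr - \sol}.
\end{equation*}
The deterministic part is handled by \cref{lem:Minty}: since $\curr\in\cpt$ by hypothesis, the left inequality in \eqref{eq:qbounds} yields $\braket{\nabla\obj(\curr)}{\curr - \sol} \geq \qmin \norm{\curr - \sol}^{2} = 2\qmin\curr[\breg]$. Substituting this lower bound back and setting $\curr[\snoise] = -\braket{\curr[\noise]}{\curr - \sol}$ produces exactly \eqref{eq:dist-loc}.

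It remains to verify that $\curr[\snoise]$ is a \acl{MDS} with respect to the natural filtration $\curr[\filter] = \sigma(\state_{\start},\dotsc,\curr)$. Since $\curr - \sol$ is $\curr[\filter]$-measurable and \cref{asm:noise}\eqref{eq:zeromean} gives $\exof{\curr[\noise] \given \curr[\filter]} = 0$, we have
\begin{equation*}
\exof{\curr[\snoise] \given \curr[\filter]}
    = -\braket{\exof{\curr[\noise] \given \curr[\filter]}}{\curr - \sol}
    = 0,
\end{equation*}
which is the \ac{MDS} property.

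There is really no ``hard step'' here: the argument is a one-line energy-type expansion plus an application of \cref{lem:Minty} and the zero-mean property of the oracle noise. The only subtlety worth flagging is that the Minty-type lower bound is purely local \textendash\ it requires $\curr\in\cpt$ \textendash\ which is why the recursion is stated conditionally on that event and why the later parts of the proof of \cref{thm:rate} must separately control the stochastic stability event $\event_{\nhd}$ to actually invoke this recursion along the entire trajectory.
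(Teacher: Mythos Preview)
Your proposal is correct and follows essentially the same route as the paper's proof: expand the squared distance along the \eqref{eq:SGD} step, split $\curr[\signal] = \nabla\obj(\curr) + \curr[\noise]$, apply the left inequality in \eqref{eq:qbounds} from \cref{lem:Minty} to the deterministic inner product, and verify the \ac{MDS} property of $\curr[\snoise]$ via the $\curr[\filter]$-measurability of $\curr - \sol$ and \cref{asm:noise}.
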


\begin{proof}
Recall first that $\next = \curr - \curr[\step] (\nabla\obj(\curr) + \curr[\noise])$ where $\curr[\noise]$ is the gradient error at $\curr$.
Then, by the definition of $\curr[\breg]$, we have:
\begin{align}
\next[\breg]
	= \tfrac{1}{2} \norm{\next - \sol}^{2}
	&= \tfrac{1}{2} \norm{\curr - \sol - \curr[\step] \curr[\signal]}^{2}
	\notag\\
	&= \tfrac{1}{2} \norm{\curr - \sol}^{2}
		- \curr[\step] \braket{\curr[\signal]}{\curr - \sol}
		+ \tfrac{1}{2} \curr[\step]^{2} \norm{\curr[\signal]}^{2}
	\notag\\
	&= \curr[\breg]
		- \curr[\step] \braket{\nabla\obj(\curr)}{\curr - \sol}
		- \curr[\step] \braket{\curr[\noise]}{\curr - \sol}
		+ \tfrac{1}{2}\curr[\step]^{2} \norm{\curr[\signal]}^{2}
	\notag\\
	&\leq \curr[\breg]
		- \qmin \curr[\step] \norm{\curr - \sol}^{2}
		+ \curr[\step] \curr[\snoise]
		+ \tfrac{1}{2}\curr[\step]^{2} \norm{\curr[\signal]}^{2}
	\notag\\
	&= (1 - 2\qmin\curr[\step]) \curr[\breg]
		+ \curr[\step] \curr[\snoise]
		+ \tfrac{1}{2}\curr[\step]^{2} \norm{\curr[\signal]}^{2}
\end{align}
where the second-to-last line follows from \cref{lem:Minty}.
Since $\exof{\curr[\snoise] \given \curr[\filter]} = \braket{\exof{\curr[\noise] \given \curr[\filter]}}{\curr - \sol} = 0$, our claim follows (recall here that, by definition, $\curr[\noise]$ is not $\curr[\filter]$-measurable but $\curr$ is).
\end{proof}

With these basic results at our disposal, the proof of \cref{thm:rate} will roughly follow the technical trajectory outlined below:
\begin{enumerate}
\item
By \cref{prop:dist-loc}, $\curr[\breg]$ grows at most by $\curr[\step] \curr[\snoise] + \tfrac{1}{2} \curr[\step]^{2} \norm{\curr[\signal]}^{2}$ at each step.
This quantity can be big for any given $\run$ but we will show that, with high probability (and, in particular, with probability at least $1-\delta$), the aggregation of these errors remains controllably small.
This will be the most technical and involved part of our argument.

\item
Using the above, we will show that, with probability at least $1-\delta$, $\curr[\breg]$ cannot grow more than a token quantity $\eps$.
As a result, if the initial distance to $\sol$ is not too big, $\curr$ will remain in a neighborhood thereof for all time.

\item
For the final part of the theorem, we will condition on this event to map \eqref{eq:dist-loc} to a recursion of the form \eqref{eq:Chung-cond}, and we will subsequently employ \cref{lem:Chung} to obtain the stated result.
The main problem here is that, after conditioning, the noise in \eqref{eq:dist-loc} is no longer zero-mean, so we will need to adapt our analysis to the new noise distribution.
\end{enumerate}
We make all this precise below.
For convenience, we focus on the case $\pexp>1/2$;
the case $\pexp\in(2/(\qexp+2),1/2]$ follows by modifying the arguments that follow with the Hölder estimates we introduced in the proof of \cref{lem:subsequence}.

\para{Controlling the error terms}

We begin by encoding the error terms in \eqref{eq:dist-loc} as
\begin{align}
\label{eq:err-mart}
\curr[M]
	&= \sum_{\runalt=\start}^{\run} \iter[\step] \iter[\snoise]
\shortintertext{and}
\label{eq:err-subm}
\curr[S]
	&= \frac{1}{2} \sum_{\runalt=\start}^{\run} \iter[\step]^{2} \norm{\iter[\signal]}^{2}
\end{align}
Since $\exof{\curr[\snoise] \given \curr[\filter]} = 0$, we have $\exof{\curr[M] \given \curr[\filter]} = \prev[M]$, so $\curr[M]$ is a zero-mean martingale;
likewise, $\exof{\curr[S] \given \curr[\filter]} \geq \prev[S]$, so $\curr[S]$ is a submartingale.
Interestingly, even though $\curr[M]$ is more ``neutral'' as an error (because $\curr[\snoise]$ is zero-mean), it is more difficult to control because the variance of its increments is
\begin{equation}
\exof{\norm{\curr[\step] \curr[\snoise]}^{2} \given \curr[\filter]}
	= \curr[\step]^{2} \exof{\braket{\curr[\noise]}{\curr - \sol}^{2} \given \curr[\filter]}
\end{equation}
and this last quantity can become arbitrarily big if $\curr$ does not remain in the vicinity of $\sol$ (which is what we are trying to prove).
Because of this, we need to take a less direct, step-by-step approach to bound the total error increments \emph{conditioned} on the event that $\curr$ remains close to $\sol$.
Our approach builds on a range of ideas and techniques due to \citet{HIMM19,HIMM20} and \citet{MZ19}.

We begin by introducing the ``cumulative mean square'' error
\begin{equation}
\label{eq:err-tot}
\curr[R]
	= \curr[M]^{2} + \curr[S].
\end{equation}
By construction, we have
\begin{align}
\label{eq:err-tot-upd}
\curr[R]
	&= (\prev[M] + \curr[\step]\curr[\snoise])^{2}
		+ \prev[S] + \tfrac{1}{2} \curr[\step]^{2} \norm{\curr[\signal]}^{2}
	\notag\\
	&= \prev[R]
		+ 2 \prev[M] \curr[\step] \curr[\snoise]
		+ \curr[\step]^{2} \curr[\snoise]^{2}
		+ \tfrac{1}{2} \curr[\step]^{2} \norm{\curr[\signal]}^{2}
\end{align}
and hence, after taking expectations:
\begin{align}
\exof{\curr[R] \given \curr[\filter]}
	&= \prev[R]
		+ 2 \prev[M] \curr[\step]  \exof{\curr[\snoise] \given \curr[\filter]}
		+ \curr[\step]^{2} \exof{
			\curr[\snoise]^{2}
			+ \tfrac{1}{2} \norm{\curr[\signal]}^{2}
			\given \curr[\filter] }
	\geq \prev[R]
\end{align}
\ie $\curr[R]$ is a submartingale.
To condition it further, let $\nhd$ be a neighborhood of $\sol$, let $\eps>0$,
and define the events
\begin{align}
\label{eq:evt-stay}
\curr[\event]
	\equiv \curr[\event](\nhd)
	&= \braces*{\curr\in\nhd \; \text{for all $\runalt=\running,\run$}}
\shortintertext{and}
\label{eq:evt-small}
\curr[\eventalt]
	\equiv \curr[\eventalt](\eps)
	&= \braces*{\iter[R] \leq \eps \; \text{for all $\runalt = \running,\run$}}.
\end{align}
By definition, we also have $\event_{0} = \eventalt_{0} = \samples$ (because the set-building index set for $\runalt$ is empty in this case, and every statement is true for the elements of the empty set).
These events will play a crucial role in the sequel as indicators of whether $\curr$ has escaped the vicinity of $\sol$ or not.

To proceed, we will instantiate $\nhd$ and $\eps$ in the definition of $\event$ and $\eventalt$ respectively as follows.
First, for \eqref{eq:evt-stay}, we will choose a neighborhood $\nhd$ contained in the convex compact neighborhood $\cpt$ of $\sol$ (whose existence is guaranteed by \cref{lem:Minty});
in particular, this implies that \eqref{eq:qbounds} holds for all $\point\in\nhd$.
Moreover, with a fair degree of hindsight, we will also choose $\eps>0$ such that
\begin{equation}
\label{eq:eps-bound}
	\setdef{\point\in\points}{\norm{\point - \sol}^{2} \leq 4\eps + 2\sqrt{\eps}}
	\subseteq \nhd.
\end{equation}
and we will assume that $\init$ is initialized in a neighborhood $\nhd_{1} \subseteq \nhd$ such that
\begin{equation}
\label{eq:nhd-init}
\nhd_{1}
	\subseteq \setdef{\point\in\points}{\norm{\point - \sol}^{2} \leq 2 \eps}
\end{equation}
These will be the neighborhoods $\nhd$ and $\nhd_{1}$ whose existence is postulated by \cref{thm:rate}.
Then, with all this in hand, we have:

\begin{lemma}
\label{lem:events}
Let $\sol$ be a regular minimizer of $\obj$ as above and assume that \cref{asm:noise} holds.
Then, for all $\run=\running$, we have:
\begin{enumerate}
\item
$\next[\event] \subseteq \curr[\event]$
and
$\next[\eventalt] \subseteq \curr[\eventalt]$.
\item
$\prev[\eventalt] \subseteq \curr[\event]$.
\item
Consider the ``large noise'' event
\begin{align}
\label{eq:evt-bad}
\curr[\tilde\eventalt]
	&\equiv \prev[\eventalt] \setminus \curr[\eventalt]
	= \prev[\eventalt] \cap \{\curr[R] > \eps\}
	\notag\\
	&= \braces*{\text{$\iter[R] \leq \eps$ for all $\runalt=\running,\run-1$ and $\curr[R] > \eps$}},
\end{align}
and
let $\curr[\tilde R] = \curr[R] \one_{\prev[\eventalt]}$ denote the cumulative error subject to the noise being ``small'' until time $\run$.
Then:
\begin{equation}
\label{eq:err-tot-cond}
\exof{\curr[\tilde R]}
	\leq \exof{\prev[\tilde R]}
		+ \bracks{\gbound^{2} + (1+\radius_{\nhd}^{2})\noisevar} \curr[\step]^{2}
		- \eps \probof{\prev[\tilde\eventalt]},
\end{equation}
where $\radius_{\nhd} = \sup_{\point\in\nhd} \norm{\point - \sol}$
and, by convention,
we write $\tilde\eventalt_{0} = \varnothing$ and $\tilde R_{0} = 0$.
\end{enumerate}
\end{lemma}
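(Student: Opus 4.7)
My plan is to address the three claims in order, with the main effort concentrated on Part 3. Part 1 is tautological: the events in \eqref{eq:evt-stay} and \eqref{eq:evt-small} are conjunctions over index sets that grow with $\run$, so passing from index $\run$ to $\run+1$ only adds constraints and hence shrinks the event. For Part 2, I would proceed by induction on $\run$, with the base case $\eventalt_{0} = \samples \subseteq \event_{1}$ following from the standing assumption $\init \in \nhd_{1} \subseteq \nhd$. For the inductive step, on $\curr[\eventalt]$ Part 1 gives $\curr[\eventalt] \subseteq \prev[\eventalt]$, so by the inductive hypothesis $\iter \in \nhd \subseteq \cpt$ for all $\runalt \leq \run$; hence \cref{prop:dist-loc} applies at each step, and dropping the non-positive contraction term $-2\qmin\iter[\step]\iter[\breg]$ in \eqref{eq:dist-loc} and telescoping yields
\begin{equation*}
\next[\breg]
    \leq \init[\breg] + \curr[M] + \curr[S]
    \leq \eps + \sqrt{\eps} + \eps,
\end{equation*}
where I used $\init[\breg] \leq \eps$ (from \eqref{eq:nhd-init}), $\abs{\curr[M]} \leq \sqrt{\curr[R]} \leq \sqrt{\eps}$, and $\curr[S] \leq \curr[R] \leq \eps$ on $\curr[\eventalt]$. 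Thus $\norm{\next - \sol}^{2} \leq 4\eps + 2\sqrt{\eps}$, and \eqref{eq:eps-bound} forces $\next \in \nhd$, closing the induction.

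For Part 3, I would start from the update formula \eqref{eq:err-tot-upd}. The random variables $\prev[R]$ and $\one_{\prev[\eventalt]}$ are both $\curr[\filter]$-measurable, since each $\iter[\signal]$ with $\runalt \leq \run - 1$ is determined by $\state_{1}, \dotsc, \curr$ through \eqref{eq:SGD}. Multiplying \eqref{eq:err-tot-upd} by $\one_{\prev[\eventalt]}$ and conditioning on $\curr[\filter]$ kills the martingale cross-term because $\exof{\curr[\snoise] \given \curr[\filter]} = 0$; meanwhile, the Part 2 inclusion $\prev[\eventalt] \subseteq \curr[\event]$ gives $\norm{\curr - \sol} \leq \radius_{\nhd}$ on $\prev[\eventalt]$, so \cref{asm:noise} and Cauchy\textendash Schwarz yield
\begin{equation*}
\exof{\curr[\snoise]^{2} \given \curr[\filter]} \one_{\prev[\eventalt]}
    \leq \noisevar \radius_{\nhd}^{2} \one_{\prev[\eventalt]}
    \quad\text{and}\quad
\exof{\norm{\curr[\signal]}^{2} \given \curr[\filter]}
    \leq \gbound^{2} + \noisevar.
\end{equation*}
This bounds the fresh-error increment by $\bracks{\noisevar \radius_{\nhd}^{2} + \tfrac{1}{2}(\gbound^{2} + \noisevar)} \curr[\step]^{2} \leq \bracks{\gbound^{2} + (1 + \radius_{\nhd}^{2}) \noisevar} \curr[\step]^{2}$.

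The crucial step \textendash\ and the main obstacle \textendash\ is to replace $\exof{\prev[R] \one_{\prev[\eventalt]}}$ by $\exof{\prev[\tilde R]}$ while simultaneously extracting the cushion $-\eps \probof{\prev[\tilde\eventalt]}$. Using Part 1 together with the definition \eqref{eq:evt-bad}, we have the disjoint decomposition $\preprev[\eventalt] = \prev[\eventalt] \sqcup \prev[\tilde\eventalt]$, so
\begin{equation*}
\prev[R] \one_{\prev[\eventalt]}
    = \prev[R] \one_{\preprev[\eventalt]} - \prev[R] \one_{\prev[\tilde\eventalt]}
    \leq \prev[\tilde R] - \eps \one_{\prev[\tilde\eventalt]},
\end{equation*}
where the last inequality uses the defining property $\prev[R] > \eps$ on $\prev[\tilde\eventalt]$. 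Taking expectations then yields \eqref{eq:err-tot-cond}. The broader difficulty is event bookkeeping: $\curr[\event], \curr[\eventalt], \curr[\tilde\eventalt]$ sit at different indices, and aligning them correctly when passing between conditional expectations requires care. Extracting the excess mass above $\eps$ on the first-exit event $\prev[\tilde\eventalt]$ is precisely the mechanism that will later let one bound the probability that $\curr[R]$ ever exceeds $\eps$ in the global argument of \cref{thm:rate}.
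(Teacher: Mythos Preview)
Your proposal is correct and follows essentially the same route as the paper's proof: Part~1 is dismissed as obvious, Part~2 is the same induction with the same telescoping of \eqref{eq:dist-loc} and the same use of \eqref{eq:eps-bound}, and Part~3 hinges on the identical disjoint decomposition $\preprev[\eventalt] = \prev[\eventalt] \sqcup \prev[\tilde\eventalt]$ together with the $\curr[\filter]$-measurability of $\prev[M]$ and $\one_{\prev[\eventalt]}$ to kill the cross-term and bound the remaining increments via \cref{asm:noise}. Your bound $\exof{\norm{\curr[\signal]}^{2} \given \curr[\filter]} \leq \gbound^{2} + \noisevar$ is in fact slightly sharper than the paper's $2(\gbound^{2} + \noisevar)$ (you exploit the vanishing cross-term rather than the crude $\norm{a+b}^{2} \leq 2\norm{a}^{2} + 2\norm{b}^{2}$), but you then relax it to match the stated constant, so the two arguments coincide.
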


\begin{remark*}
In the above (and what follows), the notation $\one_{A}$ is used to indicate the logical indicator of an event $A\subseteq\samples$, \ie $\one_{A}(\sample) = 1$ if $\sample\in A$ and $\one_{A}(\sample) = 0$ otherwise.
\end{remark*}

\begin{proof}
The first claim is obvious.
For the second, we proceed inductively:
\begin{enumerate}
\item
For the base case $\run=\start$, we have $\init[\event] = \{\init \in \nhd \} \supseteq \{\init \in \nhd_{1} \} = \samples$ because $\init$ is initialized in $\nhd_{1} \subseteq \nhd$.
Since $\eventalt_{0} = \samples$, our claim follows.

\item
For the inductive step, assume that $\prev[\eventalt] \subseteq \curr[\event]$ for some $\run\geq\start$.
To show that $\curr[\eventalt] \subseteq \next[\event]$, fix a realization in $\curr[\eventalt]$ so $\iter[R] \leq \eps$ for all $\runalt=\running,\run$.
Since $\curr[\eventalt] \subseteq \prev[\eventalt]$, the inductive hypothesis posits that $\curr[\event]$ also occurs, \ie $\iter\in\nhd$ for all $\runalt=\running,\run$;
hence, it suffices to show that $\next\in\nhd$.


To that end, given that $\iter\in\nhd\subseteq\cpt$ for all $\runalt=\running\run$, the distance estimate \eqref{eq:dist-loc} readily gives
\begin{equation}
\breg_{\runalt+1}
	\leq \iter[\breg]
	+ \iter[\step] \iter[\snoise]
	+ \tfrac{1}{2} \iter[\step]^{2} \norm{\iter[\signal]}^{2}
	\quad
	\text{for all $\runalt = \running\run$}.
\end{equation}
Therefore, after telescoping, we obtain
\begin{equation}
\next[\breg]
	\leq \init[\breg]
		+ \curr[M]
		+ \curr[S]
	\leq \init[\breg]
		+ \sqrt{\curr[R]}
		+ \curr[R]
	\leq \eps
		+ \sqrt{\eps}
		+ \eps
	= 2\eps + \sqrt{\eps}
\end{equation}
by the inductive hypothesis.
We conclude that $\norm{\next - \sol}^{2} = 2\next[\breg] \leq 4\eps + 2\sqrt{\eps}$, so $\next\in\nhd$ and the induction is complete.
\end{enumerate}

For our third claim, we decompose $\curr[\tilde R]$ as
\begin{align}
\label{eq:err-tot-cond1}
\curr[\tilde R]
	= \curr[R] \one_{\prev[\eventalt]}
	&= \prev[R] \one_{\prev[\eventalt]}
		+ (\curr[R] - \prev[R]) \one_{\prev[\eventalt]}
	\notag\\
	&= \prev[R] \one_{\preprev[\eventalt]}
		- \prev[R] \one_{\prev[\tilde\eventalt]}
		+ (\curr[R] - \prev[R]) \one_{\prev[\eventalt]},
	\notag\\
	&= \prev[\tilde R]
		+ (\curr[R] - \prev[R]) \one_{\prev[\eventalt]}
		- \prev[R] \one_{\prev[\tilde\eventalt]},
\end{align}
where we used the fact that $\prev[\eventalt] = \preprev[\eventalt] \setminus \prev[\tilde\eventalt]$ so $\one_{\prev[\eventalt]} = \one_{\preprev[\eventalt]} - \one_{\prev[\tilde\eventalt]}$ (recall here that $\prev[\eventalt] \subseteq \preprev[\eventalt]$).
Now, to proceed, \eqref{eq:err-tot-upd} yields
\begin{align}
\curr[R] - \prev[R]
		= 2 \prev[M] \curr[\step] \curr[\snoise]
		+ \curr[\step]^{2} \curr[\snoise]^{2}
		+ \tfrac{1}{2} \curr[\step]^{2} \norm{\curr[\signal]}^{2}
\end{align}
so
\begin{subequations}
\begin{align}
\exof{(\curr[R] - \prev[R]) \one_{\prev[\eventalt]}}
	&\label{eq:err-tot-zero}
		= 2 \curr[\step] \exof{\prev[M]\curr[\snoise] \one_{\prev[\eventalt]}}
	\\
	&\label{eq:err-tot-noise}
		+ \curr[\step]^{2} \exof{\curr[\snoise]^{2} \one_{\prev[\eventalt]}}
	\\
	&\label{eq:err-tot-signal}
		+ \tfrac{1}{2} \curr[\step]^{2} \exof{\norm{\curr[\signal]}^{2} \one_{\prev[\eventalt]}}
\end{align}
\end{subequations}
However, since $\prev[\eventalt]$ and $\prev[M]$ are both $\curr[\filter]$-measurable, we have the following estimates:
\begin{enumerate}
\item
For the noise term in \eqref{eq:err-tot-zero}, the second part of \cref{prop:dist-loc} gives:
\begin{equation}
\exof{\prev[M] \curr[\snoise] \one_{\prev[\eventalt]}}
	= \exof{\prev[M] \one_{\prev[\eventalt]} \exof{\curr[\snoise] \given \curr[\filter]}}
	= 0.
\end{equation}

\item
The term \eqref{eq:err-tot-noise} is where the conditioning on $\prev[\eventalt]$ plays the most important role because it allows us to control the distance $\norm{\curr - \sol}$.
Specifically, we have:
\usetagform{comment}
\begin{align}
\exof{\curr[\snoise]^{2} \one_{\prev[\eventalt]}}
	&= \exof{\one_{\prev[\eventalt]} \exof{ \braket{\curr[\noise]}{\curr - \sol}^{2} \given \curr[\filter]} }
	\notag\\
	&\leq \exof{\one_{\prev[\eventalt]} \norm{\curr-\sol}^{2} \exof{\norm{\curr[\noise]}^{2} \given \curr[\filter]}}
	\tag{by Cauchy\textendash Schwarz}
	\\
	&\leq \exof{\one_{\curr[\event]} \norm{\curr-\sol}^{2} \exof{\norm{\curr[\noise]}^{2} \given \curr[\filter]}}
	\tag{because $\prev[\eventalt] \subseteq \curr[\event]$}
	\\
	&\leq \radius_{\nhd}^{2} \noisevar.
	\tag{by \cref{asm:noise}}
\end{align}
\usetagform{default}%

\item
Finally, for the term \eqref{eq:err-tot-signal}, we have:
\begin{align}
\label{eq:err-tot-term1}
\exof{\norm{\curr[\signal]}^{2} \one_{\prev[\eventalt]}}
	&\leq \exof{\norm{\curr[\signal]}^{2}}
	\leq 2 \exof{\norm{\nabla\obj(\curr)}^{2} + \norm{\curr[\noise]}^{2}}
	\leq 2(\gbound^{2} + \noisevar),
\end{align}
with the last step following from \cref{asm:reg,asm:noise}.
\end{enumerate}
Thus, putting together all of the above, we obtain:
\begin{equation}
\exof{(\curr[R] - \prev[R]) \one_{\prev[\eventalt]}}
	\leq \bracks{\gbound^{2} + (1+\radius_{\nhd}^{2})\noisevar} \curr[\step]^{2}
\end{equation}

Going back to \eqref{eq:err-tot-cond1}, we have $\prev[R] > \eps$ if $\prev[\tilde\eventalt]$ occurs, so the last term becomes
\begin{equation}
\label{eq:err-tot-term2}
\exof{\prev[R] \one_{\prev[\tilde\eventalt]}}
	\geq \eps \exof{\one_{\prev[\tilde\eventalt]}}
	= \eps \probof{\prev[\tilde\eventalt]}.
\end{equation}
Our claim then follows by combining \cref{eq:err-tot-cond1,eq:err-tot-term1,eq:err-tot-term2}.
\end{proof}

\para{Controlling the probability of escape}

\Cref{lem:events} is the technical key to show that $\curr$ remains close to $\sol$ with high probability;
we formalize this in a final intermediate result below.

\begin{proposition}
\label{prop:contain}
Fix some tolerance level $\delta>0$.
If \cref{asm:noise} holds and \eqref{eq:SGD} is run with a step-size schedule of the form $\curr[\step] = \step/(\run + \offset)^{\pexp}$ for some sufficiently large $\offset > 0$, we have
\begin{equation}
\label{eq:contain}
\probof{\curr[\eventalt]}
	\geq 1-\delta
	\quad
	\text{for all $\run=\running$}
\end{equation}
\end{proposition}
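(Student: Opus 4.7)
The plan is to telescope the one-step estimate~\eqref{eq:err-tot-cond} provided by Lemma~\ref{lem:events} and convert the resulting bound into a uniform control on $\probof{\comp{\curr[\eventalt]}}$ via the ``first-passage'' decomposition of $\comp{\curr[\eventalt]}$. Setting $C_{\nhd} \coloneqq \gbound^{2} + (1+\radius_{\nhd}^{2})\noisevar$ for the constant in~\eqref{eq:err-tot-cond}, I would rearrange the estimate as
\[
\eps\,\probof{\prev[\tilde\eventalt]}
	\leq \exof{\prev[\tilde R]} - \exof{\curr[\tilde R]} + C_{\nhd}\, \curr[\step]^{2}
	\qquad (\run = \running),
\]
and then sum from $\run=1$ to $N$. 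The boundary conditions $\tilde R_{0} = 0$, $\tilde\eventalt_{0} = \varnothing$, together with the non-negativity $\exof{\tilde R_{N}} \geq 0$, make the middle terms telescope, producing
\[
\eps \sum_{\runalt=1}^{N-1} \probof{\tilde\eventalt_{\runalt}}
	\leq C_{\nhd} \sum_{\run=1}^{N} \curr[\step]^{2}.
\]

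Next, I would observe that the events $\{\tilde\eventalt_{\runalt}\}_{\runalt\geq\start}$ are pairwise disjoint and that $\comp{\curr[\eventalt]} = \bigcup_{\runalt=\start}^{\run} \tilde\eventalt_{\runalt}$: indeed, $\curr[\eventalt]$ fails exactly when the threshold $\eps$ is crossed by $\iter[R]$ for some smallest index $\runalt \leq \run$. Hence $\probof{\comp{\curr[\eventalt]}} = \sum_{\runalt=1}^{\run} \probof{\tilde\eventalt_{\runalt}}$, and letting $N \to \infty$ in the inequality above yields the uniform bound
\[
\probof{\comp{\curr[\eventalt]}}
	\leq \frac{C_{\nhd}}{\eps} \sum_{\runalt=1}^{\infty} \iter[\step]^{2}
	\qquad \text{for every } \run \geq \start.
\]

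It remains to make the right-hand side smaller than $\delta$ by tuning the offset. Focusing (as announced in the paper) on the regime $\pexp > 1/2$, an integral comparison gives $\sum_{\runalt=1}^{\infty} \step^{2}/(\runalt+\offset)^{2\pexp} \leq \step^{2}/[(2\pexp-1)\offset^{2\pexp-1}]$, which vanishes as $\offset \to \infty$. Choosing $\offset$ large enough that $C_{\nhd}\, \step^{2}/[\eps(2\pexp-1)\offset^{2\pexp-1}] \leq \delta$ delivers $\probof{\curr[\eventalt]} \geq 1-\delta$ uniformly in $\run$, as required. The complementary case $\pexp \in (2/(\qexp+2),1/2]$ (in which $\sum \iter[\step]^{2}$ diverges when $\qexp=2$) would be handled by revisiting the estimate of $\exof{\curr[\snoise]^{2}\one_{\prev[\eventalt]}}$ in the proof of Lemma~\ref{lem:events} with the Hölder-type splitting used for term $\curr[C]$ in the proof of Lemma~\ref{lem:subsequence}, which restores summability through the exponent $\pexp(\qexp+2)/2 > 1$.

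The main conceptual obstacle has in fact already been overcome by Lemma~\ref{lem:events}, so the argument above is essentially mechanical. A na\"ive attempt to bound the martingale $\curr[M]$ directly would fail because $\exof{\norm{\curr[\step]\curr[\snoise]}^{2} \given \curr[\filter]}$ scales like $\curr[\step]^{2}\norm{\curr-\sol}^{2}\noisevar$, which is uncontrolled unless $\curr$ stays close to $\sol$ --- precisely what we are trying to prove. Lemma~\ref{lem:events} sidesteps this circularity by absorbing the indicator $\one_{\prev[\eventalt]}$ into $\curr[\tilde R]$ and invoking the inclusion $\prev[\eventalt] \subseteq \curr[\event]$ (which rests on the calibration of $\nhd$ in~\eqref{eq:eps-bound}) to convert the would-be random quantity $\norm{\curr-\sol}$ into a deterministic upper bound $\radius_{\nhd}$. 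With this variance control in hand, all that the proof of Proposition~\ref{prop:contain} needs is the clean telescoping-plus-summability step outlined above.
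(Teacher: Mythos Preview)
Your proposal is correct and follows essentially the same route as the paper's proof: telescope the estimate~\eqref{eq:err-tot-cond} from \cref{lem:events}, use the disjoint decomposition $\comp{\curr[\eventalt]} = \bigcup_{\runalt=\start}^{\run}\tilde\eventalt_{\runalt}$, and then make the summable tail $\sum_{\runalt}\iter[\step]^{2}$ smaller than $\eps\delta/C_{\nhd}$ by enlarging $\offset$. The only cosmetic difference is that the paper recovers the endpoint term $\probof{\tilde\eventalt_{\run}}$ via the Markov-type bound $\probof{\tilde\eventalt_{\run}}\leq\exof{\tilde R_{\run}}/\eps$, whereas you simply drop the non-negative $\exof{\tilde R_{N}}$ and let $N\to\infty$; both yield the same inequality $\sum_{\runalt}\probof{\tilde\eventalt_{\runalt}}\leq (C_{\nhd}/\eps)\sum_{\runalt}\iter[\step]^{2}$.
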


\begin{proof}
We begin by bounding the probability of the ``large noise'' event $\curr[\tilde\eventalt] = \prev[\eventalt] \setminus \curr[\eventalt]$ as follows:
\begin{align}
\label{eq:prob-large1}
\probof{\curr[\tilde\eventalt]}
	&= \probof{\prev[\eventalt] \setminus \curr[\eventalt]}
	= \probof{\prev[\eventalt] \cap \{\curr[R] > \eps\}}
	\notag\\
	&= \exof{\one_{\prev[\eventalt]} \times \oneof{\curr[R] > \eps}}
	\notag\\
	&\leq \exof{\one_{\prev[\eventalt]} \times (\curr[R] / \eps)}
	\notag\\
	&= \exof{\curr[\tilde R]} / \eps
\end{align}
where, in the second-to-last line, we used the fact that $\curr[R] \geq 0$ (so $\oneof{\curr[R]>\eps} \leq \curr[R]/\eps$).
Now, by telescoping \eqref{eq:err-tot-cond}, we get
\begin{equation}
\label{eq:prob-large2}
\exof{\curr[\tilde R]}
	\leq \exof{\tilde R_{0}}
		+ \rconst \sum_{\runalt=\start}^{\run} \iter[\step]^{2}
		- \eps \sum_{\runalt=\start}^{\run} \probof{\preiter[\tilde\eventalt]}
\end{equation}
where we set $\rconst = \gbound^{2} + (1+\radius_{\nhd}^{2})\noisevar$.
Hence, combining \eqref{eq:prob-large1} and \eqref{eq:prob-large2}, we obtain the estimate
\begin{equation}
\sum_{\runalt=\start}^{\run} \probof{\iter[\tilde\eventalt]}
	\leq \frac{\rconst}{\eps} \sum_{\runalt=\start}^{\run} \iter[\step]^{2}
	\leq \frac{\rconst\Gamma}{\eps},
\end{equation}
where
we set $\Gamma = \sum_{\run=\start}^{\infty} \curr[\step]^{2} = \step^{2} \sum_{\run=\start}^{\infty} (\run+\offset)^{-2\pexp}$
and we used the fact that $\tilde R_{0} = 0$ and $\tilde\eventalt_{0} = \varnothing$ (by convention).

By choosing $\offset$ sufficiently large, we can ensure that $\rconst\Gamma/\eps < \delta$;
moreover, since the events $\iter[\tilde\eventalt]$ are disjoint for all $\runalt=\running$, we get
\begin{equation}
\probof*{\union_{\runalt=\start}^{\run} \iter[\tilde\eventalt]}
	= \sum_{\runalt=\start}^{\run} \probof{\iter[\tilde\eventalt]}
	\leq \delta
\end{equation}
and hence:
\begin{equation}
\probof{\curr[\eventalt]}
	= \probof*{\intersect_{\runalt=\start}^{\run} \comp{\iter[\tilde\eventalt]}}
	\geq 1 - \delta,
\end{equation}
as claimed.
\end{proof}

\para{Putting everything together}

We are finally in a position to combine all of the ingredients for the proof of \cref{thm:rate}.

\begin{proof}[Proof of \cref{thm:rate}]
To begin, define $\nhd$ and $\nhd_{1}$ as in \cref{lem:events}.
Then, by construction, we have:
\begin{equation}
\event_{\nhd}
	\equiv \{\curr\in\nhd\;\text{for all $\run=\running$}\}
	= \intersect_{\run=\start}^{\infty} \curr[\event].
\end{equation}
Since the sequence $\curr[\event]$ is decreasing and $\curr[\event] \supseteq \prev[\eventalt]$ (by the second part of \cref{lem:events}), \cref{prop:contain} yields
\begin{equation}
\probof{\event_{\nhd}}
	= \inf\nolimits_{\run} \probof{\curr[\event]}
	\geq \inf\nolimits_{\run} \probof{\prev[\eventalt]}
	\geq 1-\delta,
\end{equation}
provided that $\offset$ is chosen large enough.
This proves the first part of the theorem, \ie to the effect that $\curr$ remains close to $\sol$ with probability at least $1-\delta$.

For the second part of the theorem, \cref{prop:dist-loc} readily gives
\begin{equation}
\next[\breg] \one_{\curr[\event]}
	\leq (1-2\qmin\curr[\step]) \curr[\breg] \one_{\curr[\event]}
		+ \bracks{\curr[\step] \curr[\snoise] + \tfrac{1}{2} \curr[\step]^{2} \norm{\curr[\signal]}^{2}} \one_{\curr[\event]}.
\end{equation}
Now, for any given $\step$, we can choose $\offset$ sufficiently large so that $\inf_{\run}(1-2\qmin\curr[\step]) > 0$ and $\probof{\event_{\nhd}} \geq 1-\delta$ (the latter by \cref{prop:contain}).
Moreover, working as in the proof of \cref{lem:events}, we get
\begin{align}
\exof*{\parens{\curr[\step] \curr[\snoise] + \tfrac{1}{2} \norm{\curr[\signal]}^{2}} \one_{\curr[\event]}}
	&= \exof*{\one_{\curr[\event]}
		\exof*{\curr[\step] \curr[\snoise] + \tfrac{1}{2} \curr[\step]^{2} \norm{\curr[\signal]}^{2} \given \curr[\filter]}}
	\notag\\
	&\leq \curr[\step]^{2} (\gbound^{2} + \noisevar).
\end{align}
Then, letting $\bar\breg_{\run} = \exof{\curr[\breg] \one_{\curr[\event]}} \geq 0$ and recalling that $\next[\event] \subseteq \curr[\event]$ (so $\one_{\next[\event]} \leq \one_{\curr[\event]}$), the two estimates above yield
\begin{align}
\bar\breg_{\run+1}
	\leq \exof{\next[\breg] \one_{\curr[\event]}}
	&\leq (1-2\qmin\curr[\step]) \bar\breg_{\run}
		+ (\gbound^{2} + \noisevar) \curr[\step]^{2}
	\notag\\
	&\leq \bracks*{1 - \frac{2\qmin\step}{(\run+\offset)^{\pexp}}} \bar\breg_{\run}
		+ \frac{(\gbound^{2} + \noisevar)\step^{2}}{(\run+\offset)^{2\pexp}}.
\end{align}
Thus, by \cref{lem:Chung}, we obtain the bounds:
\begin{subequations}
\begin{alignat}{2}
\label{eq:Chung-small}
\bar\breg_{\run}
	&\leq \frac{\gbound^{2} + \noisevar}{2\qmin} \frac{\step}{\run^{\pexp}}
		+ o\parens*{\frac{1}{\run^{\pexp}}}
		&\qquad
		&\text{if $\pexp<1$},
\shortintertext{and}
\label{eq:Chung-large}
\bar\breg_{\run}
	&\leq \frac{\gbound^{2} + \noisevar}{2\qmin\step - 1} \frac{\step^{2}}{\run}
		+ o\parens*{\frac{1}{\run^{\pexp}}}
		&\qquad
		&\text{if $\pexp=1$},
\end{alignat}
\end{subequations}
provided that $2\qmin\step > 1$ for the latter.
The claim of the theorem then follows by noting that
\begin{equation}
\exof{\norm{\curr - \sol}^{2} \given \event_{\nhd}}
	\leq \frac{\exof{\norm{\curr - \sol}^{2} \one_{\event_{\nhd}}}}{\probof{\event_{\nhd}}}
	\leq \frac{2}{1-\delta} \bar\breg_{\run}
\end{equation}
and applying \cref{eq:Chung-small,eq:Chung-large}.
\end{proof}


\begin{figure*}[tbp]
\centering
\footnotesize
\begin{subfigure}{.45\linewidth}
\centering
\includegraphics[height=5cm]{Figures/train_loss_cutoff_math.pdf}%
\caption{Training loss}
\label{fig:ResNet-100-train-loss}
\end{subfigure}
\quad
\begin{subfigure}{.45\linewidth}
\centering
\includegraphics[height=5cm]{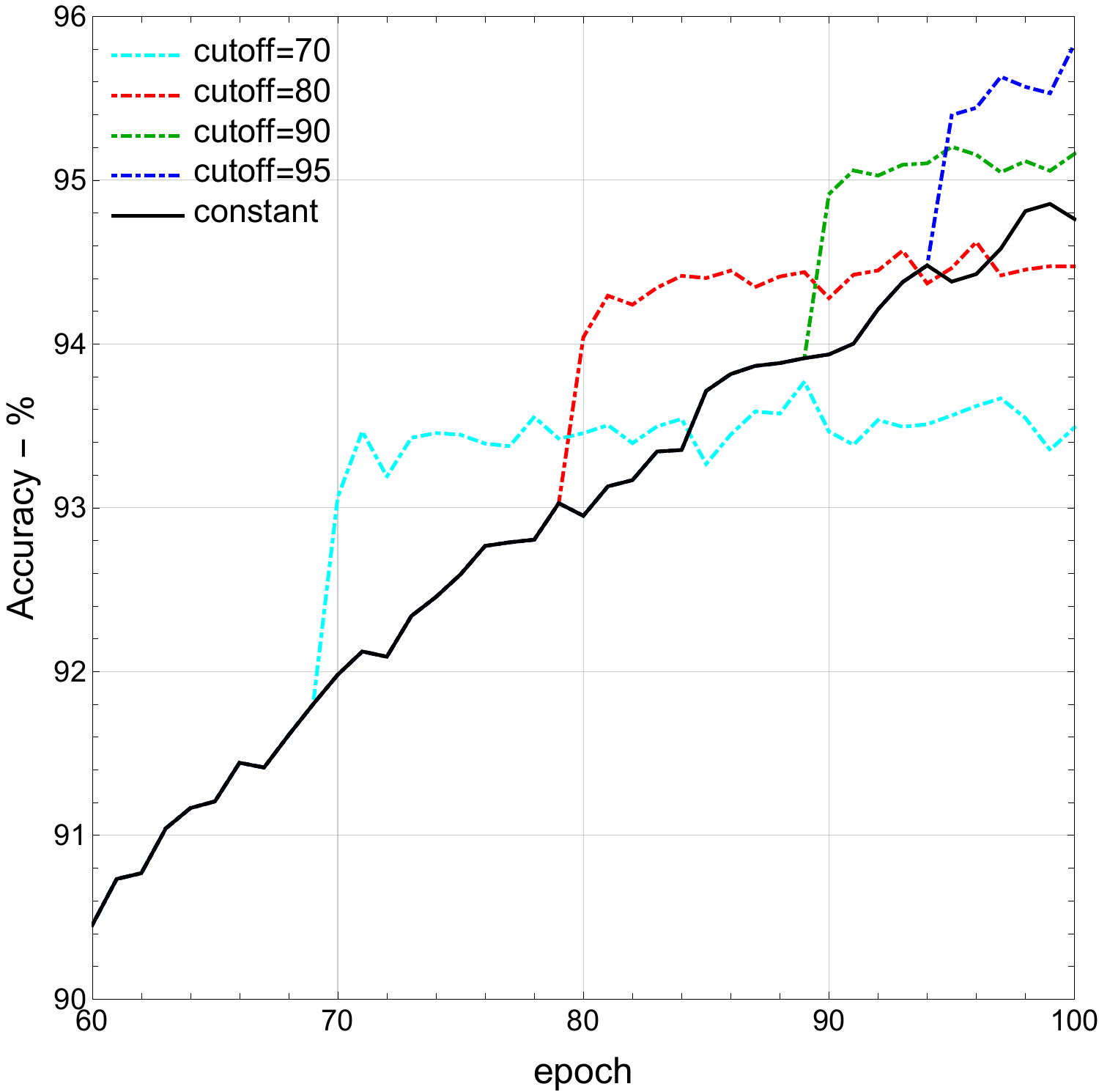}%
\caption{Training accuracy}
\label{fig:ResNet-100-train-acc}
\end{subfigure}
\begin{subfigure}{.45\linewidth}
\centering
\includegraphics[height=5cm]{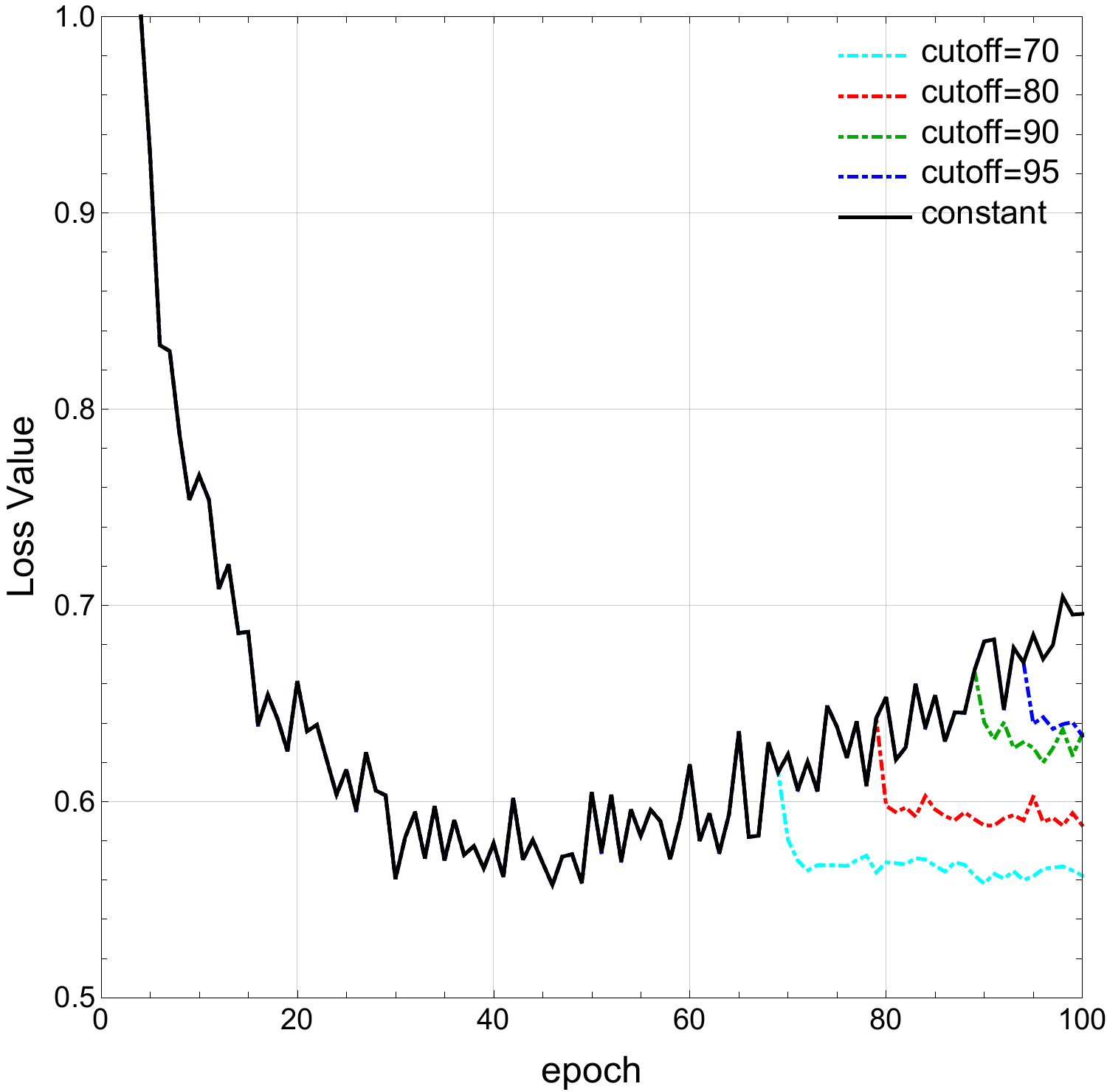}%
\caption{Test loss}
\label{fig:ResNet-100-test-loss}
\end{subfigure}
\quad
\begin{subfigure}{.45\linewidth}
\centering
\includegraphics[height=5cm]{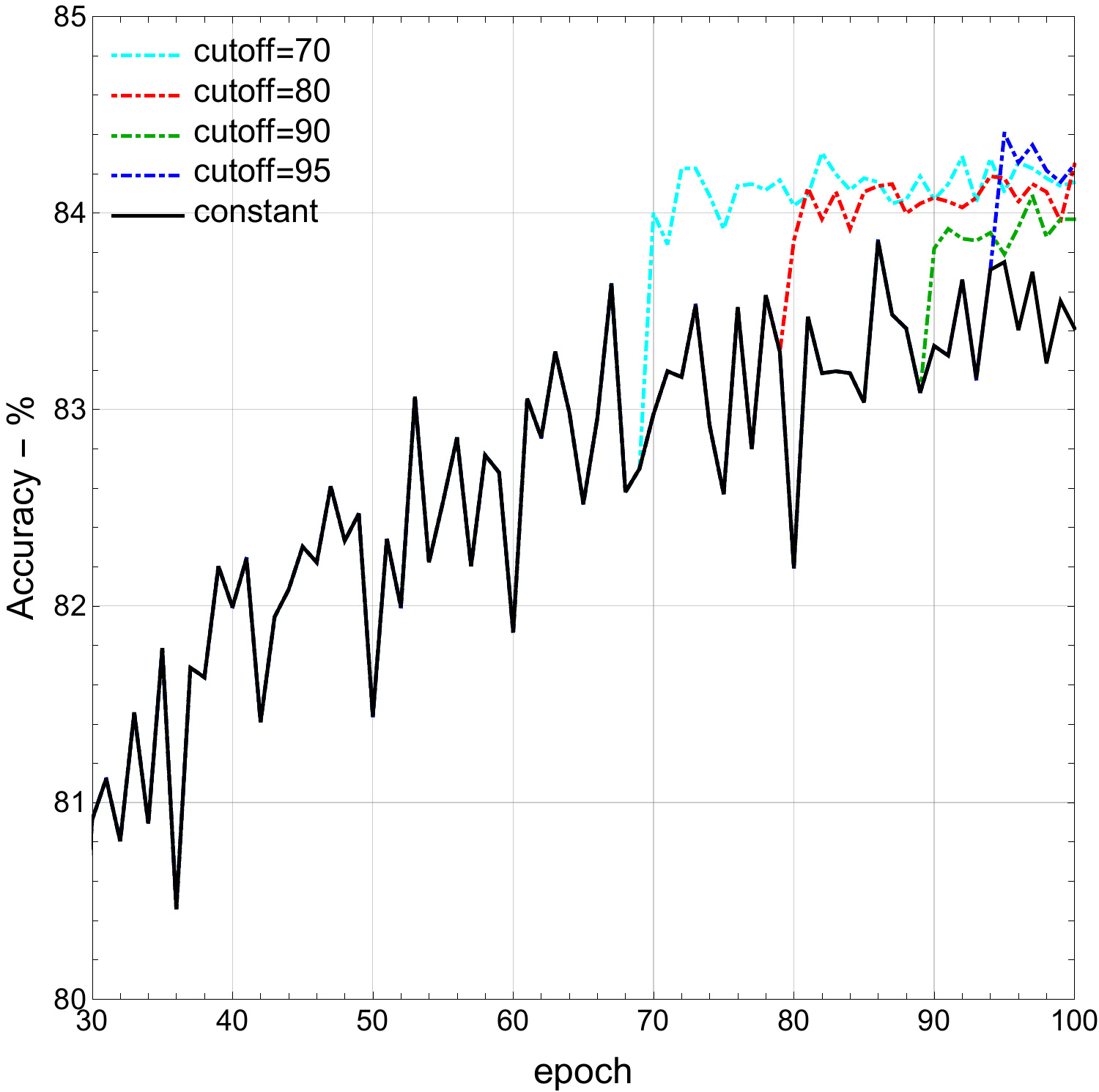}%
\caption{Test accuracy}
\label{fig:ResNet-100-test-acc}
\end{subfigure}
\caption{Results for training ResNet18 model for classification over CIFAR10 dataset, with cooldown heuristic. Constant step-size SGD is run for 100 epochs and cooldown phase starts at epochs 70, 80, 90 and 95, with diminishing step-size policy of $1/n$.}
\label{fig:ResNet-app}
\end{figure*}


\section{Numerical experiments}
\label{app:numerics}

In this appendix, we present some more details on our ResNet training setup and some additional numerical results.
We used the \emph{python/pytorch} implementation of Resnet18 from the \emph{torchvision} package and, for consistency, we downloaded the CIFAR10 dataset from the same package.
For training/evaluation purposes, we used the the standard training/test split of 50000/10000 examples, with training and test batches of size 120.

The purposes of our experiments is to demonstrate the possible benefits of the ``cooldown'' heuristic that is derived from our convergence analysis in \cref{sec:analysis}.
To that end, we initially trained the model with constant step-size \eqref{eq:SGD} whose step-size is picked through grid-search over the set $\{1, 10^{-1}, 10^{-2}, 10^{-3}, 10^{-4}\}$.
We then took checkpoints of the model at certain epochs and launched the cooldown heuristic from such points with a $1/n$ step-size policy.
It is important to emphasize that the iteration counter $n$ starts at the first iteration of cooldown phase so that we have a ``continuous'' sequence of step-sizes across epochs.
In \cref{fig:ResNet-app}, we provide the complementary plots for the setting described in Section \ref{sec:numerics}, again exhibiting a clear benefit (especially in test accuracy and loss) when using the cool-down heuristic for the last part of the experiment's runtime budget.

\bibliographystyle{plainnat}
\bibliography{IEEEabrv,bibtex/Bibliography}

\end{document}